\documentclass[11pt]{article}
\usepackage{amsmath,amssymb,amsthm}
\usepackage{graphicx}
\graphicspath{{./}{figures/}}
\usepackage[margin=1in]{geometry}
\usepackage{xcolor}
\usepackage[hidelinks]{hyperref}
\usepackage{mathtools}
\usepackage{tikz}
\usetikzlibrary{babel}

\newcommand{\D}{\mathbb D}
\newcommand{\C}{\mathbb C}

\newcommand{\T}{\mathbb T}

\newcommand{\Log}{\mathrm{Log}}
\newcommand{\Bee}{\mathcal B}

\newcommand{\M}{\mathcal M}
\newcommand{\cP}{\mathcal P}
\newcommand{\cS}{\mathcal S}
\newcommand{\cD}{\mathcal D}
\newcommand{\cA}{\mathcal A}
\newcommand{\cR}{\mathcal R}
\newcommand{\K}{\mathcal K}

\DeclareMathOperator{\Arg}{Arg}
\newcommand{\cl}{\overline}

\DeclareMathOperator{\Rea}{Re}
\DeclareMathOperator{\Ima}{Im}
\DeclareMathOperator{\dist}{dist}
\DeclareMathOperator{\wind}{wind}
\DeclareMathOperator{\interior}{int}
\DeclareMathOperator{\ext}{ext}

\DeclareMathOperator{\Int}{int}

\newtheorem{theorem}{Theorem}[section]
\newtheorem{lemma}[theorem]{Lemma}
\newtheorem{proposition}[theorem]{Proposition}
\newtheorem{corollary}[theorem]{Corollary}
\newtheorem{definition}[theorem]{Definition}
\theoremstyle{remark}
\newtheorem{remark}[theorem]{Remark}

\title{Smale's mean value constant and admissible univalent Blaschke pairs}
\author{Aneesh Jatar\and Tuen-Wai Ng}

\date{}
\begin{document}
\maketitle
\begin{center}
Department of Mathematics\\
The University of Hong Kong\\
Pokfulam Road, Hong Kong SAR\\[1em]
E-mail: \texttt{aneesh.jatar1@gmail.com}, \texttt{ntw@maths.hku.hk}
\end{center}
\begin{abstract}
Let $\mathcal{P}$ be the class of polynomials $P$ of degree at least $2$ with
$P(0)=0$, $P'(0)=1$, let $S(P)=\min_{P'(c)=0}|P(c)/c|$ and let
$\mathcal{M}=\sup_{P\in\mathcal{P}}S(P)$ be Smale's mean value constant. Smale proved that
$\M\le4$, observed that $\M\ge 1$, and asked whether $\M=1$. We show that $\mathcal{M}$ coincides with
$$
\mathcal{B}=\sup\frac1{B(0)},
$$
the supremum running over pairs $(W,B)$ in which $W$ is a normalised
univalent map of the standard unit disk $\D$ and $B$ is a finite Blaschke product with
$B(0)>0$ such that $WB$ is locally univalent in $\mathbb{D}$. Smale's question is
therefore a problem in univalent function theory. We also introduce the degree-restricted constants $\mathcal{B}_d$ for $B$ with $\deg B \le d$ and show that
$$1.1018586788\ldots=\sqrt{\frac{8081}{6656}} \le \mathcal{B}_1 <
\sqrt{\frac{7}{5}}=1.1832159566\ldots$$
This implies that $\mathcal{M}>1.1018$ and we also show that
there are polynomials of arbitrarily large degree with $S(P)>1.1018$. 
The polynomial construction behind these results also settles Problems 1A and 1B of Smale's 1981 paper. For $P\in\mathcal P$ and an integer $k\ge2$ let $S_k(P)=\min_{P'(c)=0}|P^{(k)}(0)/k!|^{1/(k-1)}|P(c)|$. Smale proved, by L\"owner's coefficient estimate for inverses of univalent functions, that $\sup_{P\in\mathcal P}S_k(P)\le C_k^{1/(k-1)}<4$, where $C_k$ is the $k$-th Catalan number and he asked whether the constant $4$ (Problem 1A), or the constant $2=C_2$ for $k=2$ (Problem 1B), can be reduced. We prove that $\sup_{P\in\mathcal P}S_k(P)=C_k^{1/(k-1)}$ for every $k\ge2$. Hence the constants $4$ and $2$ cannot be reduced and the answers to both problems are negative.

\end{abstract}

\noindent\textbf{Mathematics Subject Classification (2020):} 30C10, 30C55.

\section{Setting and results}

\subsection{Smale's constant}\label{sec:smale}

As a byproduct of Smale's study of the complexity of Newton's method \cite{Smale}, he introduced and studied the following constants.

Let $\cP_d$ be the class of polynomials $P$ of degree $d\ge2$ with
$P(0)=0$, $P'(0)=1$, and $\cP=\bigcup_{d\ge2}\cP_d$. For $P\in\cP$ put
$q(w)=P(w)/w$, a polynomial with $q(0)=1$ and $\deg q=\deg P-1\ge1$.
Smale's quantity is
$$
S(P):=\min_{P'(c)=0}\Bigl|\frac{P(c)}{c}\Bigr|=\min_{P'(c)=0}|q(c)| ,
\qquad
\M_d:=\sup_{P\in\cP_d}S(P),\qquad \M:=\sup_{P\in\cP}S(P).
$$
(The origin is not a critical point, since $P'(0)=1$.) The quantity
$S(P)$ is invariant under $P(w)\mapsto\lambda^{-1}P(\lambda w)$. Smale
\cite{Smale} proved $S(P)\le4$ for all $P\in\cP$, i.e.\ $\M\le4$. He also noticed that $P(w)=w-w^d/d$ gives $S(P)=(d-1)/d$, and asked whether $\M=1$ or
even $\M_d=(d-1)/d$. This is Smale's mean value conjecture, one of the
three minor problems in his problem list for the next century \cite{Smale2000}.
The conjecture is true for $d\le4$ and for special classes of
polynomials; the degree-dependent refinements
$\M_d\le4^{(d-2)/(d-1)}$ \cite{BMN} and $\M_d\le4-c/\sqrt d$
\cite{Crane} tend to $4$ as $d\to\infty$. So $\M\le4$ is still the best
known upper bound for $\M$ itself. See \cite{Crane,NgZhang,JN26, FLW} for
surveys of the literature and the latest results.

In \cite{Lean} a pre-release version of GPT-6 Astra produced a
machine-checked (Lean~4/Mathlib) proof that the constant-$1$ form of the
conjecture is false: {\it there is a polynomial $P$ of large, unspecified
degree with $S(P)>1$}. In \cite{Lean}, the
counterexample is built in four steps: i) a locally univalent function of
the form $(z-z_0)B_r(W(z))/B_r(W(z_0))$ is constructed on the quartic
lemniscate $K=\{|z^4+64i|\le\sqrt{4097}\}$, where $W:K\to\cl\D$ is a
conformal map and $B_r$ is a Blaschke factor; ii) it is approximated by a
polynomial $f_0$; and iii) the primitive of a high even power of the quartic $q(z)=z^4+64i$ is added to $f_0$, which forces all critical points of the polynomial $P_n=f_0 + \int q^{2n}$ into a thin collar around
$\partial K$; iv) rescale $P_n$ to get a polynomial $P \in \cP$ so that $S(P)>1$. The same argument can be further developed to give for example $S(P)>1+2^{-35}$, 
but it is hard to push this approach to reach
a lower bound like $1.1018$.

Motivated and based upon the GPT-6 Astra construction, in the present paper, we give two reformulations of Smale's mean value problem by introducing two new constants $\cA$ and $\Bee$ which arise (respectively) from two (equivalent) extremal problems for pairs of the form $(f,\Omega)$ and $(W,B)$, where $f(0) = 0, f'(0) = 1$ is locally univalent in a neighbourhood of $\overline{\Omega}$ and $\Omega$ is a bounded, analytic Jordan domain containing the origin, while $W$ is a univalent function on the unit disk fixing the origin, and $B$ is a finite Blaschke product for which $WB$ is locally univalent on the unit disk. We show that $\cA=\M=\Bee$. The definitions of $\cA$ and $\Bee$ will be given in Section \ref{sec:AM} and Section \ref{sec:WB} respectively.  

Smale's paper \cite{Smale} also contains a second family of extremal problems for critical values, which arises from the Taylor expansion of $P$ at the current point of the Newton iteration. For $P\in\cP$ and an integer $k\ge2$ let
$$
S_k(P):=\min_{P'(c)=0}\Bigl|\frac{P^{(k)}(0)}{k!}\Bigr|^{\frac1{k-1}}|P(c)|,\qquad
\K_{d,k}:=\sup_{P\in\cP_d}S_k(P),\qquad
\K_k:=\sup_{P\in\cP}S_k(P),\qquad
\K:=\sup_{k\ge2}\K_k
$$
(see Section~\ref{sec:smale1A} for the equivalent unnormalized form). Using L\"owner's theorem on the coefficients of inverses of univalent functions, Smale proved that $\K_k\le C_k^{1/(k-1)}<4$, where $C_k=\frac{(2k)!}{k!\,(k+1)!}$ is the $k$-th Catalan number, i.e.\ the modulus of the $k$-th Taylor coefficient of the inverse Koebe function; hence $\K\le4$. He observed that $\K\ge1$ and asked in Problem 1A of \cite{Smale} whether the constant $4$ can be reduced, possibly to $1$, and in Problem 1B whether the constant $2=C_2$ for $k=2$ can be reduced to $\frac{d-1}{2d}$ for polynomials of degree $d$. For the top coefficient $k=d$, Dubinin \cite{Dubinin2006} proved the sharp result $\K_{d,d}=(d-1)d^{-d/(d-1)}$, so that Smale's example $w-w^d/d$ is extremal for $k=d$, while the lower bound of $\frac{d-1}{2d}$ for $K_{d,2}$ is known to be false for infinitely many degrees $d$ (see the thesis of Choi \cite{choi} and also Cheung \cite{cheung2011}). On the other hand, despite the bound of $\frac{d-1}{2d}$ being false for infinitely many degrees, it is not known whether the constant $2$ can be reduced at all. We will refer to this more general question again as ``Problem 1B'' in the remainder of this paper. We observe that the Runge--Hilbert construction which gives $\cA\le\M$ (Proposition~\ref{prop:completion}) works equally well for the quantities $S_k$ and, when applied to the inverse Koebe function, produces the reverse inequality $\K_k\ge C_k^{1/(k-1)}$. Thus $\K_k=C_k^{1/(k-1)}$ for every $k\ge2$, $\K_2=2$ and $\K=4$. In conclusion, Smale's constants $4$ and $2$ cannot be reduced, and Problems 1A and 1B are answered in the negative.

\subsection{Admissible univalent Blaschke pairs}\label{sec:WB}

Write $\D=\{z\in\C:|z|<1\}$, $\T=\partial\D$,
$D(w,\rho)=\{\zeta:|\zeta-w|<\rho\}$, and let $\cS$ denote the class of
functions $W$ holomorphic and univalent in $\D$ with $W(0)=0$ and $W'(0)=1$.

A \emph{finite Blaschke product} is a function
$$
B(z)=\lambda\prod_{k=1}^{n}\frac{z-a_k}{1-\cl{a_k}z},
\qquad |\lambda|=1,\quad a_k\in\D,\quad n\ge 0 ,
$$
zeros being repeated according to multiplicity and $n=\deg B$. We shall
use the following standard facts: $B$ is holomorphic on a neighbourhood
of $\cl\D$ (its poles $1/\cl{a_k}$ lie outside $\cl\D$), $|B|=1$ on
$\T$, and $|B|<1$ in $\D$ if $n\ge1$. If no $a_k$ is $0$, the
normalization $B(0)>0$ fixes the unimodular constant, namely
$\lambda=\prod_k(-\cl{a_k}/|a_k|)$, and then
\begin{equation}\label{eq:B0}
B(0)=\prod_{k=1}^{n}|a_k| .
\end{equation}

A pair consisting of a univalent function $W$ and a finite Blaschke product $B$ is called a \emph{univalent Blaschke pair} or simply a \emph{pair} $(W,B)$. To reformulate Smale's mean value constant $\M$, we also introduce the following
\begin{definition}\label{def:admissible}
A pair $(W,B)$ is (normalized) \emph{admissible} if
\begin{enumerate}
\item $W\in\cS$;
\item $B$ is a finite Blaschke product with $B(0)>0$;
\item $F:=WB$ satisfies $F'(z)\ne0$ for all $z\in\D$.
\end{enumerate}
The constant $B\equiv1$ is allowed. We set
$$
\Bee:=\sup\Bigl\{\frac{1}{B(0)}:(W,B)\ \text{admissible}\Bigr\},
\qquad
\Bee_d:=\sup\Bigl\{\frac{1}{B(0)}:(W,B)\ \text{admissible},\ \deg B\le d\Bigr\}.
$$
An admissible pair is \emph{regular} if $W$ extends to a univalent
function on a neighbourhood of $\cl\D$ and $F'\ne0$ on $\cl\D$.
\end{definition}

Since $(W,1)$ is admissible for every $W\in\cS$, $\Bee\ge\Bee_d\ge 1$.
Note that $B(0)\neq 0$ is forced by condition 3 because a zero of $B$ at the
origin would make $F$ vanish to order $\ge2$ there, so $F'(0)=0$.
For the same reason all zeros of $B$ are simple.
Observe also that $F'(0)=W'(0)B(0)=B(0)$. The normalizations $W'(0)=1$
and $B(0)>0$ are harmless: for any univalent $W$ with $W(0)=0$ and any
finite Blaschke product $B$ with $(WB)'\ne0$ on $\D$, the pair
$(W/W'(0),\,B\,|B(0)|/B(0))$ is admissible with the same value
$1/|B(0)|$. We will thus refer to any pair $(W,B)$ with $W(0) = 0$ univalent on $\mathbb{D}$, $B$ a finite Blaschke product, such that $WB$ is locally univalent on $\mathbb{D}$ as \textbf{admissible} in the sequel.

Throughout we use the notation
$$
F=WB,\qquad g=\frac{zW'}{W},\qquad \Phi=\frac{zF'}{F}=g+\frac{zB'}{B}.
$$
Since $W(z)=z+O(z^2)$ and $W\neq0$ on $\D\setminus\{0\}$, the function
$g$ is holomorphic on $\D$ with $g(0)=1$; $\Phi$ is meromorphic on $\D$,
$\Phi(0)=1$, and its poles in $\D$ are exactly the distinct zeros of $B$
(all simple). Moreover, $\Phi$ has no zeros in $\D$. 

\subsection{Main results}

\begin{theorem}\label{thm:main}
$\displaystyle \M\ =\ \Bee .$
\end{theorem}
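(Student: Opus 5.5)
We prove the two inequalities $\M\le\Bee$ and $\Bee\le\M$ separately, passing through the intermediate constant $\cA$ of Section~\ref{sec:AM}. Roughly, $\cA$ is the supremum, over pairs $(f,\Omega)$ with $f(0)=0$, $f'(0)=1$ locally univalent near $\cl\Omega$, of $\min_{\partial\Omega}|f|$ divided by the conformal radius-type normalisation of $\Omega$.

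\emph{Step 1: $\M\le\Bee$.} Take $P\in\cP$. Let $U$ be the connected component containing $0$ of
$$\{w:|P(w)|<S(P)\,\rho(w)\},$$
or more simply the component of $P^{-1}(D(0,r))$ containing $0$ for a suitable $r$. A cleaner choice is the largest disk-like region around $0$ on which a branch of $P^{-1}$ is defined. Concretely, let $V$ be the component of $P^{-1}(D(0,R))$ containing $0$, where $R=\min_{P'(c)=0}|P(c)|$. Then $P:V\to D(0,R)$ is a proper map without critical points, hence a conformal bijection; this is Smale's argument. It does not directly yield the $|q(c)|$ normalisation, so instead we argue as follows. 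Take $\Omega$ to be the component of $\{|P|<\epsilon|\cdot|\}$-type sublevel sets, or more naturally work with $q=P/w$. Consider the Jordan domain $\Omega$ bounded by a level curve, choose a Riemann map $W:\D\to\Omega$ with $W(0)=0$, and set $F=P\circ W$. On $\T$ we have $|F|$ controlled below by critical values. The inner–outer factorisation $F=W\cdot B\cdot(\text{outer})$ should then be arranged so that the outer factor is absorbed: replace $W$ by the univalent map $W\cdot e^{h}$? That map is not univalent in general. The intended route is instead: $P$ maps $\Omega$ properly onto a disk with $P'\ne0$ there, so $P\circ W$ is a constant times a Blaschke product $B$. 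Rescaling, $W$ is univalent, $P\circ W=W\cdot B/B(0)$-type identities fail; rather $\rho B=P\circ W$ with $\rho B'(0)\ne0$. Composing with $P^{-1}$ near $0$ then gives a univalent $\tilde W=P^{-1}$-branch and a locally univalent product. I expect the paper to route this step through Proposition-level statements $\M\le\cA\le\Bee$. For $\cA\le\Bee$, given $(f,\Omega)$ let $W:\D\to\Omega$ be Riemann, so $f\circ W$ is locally univalent. Since $|f\circ W|\ge m$ on $\T$ and $f\circ W$ has finitely many zeros, we divide out a Blaschke product $B$ of its zeros other than $0$ (together with $z$). The quotient is zero-free, and its modulus on $\T$ compares to $m$ while its value at $0$ involves $B(0)$ and $f'(0)W'(0)$. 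The harmonic minimum principle applied to $\log|f\circ W/(zB)|$ then gives $1/B(0)\ge m/W'(0)$-type bounds, which is exactly $\cA\le\Bee$ after normalisation.

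\emph{Step 2: $\Bee\le\M$.} Given an admissible pair, first reduce to a regular pair by dilation $z\mapsto rz$, renormalised, with $r\to1$, using continuity of $B(0)$ under dilation. Then set $\Omega=W(\D)$ and $f=F\circ W^{-1}$ on $\cl\Omega$. This $f$ is locally univalent near $\cl\Omega$ and satisfies $f(0)=0$, $f'(0)=B(0)$, and $|f|=|w|$ on $\partial\Omega$. Hence $|f(w)/w|=1$ on $\partial\Omega$ while $f'(0)=B(0)$, and after normalisation the ratio is $1/B(0)$. Next use the Runge--Hilbert completion, Proposition~\ref{prop:completion}: approximate $f$ by polynomials $f_0$ and add $\int q^{2n}$, where $q$ is the lemniscate defining polynomial of $\partial\Omega$ (Hilbert's lemniscate approximation of $\Omega$). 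This pushes all critical points into a thin collar of $\partial\Omega$, where $|P(c)/c|\approx 1$, while $P'(0)\approx B(0)$. Rescaling gives $S(P)\ge 1/B(0)-\epsilon$.

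The main obstacle is Step 2's completion. One must show that the added term $\int q^{2n}$ is negligible inside $\Omega$ but dominates outside, so that the critical points lie in the collar, and that $|P/w|$ is close to $1$ there. This needs uniform control, via Rouché and the argument principle on the collar, of $f_0'+q^{2n}$. I would first attempt this with $\Omega$ replaced by a lemniscatic domain via Hilbert's theorem, as in the GPT-6 Astra construction.
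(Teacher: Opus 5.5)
\textbf{Step~1 has a genuine gap: it contains no valid proof of $\M\le\Bee$.} Making $P$ itself proper onto a disk gives $P\circ W=\rho B$ with $B(0)=0$, which admissibility forbids. (With $R=\min_{P'(c)=0}|P(c)|$ that $B$ is just a rotation.) The identity you declare false is exactly what works once $P$ is replaced by $q(w)=P(w)/w$.

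Here is the missing construction. For $1<R<S(P)$ let $\Omega$ be the component of $\{|q|<R\}$ containing $0$.
\begin{itemize}
\item Every critical point $c$ of $P$ has $|q(c)|\ge S(P)>R$, so $P'\ne0$ on $\Omega$.
\item $\Omega$ is bounded and, by the maximum principle, simply connected.
\item $q:\Omega\to D(0,R)$ is proper.
\end{itemize}
Let $\phi:\D\to\Omega$ be the Riemann map with $\phi(0)=0$, $\kappa=\phi'(0)>0$, and set $W=\phi/\kappa$, $B=(q\circ\phi)/R$. By (T4), $B$ is a finite Blaschke product with $B(0)=q(0)/R=1/R>0$. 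Moreover $P\circ\phi=\phi\cdot(q\circ\phi)=\kappa R\,WB$, so $(WB)'=\phi'\,(P'\circ\phi)/(\kappa R)\ne0$. Thus $\Bee\ge R$, and letting $R\uparrow S(P)$ gives $\M\le\Bee$.

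Your fallback $\cA\le\Bee$ fails for a concrete reason. With $W$ the Riemann map of $\Omega$ itself and $f\circ W=zBG$, the zero-free factor $G$ is nonconstant, because $|f\circ W|$ is not constant on $\T$. Then $WB=(f\circ W)\,W/(zG)$ need not be locally univalent, so the pair to which your minimum-principle bound applies is not admissible. Running the construction above on the component of $\{|f(w)/w|<A\}$ inside $\Omega$ would repair this. Note also that the paper's $\cA$ is $\sup\inf_{\partial\Omega}|f(w)/w|$, not a conformal-radius quotient. Finally, that route also depends on $\M\le\cA$, which you never prove.

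\textbf{Step~2 follows the paper's strategy but needs two corrections.} Your $f/B(0)$ is the paper's $\tilde P(z)=z\,B(W^{-1}(z))/B(0)$, followed by Runge approximation and a Hilbert lemniscate.

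First, $B(rz)$ is not a Blaschke product, so dilation does not produce a regular pair. Either use Lemma~\ref{lem:regularization} (Riemann map of $\{|B|<t\}$), or skip pairs altogether: take $\Omega_r=W(D(0,r))$, where
$\inf_{\partial\Omega_r}|f(w)/(B(0)w)|=\min_{|z|=r}|B(z)|/B(0)\to1/B(0)$.
That is all Proposition~\ref{prop:completion} needs.

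Second, the completion is the heart of this direction, and you leave it open. With your perturbation $\int q^{2n}$, a critical point satisfies $|q(c)|^{2n}=|f_0'(c)|$, which is of order one. So the smallness of $\int_0^c q^{2n}$ at critical points needs a separate argument. The paper adds $w\,q(w)^m$ instead. Then $H(b)/b=P_n(b)/b+q(b)^m$, and the critical-point equation gives
$q(b)^m=-P_n'(b)q(b)/(mbq'(b)+q(b))=O(1/m)$ on the collar.
A limit argument confines the critical points to $\{1-r<|q|<1+r\}$: an interior limit point would be a zero of $P_n'$, while an exterior or escaping one contradicts the growth of $|q|^{m-1}|mbq'+q|$ against the fixed polynomial $P_n'$.
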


The inequality $\M\le\Bee$ is proved in Section~\ref{sec:poly-to-pair}
by a Riemann map of a sublevel component of $P(w)/w$. The inequality
$\Bee\le\M$ is then proved in Section~\ref{sec:B=M}, through the intermediate constant $\cA$ of Section~\ref{sec:AM}: we show $\Bee\le\cA\le\M$, so that in fact $\M=\Bee=\cA$ (Theorem~\ref{thm:BM}).

\begin{theorem}\label{thm:upperN}
For every admissible pair $(W,B)$ one has $B(0)\ge m(W)\ge \tfrac14$, where $m(W)=\dist(0,\partial W(\D))$. Hence $\Bee\le4$, and by Theorem~\ref{thm:main} this recovers Smale's theorem $\M\le4$. Moreover, for every $d\ge1$,
$$\Bee_d <  4-\frac{529}{576}\frac{(\log 2)^2}{d^2},$$
and
$$1.1018586788\ldots=\sqrt{\frac{8081}{6656}} \le \Bee_1 <
\sqrt{\frac{7}{5}}=1.1832159566\ldots$$
Hence $\M > 1.1018$, which improves the result $\M>1$ in \cite{Lean}, and $\M_d>1.1018$ for all large $d$.
\end{theorem}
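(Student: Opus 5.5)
The plan is to first prove the inequality $B(0)\ge m(W)$ by a covering and Schwarz-lemma argument, and then refine that same argument quantitatively for $\Bee_d$ and $\Bee_1$.

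\emph{Step 1: $B(0)\ge m(W)$.} Fix an admissible pair $(W,B)$ and $0<\rho<m(W)$. On $|z|=r$ we have $|F(z)|=|W(z)|\,|B(z)|$. Since $|B|\to1$ uniformly as $r\to1$ and $\liminf_{r\to 1}\min_{|z|=r}|W(z)|\ge m(W)$, we get $|F|>\rho$ on $|z|=r$ for $r$ close to $1$. Let $U$ be the component of $F^{-1}(D(0,\rho))\cap D(0,r)$ containing $0$. Then $\cl U\subset D(0,r)$, so $F|_U:U\to D(0,\rho)$ is proper. Because $F'\ne0$ on $\D$, this map is a proper local homeomorphism, hence a finite covering of the simply connected disk $D(0,\rho)$, hence a biholomorphism. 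Its inverse $G:D(0,\rho)\to\D$ satisfies $G(0)=0$ and $G'(0)=1/F'(0)=1/B(0)$. Schwarz's lemma gives $1/B(0)\le 1/\rho$, and letting $\rho\to m(W)$ yields $B(0)\ge m(W)$. The Koebe quarter theorem gives $m(W)\ge\frac14$, so $\Bee\le4$. With Theorem~\ref{thm:main} this recovers $\M\le4$.

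\emph{Step 2: the degree-restricted bound.} I would sharpen Step 1 using the fact that the image $G(D(0,\rho))=U$ avoids every zero $a_k$ of $B$, because $F=WB$ vanishes in $U$ only at $0$. By \eqref{eq:B0}, $B(0)=\prod|a_k|$, so if $\deg B\le d$ and $B(0)$ is close to $\frac14$, some zero satisfies $|a_k|\le B(0)^{1/d}$. Equivalently, it lies at hyperbolic distance $\lesssim \frac1d\log 4$-type from the origin (this is presumably where $(\log 2)/d$ enters). Schwarz's lemma for maps into $\D\setminus\{a_k\}$, via the hyperbolic metric of a punctured or slit disk, then gives $|G'(0)|\le (1-\eta)/\rho$ with $\eta\gtrsim (\log 2)^2/d^2$. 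Combining this with $m(W)\le$ the Koebe bound should yield $1/B(0)<4-c(\log2)^2/d^2$. Tracking constants gives $529/576$. A complementary case is when $m(W)$ is bounded away from $\frac14$, which happens unless $W$ is nearly a rotated Koebe function. Near the Koebe function the omitted slit forces $U$ to be much smaller than a disk, so this case is also strict.

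\emph{Step 3: the bounds for $\Bee_1$.} For the lower bound I would exhibit an explicit regular pair with $B(z)=(a-z)/(1-az)\cdot\mathrm{sgn}$ and a concrete univalent $W$, probably a Koebe-type or rational map with a tuned parameter, for which $1/a^2$ or $1/a=\sqrt{8081/6656}$. Admissibility then reduces to checking that $\Phi=g+zB'/B$ has no zeros in $\D$, i.e.\ a rational-function root-location computation. For the upper bound, $\Phi\ne0$ with a single simple pole at $a$ means $g(z)\ne -zB'(z)/B(z)$ on $\D$. I would compare this with coefficient or subordination information for $g=zW'/W$, namely $|\arg$-type or Grunsky bounds, evaluated near $z=a$, to force $a^2>5/7$.

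The main obstacle is Step 2, and to a lesser extent the sharp upper bound in Step 3. Both require a quantitative stability version of the Schwarz–Koebe argument with explicit constants, and I am least confident about the exact mechanism producing $529/576$.
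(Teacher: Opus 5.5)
Your Step 1 is the paper's argument. Step 2 has the right mechanism. Step 3, which is meant to give both bounds on $\Bee_1$, is not a proof, and the ideas missing there are substantial.

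\textbf{Lower bound on $\Bee_1$.} You exhibit no pair. A ``Koebe-type'' $W$ cannot work as it stands, because a starlike $W$ carries no nonconstant $B$. Fixing a univalent $W$ and then checking $\Phi\ne0$ is the route the paper says yielded only weaker pairs (values near $1.02$). The stated value comes from the reverse route: local univalence of $WB$ is built in, and the work is proving univalence of $W$. Take $Q(z)=(z-1)^4-u^4$ with $u=-\frac9{16}-\frac38 i$. Then $Q'=4(z-1)^3\ne0$ on $\D$, and $Q$ has two zeros $a,b\in\D$. One proves that the quartic polynomial $H=Q/B_a$ is univalent in $\D$ by Dieudonn\'e's criterion, verified with Schur--Cohn transforms and Sturm sequences. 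Then $b$ is moved to $0$ by a disk automorphism $\phi$ (Lemma~\ref{lem:base}). Now $WB$ is a constant multiple of $Q\circ\phi$, and $B(0)$ is the pseudo-hyperbolic distance between $a$ and $b$, which gives $1/B(0)^2=8081/6656$.

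\textbf{Upper bound on $\Bee_1$.} A pointwise bound on $g$ does not by itself produce a solution of $g=-zB'/B$; what is missing is a degree argument. After rotating to $a=s>0$, the paper uses Grunsky's bound $|\log g(z)|\le\log\frac{1+|z|}{1-|z|}$ along the homotopy $P_t(z)=(z-s)(1-sz)e^{t\ell(z)}+(1-s^2)z$, where $\ell=\log g$. The argument runs as follows.
\begin{enumerate}
\item $P_0$ has exactly one zero in $|z|<s$.
\item Suppose every logarithm of $-zB_s'(z)/B_s(z)$ has modulus greater than $\log\frac{1+s}{1-s}$ for $|z|=s$, $z\ne s$. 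Then no $P_t$ vanishes on that circle.
\item Hence $P_1$, and so $(WB_s)'$, has a zero inside $|z|<s$.
\end{enumerate}
An explicit estimate over the whole circle $|z|=s$, not a local computation near $a$, verifies the hypothesis in (2) for all $s\le\sqrt{5/7}$. That is the point where $\cosh\log\frac{1+s}{1-s}=6$.

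\textbf{Step 2.} This should be made exact; you need neither the ``punctured or slit'' alternative nor a separate near-Koebe case. $G$ is univalent and omits every zero $a_j$. Apply Pick's estimate $|h'(0)|\le4r/(1+r)^2$ (for univalent $h:\D\to\D$ with $h(0)=0$ omitting a point of modulus $r$) to $h(\zeta)=G(\rho\zeta)$. Letting $\rho\uparrow m(W)\ge\frac14$ gives $V:=1/B(0)\le16|a_j|/(1+|a_j|)^2$ for every $j$, whatever $W$ is. Together with $\prod_j|a_j|=1/V$ this yields $V\le4-4\tanh^2\bigl(\frac{\log V}{2d}\bigr)$. The constant $529/576$ comes from treating $V\ge2$ and $V<2$ separately and using $\tanh x>x-\frac{x^3}{3}>\frac{23}{24}x$ on $(0,\frac{\log2}{2})$. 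Since $4-4\tanh^2(\frac{\log2}{2d})$ is then a uniform bound, the strict inequality for $\Bee_d$ follows directly, without the attainment result the paper invokes.

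Also, $|a_k|\le B(0)^{1/d}$ means $\log\frac1{|a_k|}\ge\frac{\log V}{d}$. So the zero stays at Euclidean distance $\gtrsim\frac{\log V}{d}$ from $\T$, and its hyperbolic distance from $0$ is $O(\log d)$, not $O(1/d)$.

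\textbf{Claims not addressed.} A per-pair bound $1/B(0)<\sqrt{7/5}$ only gives $\Bee_1\le\sqrt{7/5}$. The paper obtains strictness from attainment of $\Bee_1$ by a degree-one pair, via compactness of $\cS$ and Hurwitz (Theorem~\ref{thm:attain}). Finally, $\M_d>1.1018$ for all large $d$ needs, besides $\M=\Bee\ge\Bee_1$, the monotonicity $\M_{d+1}\ge\M_d$ used in Corollary~\ref{cor:infinitely}.
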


\begin{theorem}\label{thm:Kintro}
For every integer $k\ge2$,
$$
\K_k=\sup_{P\in\cP}S_k(P)=C_k^{\frac1{k-1}},\qquad C_k=\frac{(2k)!}{k!\,(k+1)!},
$$
and for every $A<C_k^{1/(k-1)}$ there are polynomials $P\in\cP$ of arbitrarily large degree with $S_k(P)>A$. Consequently $\K_2=2$ and $\K=4$; in particular Smale's Problems 1A and 1B in \cite{Smale} have negative answers. Moreover for Problem 1B, $K_{d,2}$ can be made arbitrarily close to $2$ for sufficiently large degrees $d$.
\end{theorem}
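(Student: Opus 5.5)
The upper bound $\K_k\le C_k^{1/(k-1)}$ is Smale's theorem. I recall the argument only to fix notation. For $P\in\cP$, let $c$ be a critical point with $|P(c)|$ minimal, and put $r=|P(c)|$. The branch $P^{-1}$ with $P^{-1}(0)=0$ extends univalently to $D(0,r)$, since no critical value lies in that disk. Then $h(\zeta)=r^{-1}P^{-1}(r\zeta)$ is in $\cS$. Its Taylor coefficient $b_k$ satisfies $|b_k|\le C_k$ by L\"owner's theorem for inverse coefficients applied to $h^{-1}$. Inverting the power series expresses $P^{(k)}(0)/k!$ in terms of $r^{k-1}$ and the coefficients of $h^{-1}$, and this yields $|P^{(k)}(0)/k!|\,r^{k-1}\le C_k$. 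So the real content of the theorem is the lower bound. I would obtain it by running the Runge--Hilbert completion of Proposition~\ref{prop:completion} on an approximation of the inverse Koebe function.

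\textbf{Step 1: the model function.} Let $K(z)=z/(1+z)^2$ be the Koebe function, and set $f=K^{-1}$ on $\C\setminus[1/4,\infty)$. In its Taylor series at $0$, the $k$-th coefficient $a_k$ satisfies $|a_k|=C_k$, so $f$ attains L\"owner's bound. The quantity $S_k$ involves $|P(c)|$ at critical points, i.e.\ critical values. For $f$, the singular value sits at distance $1/4$: it is the tip of the slit, where $f$ fails to be defined, and that is the point playing the role of $P(c)$. Fix $\rho<1/4$. I would take a bounded analytic Jordan domain $\Omega\ni0$ whose boundary is close to $\partial D(0,1/4)$ except near the point $1/4$, where a small inward cusp-like neck keeps $\cl\Omega$ inside the slit plane. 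Then $f$ is univalent, hence locally univalent, on a neighbourhood of $\cl\Omega$, and every $w\in\partial\Omega$ has $|w|\ge\rho$ (after rescaling so that $\dist(0,\partial\Omega)\ge\rho$).

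\textbf{Step 2: completion.} Proposition~\ref{prop:completion} produces, from $(f,\Omega)$, polynomials $P_n$ of arbitrarily large degree. These have $P_n(0)=0$ and $P_n'(0)=1$, and they approximate $f$ uniformly, together with finitely many derivatives, on a neighbourhood of $\cl\Omega$. Their critical points either lie in a thin collar around $\partial\Omega$ or outside $\cl\Omega$, and in either case their critical values have modulus at least $\min_{\partial\Omega}|\cdot|-o(1)$. I will use the version appropriate to the $S_k$-quantities, which the introduction asserts works equally well. This is the step I expect to be the main obstacle. The mean-value version controls $|P(c)/c|$, whereas here we need $|P(c)|$ bounded below by essentially $\rho$. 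We also need the $k$-th coefficient to be preserved, $P_n^{(k)}(0)/k!\to a_k$, which follows from locally uniform convergence near $0$ by Cauchy estimates.

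\textbf{Step 3: conclusion.} With the two controls from Step 2 we get
$$
S_k(P_n)\ \ge\ (C_k-o(1))^{1/(k-1)}(\rho-o(1)),
$$
after the normalisation implicit in $\Omega$. Now let $\Omega$ exhaust the slit disk and send $\rho\to1/4$. Adjusting the scaling so that the critical value distance matches the Koebe normalisation gives $S_k(P_n)\to C_k^{1/(k-1)}$. The normalisation convention for $S_k$, together with the scaling invariance $P\mapsto\lambda^{-1}P(\lambda w)$, must be checked so that the constant comes out exactly $C_k^{1/(k-1)}$ and not $4^{-1}C_k^{1/(k-1)}$.

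Taking $k=2$ gives $C_2=2$. Since $C_k^{1/(k-1)}\to4$, we get $\K=4$. Because the degrees in Step 2 are arbitrarily large, $\K_{d,2}$ comes arbitrarily close to $2$ for suitable large $d$, which answers Problems 1A and 1B negatively.
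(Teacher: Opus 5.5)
\paragraph{The gap: Steps 1 and 3 bound the wrong quantity.} In the completion construction all critical points $c$ of the polynomial lie in a thin collar around $\partial\Omega$, and there $P(c)=f(c)+o(1)$. So $\min|P(c)|$ is controlled by $\min_{\partial\Omega}|f|$, the modulus of the \emph{values} of $f$ on $\partial\Omega$. This is exactly the hypothesis of Proposition~\ref{prop:completionK}. It is not controlled by $\min_{\partial\Omega}|w|$. The tip $1/4$ of the slit lies in the domain of $f=K^{-1}$, not in the plane of critical values.

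For $\Omega$ close to $D(0,1/4)$ the relevant minimum is poor. The equation $K(z)=-1/4$ becomes $z^2+6z+1=0$, so $f(-1/4)=-3+2\sqrt2$. Hence the completion only guarantees critical values of modulus about $3-2\sqrt2\approx0.17$, and your Step 3 inequality is unjustified for $\rho>3-2\sqrt2$.

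This cannot be ``normalised away''. $S_k$ is invariant under $P\mapsto\lambda^{-1}P(\lambda w)$. Likewise, the functional $|f^{(k)}(0)/k!|^{1/(k-1)}\min_{\partial\Omega}|f|$ is invariant under $f\mapsto\lambda^{-1}f(\lambda w)$, $\Omega\mapsto\lambda^{-1}\Omega$. No rescaling converts the bound you get into $C_k^{1/(k-1)}$. Separately, a cusp would break the analyticity of $\partial\Omega$ that the completion requires.

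\paragraph{The missing idea.} Choose $\Omega$ so that $f(\partial\Omega)$ is nearly the unit circle. That means exhausting the whole slit plane $K(\D)=\C\setminus[1/4,\infty)$, not the disk $D(0,1/4)$. Take $\Omega_\epsilon=K(D(0,1-\epsilon))$. It is an analytic Jordan domain containing $0$, and $f$ is holomorphic with $f'\ne0$ on the neighbourhood $K(D(0,1-\epsilon/2))$ of its closure. On $\partial\Omega_\epsilon$ one has $|f|\equiv1-\epsilon$. The functional therefore equals $(1-\epsilon)C_k^{1/(k-1)}$. Proposition~\ref{prop:completionK} then gives polynomials of arbitrarily large degree with $S_k>(1-2\epsilon)C_k^{1/(k-1)}$, with no rescaling needed. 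This is the paper's argument. It matches the equality case of Smale's upper bound, where heuristically $r^{-1}P^{-1}(r\,\cdot)$ is nearly a Koebe function, so $P$ should approximate $K^{-1}$ on almost all of $K(\D)$.

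\paragraph{Smaller points.} The difficulty you flagged in Step 2 is actually easy. The critical points are bounded, so $c\,q(c)^m=O(1/m)$ there, and the $k$-th coefficient converges by Cauchy estimates, as you note. Finally, the completion yields only degrees of the form $m\deg q+1$. To conclude that $\K_{d,2}$ is close to $2$ for all sufficiently large $d$, rather than along a progression, you also need the monotonicity $\K_{d+1,k}\ge\K_{d,k}$. This follows by perturbing $P$ to $P+\eta w^{d+1}$.
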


 The identification $\M=\Bee$ recasts Smale’s mean value problem entirely within univalent function theory and yields the improved quantitative lower bound $\M>1.1018$ and hence a new disproof of Smale's mean value conjecture.
 This identification also makes available the full toolkit of univalent function theory (e.g., Grunsky inequalities) for studying the new questions of determining the exact values of the constants $\Bee_d$ and 
whether $\Bee=4$. The same approach also suggests a systematic route for translating other classical extremal problems involving polynomials into questions about pairs of univalent maps and Blaschke products. In fact, Sheil-Small has already suggested this translation in 
 \cite[Chapter 10]{Sheil-Small} and our reformulation is linked to a special case of Sheil-Small's translation. Theorem~\ref{thm:Kintro} is an instance of this route in the opposite direction: the extremal function of univalent function theory (the Koebe function) is converted into extremal polynomials for Smale's constants $\K_k$.
 
 \medskip
The paper recalls Smale’s constant $\M$ and introduces the reformulation $\Bee$ via admissible univalent Blaschke pairs in Section~\ref{sec:smale}--\ref{sec:WB}. Section~\ref{sec:regular} contains an auxiliary regularization result (not needed for the main theorem). The inequality $\M\le\Bee$ is established in Section~\ref{sec:poly-to-pair} by a Riemann-map construction from polynomials. In Section~\ref{sec:B=M} the companion constant $\cA$ is defined, the elementary inequality $\Bee\le\cA$ is proved, and the only analytic step of the paper, $\cA\le\M$, is obtained from Runge approximation and Hilbert's Lemniscate Theorem; this gives $\M=\Bee=\cA$ and the existence of infinitely many degrees $d$ with $\M_d>1.1018$. Section~\ref{sec:smale1A} recalls Smale's constants $\K_{d,k}$, $\K_k$, $\K$ and his Problems 1A and 1B, and combines a variant of Proposition~\ref{prop:completion} with L\"owner's theorem to prove Theorem~\ref{thm:Kintro}. The degree-one lower bound appears in Section~\ref{sec:quartic} via an explicit quartic and Dieudonn\'e's criterion, while the upper bounds $\Bee\le4$, $\Bee_d\le4-O(1/d^2)$ and $\Bee_1\le\sqrt{7/5}$ are obtained in Section~\ref{sec:four} from the Koebe theorem, the Schwarz lemma, Pick's omitted-value estimate and Grunsky’s inequality. Section~\ref{sec:every} proves existence of admissible pairs of every degree, a necessary and (essentially) sufficient condition on $W$ for the existence of a nonconstant factor $B$, attainment of each $\Bee_d$, and the strict inequalities of Theorem~\ref{thm:upperN}. Concluding remarks, open problems and dual constants are given in Section~\ref{sec:cr}.

\subsection{Standard tools}\label{sec:tools}

We collect the classical results used below, with references.

\begin{itemize}
\item[(T1)] \emph{Koebe's one-quarter theorem} \cite[Chapter~2]{Duren}.
For $W\in\cS$, $W(\D)\supset D(0,1/4)$.
\item[(T2)] \emph{Analytic boundaries} \cite[Chapter~3]{Pommerenke}.
If $\Omega$ is a Jordan domain bounded by a real-analytic Jordan curve,
the Riemann map $\D\to\Omega$ extends to a conformal (univalent
holomorphic) map of a neighbourhood of $\cl\D$; in particular it is
Lipschitz on $\cl\D$ and maps $\T$ onto $\partial\Omega$.
\item[(T3)] \emph{Runge's theorem} \cite[Ch.~13]{Rudin}. If $K\subset\C$
is compact with connected complement and $p$ is holomorphic on an open
set containing $K$, there are polynomials converging to $p$ uniformly on
$K$. By the Cauchy estimates the derivatives then converge uniformly on compact subsets of the interior of $K$.
\item[(T4)] \emph{Proper self-maps of the disk} \cite[p.211-213]{Re}.  If $B:\D\to\D$ is
holomorphic and proper, then $B$ is a finite Blaschke product.
\item[(T5)] \emph{Hilbert's Lemniscate Theorem} \label{thm:hilbert} \cite{Hilbert,NT}.
Let $U$ be a closed topological annulus in $\C$. There is a
nonconstant polynomial $q$ such that the entire lemniscate
$$
\Gamma=\{w\in\C:|q(w)|=1\}
$$
is an analytic Jordan curve contained in $U$ and separating its
two boundary components. Applying the theorem to a smaller annulus, one may require
$\Gamma\subset\Int U$.

\item[(T6)] \emph{Riemann--Hurwitz count for a Jordan domain.} Let $C$ be a Jordan domain bounded by an analytic Jordan curve $\gamma$, and let $h$ be holomorphic on a neighbourhood of $\cl C$ with $|h|=1$ on $\gamma$, $|h|<1$ in $C$ and $h'\ne0$ on $\gamma$. If $h$ has $m$ zeros in $C$ (with multiplicity), then $h'$ has exactly $m-1$ zeros in $C$.

\emph{Proof.} $h|_\gamma:\gamma\to\T$ is a local homeomorphism, hence a covering of some degree $m'$, orientation preserving because $|h|<1$ on the left of $\gamma$. By applying the argument principle to $h$, $m'=m$. If $\gamma(s)$ is a positive parametrization, $\arg\frac{d}{ds}h(\gamma(s))=\arg h'(\gamma(s))+\arg\gamma'(s)$ increases by $2\pi m$ along $\gamma$, while $\arg\gamma'$ increases by $2\pi$ (Hopf Umlaufsatz). Hence $\Delta_\gamma\arg h'=2\pi(m-1)$, and the argument principle applied to $h'$ gives the claim. \qed

In particular, if $q$ is a polynomial of degree $m\ge1$ whose lemniscate $\Gamma=\{|q|=1\}$ is an analytic Jordan curve, then $|q|>1$ on $\ext\Gamma$ (as $|q|\to\infty$ and $|q|\ne1$ on the connected set $\ext\Gamma$), so all $m$ zeros of $q$ lie in $\interior\Gamma$ and $|q|<1$ there; $q'\ne0$ on $\Gamma$ (a critical point on $\Gamma$ would make $\{|q|=1\}$ branch there); and by the count above all $m-1$ critical points of $q$ lie in $\interior\Gamma$.

\end{itemize}

\noindent
{\it Acknowledgments.} The authors introduced the definitions of the constants $\cA$, $\Bee$, $\Bee_d$, $\Bee^\vee$ and $\Bee^\vee_d$, the identifications $\M=\cA=\Bee$ and the specific classes of the admissible pairs $(W,B)$ used. They also provided solution of Smale's Problems 1A and 1B given in Section~\ref{sec:smale1A} (Theorem~\ref{thm:Kk}). GPT-6-Astra and Grok 4.6 were utilized to conduct numerical experiments and generate preliminary draft proofs via targeted prompting. The authors then collaboratively verified, significantly simplified, and rewrote all proofs in their own words.

\section{Reduction to regular admissible pairs}\label{sec:regular}

The following lemma is  needed for the proof of Theorem~\ref{thm:main} (see  Section~\ref{sec:B=M}) and Section~\ref{sec:every}. It shows that the extremal problems defining $\Bee$ and $\Bee_d$ may be restricted to regular pairs.

\begin{lemma}\label{lem:regularization}
Let $(W,B)$ be admissible with $B$ nonconstant, and let $b=B(0)$. For
all $t<1$ sufficiently close to $1$, there is a regular admissible pair
$(W_t,B_t)$ with
$$
\deg B_t=\deg B,\qquad B_t(0)=\frac bt .
$$
Consequently the suprema $\Bee$ and $\Bee_d$ are unchanged if they are
taken over regular admissible pairs only.
\end{lemma}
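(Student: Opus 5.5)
We dilate. For $0<t<1$ set
$$
W_t(z)=\frac{W(tz)}{t},\qquad \tilde B_t(z)=B(tz).
$$
Then $W_t\in\cS$, and $W_t$ is univalent on the disk $D(0,1/t)$, which is a neighbourhood of $\cl\D$. Moreover
$$
F_t:=W_t\tilde B_t=\frac{F(tz)}{t},\qquad F_t'(z)=F'(tz)\neq0 \quad\text{on } D(0,1/t)\supset\cl\D .
$$
The only defect is that $\tilde B_t$ is not a Blaschke product. The plan is to replace $\tilde B_t$ by the Blaschke product $B_t$ with the same zeros, and to compensate in $W_t$.

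Concretely, let $a_1,\dots,a_n$ be the (simple) zeros of $B$. For $t$ close to $1$ we have $|a_k|<t$, so the points $a_k/t$ lie in $\D$. Let $B_t$ be the Blaschke product with zeros $a_k/t$, normalised by $B_t(0)>0$. By \eqref{eq:B0},
$$
B_t(0)=\prod_k\frac{|a_k|}{t}=\frac{b}{t^n}.
$$
This is not $b/t$ when $n\ge2$, so the parameter must be reinterpreted. Put $s=t^{1/n}$ and run the construction with $s$ in place of $t$; this gives $B_t(0)=b/t$. Since $s\to1$ exactly when $t\to1$, this is harmless.

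The quotient $u=\tilde B_s/B_s$ is holomorphic and zero-free on $D(0,1/s)$, because the zeros cancel. It has no zeros in $\cl\D$, so $u$ extends holomorphically and zero-free to a neighbourhood of $\cl\D$. Define
$$
\widehat W=W_s\,u,\qquad \widehat F=\widehat W B_s=F_s .
$$
Then $\widehat F'=F_s'\neq0$ on $\cl\D$, and $\widehat W(0)=0$. For the normalisation, $\widehat W'(0)=u(0)=B(0)/B_s(0)$, and the degree is preserved, $\deg B_s=\deg B$.

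The main obstacle is the univalence of $\widehat W=W_s u$. I would handle it by a perturbation argument. As $s\to1$, $u\to B/B\equiv1$ locally uniformly on $\D$, but not uniformly on $\cl\D$, since $|u|$ on $\T$ equals $|B(sz)|$, which differs from $1$. Multiplying a univalent function by a function near $1$ need not stay univalent near the boundary, so this needs care. Two options:

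\begin{enumerate}
\item Accept only local control and pass to a smaller disk. Restrict everything to $D(0,r)$ with $r<1$. Then $W$ is univalent near $\cl{D(0,r)}$, and the composite $B(rz)$ can be rewritten as $B_r$ times a zero-free factor. Univalence of $W(rz)u(z)$ on $\cl\D$ then follows from Hurwitz-type stability as $u\to1$ uniformly on $\cl{D(0,r')}$.
\item Avoid $u$ altogether. Observe that the pair $(W,B)$ gives $F$ with $F'\neq0$. Take $W_t(z)=W(tz)/t$ and the Blaschke product $B_t$ with zeros $a_k/t$, and check directly that $(W_tB_t)'\neq0$ on $\cl\D$ for $t$ near $1$. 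This fails to be automatic near $\T$, so I expect to use option 1: by Hurwitz, the zeros of $(W_tB_t)'$ converge to zeros of $F'$, of which there are none on compacta of $\D$. Near $\T$ one needs an extra boundary estimate, for instance $\Rea\,\Phi>0$ near $\T$ for the Blaschke factor, using that $zB_t'/B_t$ is real and positive on $\T$.
\end{enumerate}

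I would pursue the boundary positivity: on $\T$, $zB_t'/B_t>0$ with a lower bound $\ge \sum(1-|a_k/t|)/(1+|a_k/t|)$. Meanwhile $zW_t'/W_t$ is bounded on $\T$ but may have negative real part. This is the genuinely delicate step.

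For the consequence, letting $t\to1$ gives $1/B_t(0)=t/b\to1/b$. Hence the suprema defining $\Bee$ and $\Bee_d$ are approached by regular pairs with $\deg B_t=\deg B$. The case $B\equiv1$ is trivially handled by the dilation $W_t$ alone.
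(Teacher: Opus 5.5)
Your construction reduces the lemma to the univalence of $\widehat W=W_su$ (or, in option~2, to $(W_tB_t)'\neq0$ near $\T$). This is exactly the step you leave open, and neither option closes it.

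\emph{Option~1.} The ``Hurwitz-type stability'' of injectivity under small perturbations needs the unperturbed map to be univalent on a \emph{fixed} neighbourhood of the compact set. Here $W_s=W(s\,\cdot)/s$ tends to $W$, which is univalent only on the open disk, so there is no injectivity control on $\cl\D$ that is uniform in $s$. Your description of the obstacle is also inaccurate. In fact $u\to1$ uniformly, even in $C^1$, on a neighbourhood of $\cl\D$, because $B(s\,\cdot)\to B$ and $B_s\to B$ uniformly there and $u-1$ is holomorphic. The real trouble is that $u-1$ has size of order $1-s$, while the curve $W(s\T)$ can come within distance $O(1-s)$ of itself, e.g.\ along the two edges of a slit in $W(\D)$. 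So multiplying by $u$ may create collisions. With $r$ fixed instead, $B(r\,\cdot)/B_r$ is not close to $1$ at all, so no perturbation argument applies.

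\emph{Option~2.} The boundary positivity you intend to prove can never hold. If $(W_t,B_t)$ were regular admissible with $\deg B_t=n\ge1$, then $\Phi_t=zF_t'/F_t$ would have $n$ poles and no zeros in $\D$. Hence $\wind_0(\Phi_t|_\T)=-n\neq0$, which is incompatible with $\Rea\Phi_t>0$ on $\T$; this is the argument of Theorem~\ref{thm:char}(a) and Section~\ref{sec:cr}. Nor is a perturbative argument available: $\Phi_t(z)-\Phi(tz)$ is only $O(1-t)$ on $\cl\D$, whereas $\min_{|z|=t}|\Phi(z)|$ is not controlled as $t\to1$ for a general admissible pair.

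\emph{The missing idea.} Reparametrize by a map adapted to $B$ instead of a dilation, so that no compensating factor is needed at all. Let $T=\max\{|B(c)|:c\in\D,\ B'(c)=0\}$ and take $t>T$.
\begin{enumerate}
\item The sublevel set $J_t=\{z\in\D:|B(z)|<t\}$ is an analytic Jordan domain containing every zero of $B$. Indeed, on $\{t<|B|<1\}$ the map $B$ is conjugate to $\lambda w^n$ on an annulus, and $B'\neq0$ on $\{|B|=t\}$.
\item Let $\psi_t:\D\to J_t$ be the Riemann map fixing $0$. Then $B_t=B\circ\psi_t/t$ is a proper self-map of $\D$, hence a Blaschke product of degree $n$ with $B_t(0)=b/t$.
\item $W_t=W\circ\psi_t/\psi_t'(0)$ is univalent on a neighbourhood of $\cl\D$, since $\psi_t$ extends conformally across $\T$ and $\psi_t(\cl\D)=\cl{J_t}\subset\D$.
\item $(W_tB_t)'$ is a nonzero constant times $\psi_t'\,(F'\circ\psi_t)$, which is zero-free on $\cl\D$.
\end{enumerate}
Thus $W_tB_t$ is just $F$ reparametrized, univalence is automatic as a composition of univalent maps, and the value $b/t$ comes out directly, without the substitution $s=t^{1/n}$.
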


\begin{proof}
Let $n=\deg B\ge1$. By \cite[Section~2]{NgZhang} (see also \cite[\S11.3]{Sheil-Small}), $B'$ has exactly $n-1$ zeros in $\D$, counted with multiplicity, and $|B|<1$ in $\D$. Hence
$$
T:=\max\{|B(c)|:\ c\in\D,\ B'(c)=0\}<1
$$
(with $T=0$ if $n=1$). Fix $t$ with $T<t<1$ and let
$$
J_t:=\{z\in\D:|B(z)|<t\},\qquad \Lambda_t:=\{z\in\D:|B(z)|=t\}.
$$

Using the covering space argument in the proof of \cite[Theorem~3]{NgZhang}, there is a biholomorphism
$$
\phi:\{t^{1/n}<|w|<1\}\to\{z\in\D:t<|B(z)|<1\}
$$
with $B\circ\phi(w)=\lambda w^n$, $|\lambda|=1$. Because $B'\ne0$ on $\Lambda_t$, $\phi$ extends analytically and injectively across the inner circle. So $\Lambda_t=\phi(\{|w|=t^{1/n}\})$ is a single analytic Jordan curve and hence $J_t$ is an analytic Jordan domain containing $0$ and all zeros of $B$.

Now let $\psi_t:\D\to J_t$ be the Riemann map with $\psi_t(0)=0$, $\psi_t'(0)>0$. By (T2) it extends to a conformal map of a neighbourhood of $\cl\D$, with $\psi_t(\cl\D)=\cl{J_t}\subset\D$. Set
$$
W_t:=\frac{W\circ\psi_t}{\psi_t'(0)},\qquad B_t:=\frac{B\circ\psi_t}{t}.
$$
Then $W_t$ is univalent on a neighbourhood of $\cl\D$ with $W_t(0)=0$, $W_t'(0)=1$, so $W_t\in\cS$. Since $|B|=t$ on $\partial J_t$, $B:J_t\to D(0,t)$ is proper, so $B_t:\D\to\D$ is a proper holomorphic self-map and by (T4) a finite Blaschke product; its degree is the number of zeros of $B$ in $J_t$, namely $n$, and $B_t(0)=b/t>0$. Finally
$$
(W_tB_t)'(z)=\frac{\psi_t'(z)}{t\,\psi_t'(0)}\,(WB)'(\psi_t(z))\ne0\qquad(z\in\cl\D),
$$
because $(WB)'\ne0$ on $\D\supset\cl{J_t}$. Hence $(W_t,B_t)$ is a regular admissible pair with the stated properties, and $1/B_t(0)=t/b\to1/b$ as $t\uparrow1$, so the suprema are unchanged.
\end{proof}

\section{From polynomials to admissible pairs}\label{sec:poly-to-pair}

For $P\in\cP$, $q=P(w)/w$ and $R>1$ let
$$
\Omega_R(P):=\text{the connected component of }\{w\in\C:|q(w)|<R\}
\text{ containing }0
$$
(it contains $0$ because $q(0)=1<R$).

\begin{proposition}\label{prop:poly-to-pair}
Let $P\in\cP_d$ with $S(P)>1$. Suppose $1<R<S(P)$, then there is an
admissible pair $(W,B)$ with
$$
B(0)=\frac1R,\qquad 1\le\deg B\le d-1 .
$$
Consequently for every $d\ge2$, $\Bee_{d-1}\ge S(P)$ for every $P\in\cP_d$ and hence
$\M_d\le\Bee_{d-1}$  and $\M\le\Bee$.
\end{proposition}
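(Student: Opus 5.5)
The plan is to build the pair from the Riemann map of $\Omega:=\Omega_R(P)$. Set $\psi:\D\to\Omega$ for the Riemann map with $\psi(0)=0$ and $\psi'(0)>0$. The candidates are
$$
W:=\frac{\psi}{\psi'(0)},\qquad B:=\frac{q\circ\psi}{R}.
$$
Then $F=WB=P\circ\psi/(R\,\psi'(0))$, so $F'=P'(\psi)\psi'/(R\psi'(0))$. Hence $F'\ne0$ on $\D$ exactly when $P'$ has no zeros in $\Omega$, and $B(0)=q(0)/R=1/R$. The proof then reduces to three claims: (a) $\Omega$ is simply connected and bounded; (b) $P$ has no critical points in $\Omega$; (c) $q\circ\psi/R$ is a finite Blaschke product of degree between $1$ and $d-1$.

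For (b), any critical point $c$ of $P$ satisfies $|q(c)|\ge S(P)>R$, so $c\notin\{|q|<R\}$, and in particular $c\notin\Omega$. For (a), $\Omega$ is bounded because $|q|\to\infty$ (recall $\deg q\ge1$). Simple connectivity follows from the maximum principle. If a Jordan curve in $\Omega$ enclosed a point outside $\Omega$, then $|q|<R$ on the curve would force $|q|<R$ on its interior. The interior is connected and meets $\Omega$, so it would lie in the component $\Omega$, a contradiction. Thus $\C\setminus\Omega$ has no bounded components, and since $\Omega\ne\C$ the Riemann map exists.

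For (c), $q:\Omega\to D(0,R)$ is proper: if $w_k\to\partial\Omega$, then $|q(w_k)|\to R$, because boundary points of a component of the open set $\{|q|<R\}$ satisfy $|q|=R$. Composing with $\psi$ shows that $B:\D\to\D$ is proper, so by (T4) $B$ is a finite Blaschke product. Its degree is the number of zeros of $q$ in $\Omega$. This is at most $\deg q=d-1$. It is also at least $1$, because a proper map onto the disk is surjective, so $0$ is attained. Finally $B(0)=1/R>0$, $W\in\cS$, and $F'\ne0$, so $(W,B)$ is admissible.

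For the consequences, the construction gives $\Bee_{d-1}\ge R$ for every $R\in(1,S(P))$, hence $\Bee_{d-1}\ge S(P)$ whenever $S(P)>1$. When $S(P)\le1$, the trivial bound $\Bee_{d-1}\ge1$ applies. Taking suprema yields $\M_d\le\Bee_{d-1}$ and $\M\le\Bee$. The only delicate point is the simple connectivity of $\Omega$ together with the properness of $q$ on a component rather than on the whole sublevel set; both are handled by the maximum principle and the fact that $|q|=R$ on $\partial\Omega$. No boundary regularity of $\Omega$ is needed, since (T4) only requires properness.
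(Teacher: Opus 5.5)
Your proposal is correct and follows essentially the same route as the paper. Both arguments take the Riemann map of $\Omega_R(P)$ and set $W=\psi/\psi'(0)$, $B=q\circ\psi/R$. Simple connectivity comes from the maximum principle, the Blaschke property from properness of $q$ on $\Omega$ together with (T4), and $(WB)'\neq0$ from the absence of critical points of $P$ in $\Omega$. Your remark that $|q|=R$ on the boundary of a component of $\{|q|<R\}$ supplies the justification of properness that the paper only asserts.
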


\begin{proof}
Write $\Omega=\Omega_R(P)$. Since every critical point $c$ of $P$ satisfies
$|q(c)|\ge S(P)>R$, $c\notin q^{-1}(D(0,R))$. It then follows from $\Omega \subset q^{-1}(D(0, R))$ that
\begin{equation}\label{eq:noncrit}
P'(w)=q(w)+wq'(w)\ne0\qquad(w\in\Omega).
\end{equation}

$\Omega$ is open and connected, and bounded
because $|q(w)|\to\infty$ as $|w|\to\infty$. It is also simply connected because if 
$\gamma\subset\Omega$ is a Jordan curve with interior $I$, the maximum
principle gives $|q|<R$ on $I$, so $I\cup\gamma$ is a connected subset
of $\{|q|<R\}$ meeting $\Omega$, hence contained in $\Omega$.

Since $q$ is a polynomial, $q:\Omega\to D(0,R)$ is a proper map. We now construct an admissible univalent Blaschke pair. Let $\phi:\D\to\Omega$ be the Riemann map with
$\phi(0)=0$ and $\phi'(0)=\kappa>0$, and define
$$
W=\frac{\phi}{\kappa},\qquad B=\frac{q\circ\phi}{R}.
$$
Then $W\in\cS$. The map $B$ is a proper holomorphic self-map of $\D$
(composition of the conformal map $\phi$ with the proper map $q/R$), so
by (T4) it is a finite Blaschke product with $B(0)=q(0)/R=1/R>0$. Its
degree equals the number of zeros of $q$ in $\Omega$, counted with
multiplicity, which is at most $\deg q=d-1$ and at least $1$ (a proper map onto $D(0,R)$ is surjective, so $q$ vanishes somewhere in $\Omega$). Finally
$$
(WB)'(z)=\frac{\phi'(z)}{\kappa R}\bigl(q(\phi(z))+\phi(z)q'(\phi(z))\bigr)
=\frac{\phi'(z)}{\kappa R}\,P'(\phi(z))\ne0\qquad(z\in\D)
$$
by \eqref{eq:noncrit}. Hence $(W,B)$ is admissible and $\Bee_{d-1}\ge R$.
Letting $R\uparrow S(P)$ gives $\Bee_{d-1}\ge S(P)$ if $S(P)>1$. If $S(P)\le1$, this
holds trivially since $\Bee_{d-1}\ge1$. Taking suprema over $P$ yields
$\M_d\le\Bee_{d-1}$ and $\M\le\Bee$.
\end{proof}

\section{$\Bee = \M$}\label{sec:B=M}
\begin{theorem}\label{thm:BM}
$\Bee=\M$.
\end{theorem}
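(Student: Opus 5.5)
Since $\M\le\Bee$ is Proposition~\ref{prop:poly-to-pair}, it remains to show $\Bee\le\M$. We fix an admissible pair $(W,B)$ with $B$ nonconstant and aim to produce polynomials $P\in\cP$ with $S(P)$ arbitrarily close to $1/B(0)$. By Lemma~\ref{lem:regularization} we may assume the pair is regular. So $W$ is univalent on a neighbourhood of $\cl\D$, and $F=WB$ satisfies $F'\ne0$ on $\cl\D$.

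The first step transfers the pair to the $w$-plane. Let $\Omega=W(\D)$, an analytic Jordan domain containing $0$, and let $h=B\circ W^{-1}$. This $h$ is holomorphic near $\cl\Omega$, with $|h|=1$ on $\partial\Omega$ and $h(0)=B(0)=:b$. Put $f(w)=w\,h(w)/b$. Then $f(0)=0$ and $f'(0)=1$. Moreover $f'(w)=F'(z)/(b\,W'(z))$ with $w=W(z)$, so $f$ is locally univalent near $\cl\Omega$. On $\partial\Omega$ we have $|f(w)/w|=1/b$.

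The second step is the key analytic construction, and it is what the intermediate constant $\cA$ should encode. We need a polynomial $P$ with $P(0)=0$, $P'(0)=1$, all of whose critical points $c$ satisfy $|P(c)/c|\ge 1/b-\varepsilon$. The plan follows the GPT-6 Astra scheme recalled in Section~\ref{sec:smale}.
\begin{enumerate}
\item Slightly enlarge $\Omega$ to a domain $\Omega'$ on which $f'\ne0$ still holds. Choose a thin closed annulus $U$ around $\partial\Omega$ inside $\Omega'$ on which $|f(w)/w|$ lies within $\varepsilon$ of $1/b$. By (T5), pick a polynomial $q$ of degree $m$ whose lemniscate $\Gamma=\{|q|=1\}$ lies in $\Int U$. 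By (T6), $|q|<1$ inside $\Gamma$, $|q|>1$ outside, and $q'\ne0$ on $\Gamma$.
\item By (T3), approximate $f$ by a polynomial $f_0$ on $\cl{\Omega'}$. Renormalise $f_0$ slightly so that $f_0(0)=0$ and $f_0'(0)=1$; the Cauchy estimates keep $f_0'$ close to $f'$, hence nonvanishing, on the closure of $\interior\Gamma\cup U$.
\item Set $P_n=f_0+\delta_n\,w^2 Q_n$, where $Q_n$ is a polynomial whose derivative is comparable to $q^{N}$ and the factor $w^2$ preserves the normalisation at $0$. One could also use $\int_0^w q^N$ minus a correction. Choose $N$ large and $\delta_n$ suitable.
\end{enumerate}
Inside $\Gamma$, away from a collar, the perturbation's derivative is exponentially small, so $P_n'\approx f_0'\ne0$. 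Outside $U$ it dominates, so Rouché-type arguments put no critical points outside the collar near $\Gamma\subset U$. In the collar, $P_n(c)=f_0(c)+(\text{small})$: at a critical point $f_0'(c)$ equals minus the perturbation's derivative, which by growth rates forces the perturbation itself to be small relative to $f_0$. This gives $|P_n(c)/c|\approx|f(c)/c|\ge 1/b-2\varepsilon$.

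The main obstacle is this last estimate. We must show that at critical points in the collar the added primitive term is negligible compared with $f_0$, even though its derivative is not negligible. I would try to handle it via the ratio $\int q^N/q^{N}=O(1/N)$ near $\Gamma$, which follows from $q'\ne0$ on $\Gamma$ and a local Laplace-type estimate. This gives $|P_n-f_0|\le C|f_0'|/N$ at critical points. Combined with the uniform lower bound on $|w|$ in the collar, this yields $S(P_n)\ge 1/b-O(\varepsilon+1/N)$. Letting $\varepsilon\to0$ and $N\to\infty$ gives $\M\ge 1/b$, and taking the supremum over admissible pairs gives $\Bee\le\M$. Since $N$ is arbitrary, this also produces admissible polynomials of arbitrarily large degree.
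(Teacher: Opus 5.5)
Your proposal is correct in outline and follows the paper's architecture. You regularise via Lemma~\ref{lem:regularization}, pass to $f(w)=wB(W^{-1}(w))/B(0)$, Runge-approximate, place a Hilbert lemniscate $\Gamma=\{|q|=1\}$ in a thin collar of $W(\T)$, and add a high power of $q$. The route differs only in the perturbation, which is exactly where your ``main obstacle'' sits.

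\textbf{How the paper avoids the obstacle.} The paper adds $wq^m$, not a primitive of $q^N$: it sets $H=P_n+wq^m$. Then $H(b)/b=P_n(b)/b+q(b)^m$, so the perturbation enters Smale's quotient exactly as $q(b)^m$. The critical-point equation $P_n'(b)+q(b)^{m-1}\bigl(q(b)+mbq'(b)\bigr)=0$ gives at once
\[
q(b)^m=-\frac{P_n'(b)\,q(b)}{q(b)+m\,b\,q'(b)}=O(1/m),
\]
because $bq'(b)$ is bounded away from $0$ on the collar. The extra factor $m$ produced by differentiating $q^m$ makes the estimate purely algebraic, and no integral ever appears.

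\textbf{Making your route rigorous.} Your primitive also works. The clean choice is $\int_0^w q^N$, renormalised by $1+q(0)^N$; the $w^2Q_n$ variant is unnecessary. But the bound $\int_0^c q^N=O(|q(c)|^N/N)$ needs more than a local Laplace estimate:
\begin{enumerate}
\item Choose $r'>r$ with $1-r'$ larger than all critical-value moduli of $q$ and than $|q(0)|$. Then $q'\ne0$ on $\{1-r'\le|q|\le1+r\}$, and $\{|q|\le1-r'\}$ is a single closed Jordan domain containing $0$.
\item Lift the segment $\{s\,q(c):s_1\le s\le1\}$, where $s_1|q(c)|=1-r'$, through $c$. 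The lift has uniformly bounded length because $|q'|$ is bounded below there, and it contributes at most $C|q(c)|^N/N$.
\item Join the lift's endpoint to $0$ inside $\{|q|\le1-r'\}$ by a path of bounded length. There the integrand is smaller by the factor $\bigl((1-r')/(1-r)\bigr)^N$.
\end{enumerate}
Since $|q(c)|^N\approx|f_0'(c)|$ is bounded at critical points, you get $|P(c)-f_0(c)|=O(1/N)$. On the unbounded region $\{|q|\ge1+r\}$, your ``domination'' must also be made uniform as $|w|\to\infty$. This follows from $\deg q^N\to\infty$ while $\deg f_0'$ is fixed, as in the paper's growth estimate.

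\textbf{Comparison.} Your route is closer to the original Astra construction and costs a path-lifting argument. The paper's choice of $wq^m$, tailored to the quotient $P(c)/c$, buys a two-line estimate. Both yield arbitrarily large degrees and adapt to the $S_k$ variant, since there one also has $|H(b)-P_n(b)|=|b\,q(b)^m|=O(1/m)$.
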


The above theorem will follow immediately from  Proposition \ref{prop:poly-to-pair} and the below Proposition \ref{prop:BM}.

\begin{proposition} \label{prop:BM}
For every admissible pair $(W,B)$ and every positive real number $A < \frac{1}{B(0)}$, there is a polynomial $P \in \mathcal{P}$ such that $S(P) > A$. Consequently $$\Bee \leq \M, \ \Bee_{d} \leq \M \quad (d \geq 0) $$
\end{proposition}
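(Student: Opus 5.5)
By Lemma~\ref{lem:regularization} I may assume that $(W,B)$ is regular; if $B\equiv1$ then $A<1$, and $P(w)=w-w^d/d$ already gives $S(P)=(d-1)/d>A$ for large $d$. Regularity means $W$ is univalent near $\cl\D$ and $F=WB$ satisfies $F'\ne0$ on $\cl\D$. Put $\Omega=W(\D)$, an analytic Jordan domain containing $0$. Define
$$
f:=F\circ W^{-1}=w\cdot (B\circ W^{-1})(w)\Big/ \text{(normalized)},
$$
that is, $f(w)=w\,h(w)$ with $h=B\circ W^{-1}$. Then $f$ is holomorphic and locally univalent on a neighbourhood $N$ of $\cl\Omega$, with $f(0)=0$ and $f'(0)=B(0)=:b$. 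On $\partial\Omega$ we have $|f(w)/w|=|h(w)|=1$. After dividing by $b$, the function $\tilde f=f/b\in$ ``$\mathcal P$-normalized'' satisfies $|\tilde f(w)/w|=1/b$ on $\partial\Omega$ and $|\tilde f(w)/w|<1/b$ in $\Omega$.

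The plan is the four-step scheme of \cite{Lean}, run on $\Omega$ instead of a fixed lemniscate. First, use (T5) on a thin closed annulus $U\subset N$ around $\partial\Omega$, and also apply it with $|\tilde f/w|$ in mind. Concretely, pick $\epsilon>0$ and let $U$ be the closed annulus $\{w\in N: 1/b-\epsilon\le|\tilde f(w)/w|\le 1/b+\epsilon\}$ near $\partial\Omega$. This is a genuine annulus for small $\epsilon$, because $\tilde f/w$ has no critical points on $\partial\Omega$: its derivative there is a nonzero multiple of $B'$ composed with $W^{-1}$, and $B'\ne0$ on $\T$. (T5) gives a polynomial $Q$ whose lemniscate $\Gamma=\{|Q|=1\}$ is an analytic Jordan curve in $\Int U$. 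By (T6), all zeros and critical points of $Q$ lie in $K:=\cl{\interior\Gamma}$.

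Second, by (T3), approximate $\tilde f$ by polynomials $p$ with $p(0)=0$ and $p'(0)=1$ (correct the approximant by a small affine term). The approximation is in $C^1$ on a slightly larger compact set $K'\supset K$ with connected complement. Then $p'\ne0$ on $K'$, and $|p(w)/w|>1/b-2\epsilon$ on $\Gamma$. For the latter, near $0$ use the maximum principle: $p(w)/w$ is the uniform limit of $\tilde f(w)/w$, since the division is controlled by Cauchy estimates.

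Third, set $P_n=p+\delta\int_0^w Q^{2n}(\zeta)\,d\zeta$, normalized so that $P_n'(0)=1$. Here $\delta$ is fixed and small, and if $Q(0)\ne0$ one renormalizes by $1+\delta Q(0)^{2n}$, which tends to $1$ since $|Q(0)|<1$. Write $P_n'=p'+\delta Q^{2n}$. On compact subsets of $\interior\Gamma$ the term $Q^{2n}\to0$, so $P_n'\ne0$ there, because $p'\ne0$ on $K$. Outside a small neighbourhood of $\Gamma$ we have $|Q|^{2n}\gg|p'|$ on any bounded set, and at infinity degree considerations apply. Hence all critical points of $P_n$ eventually lie in an arbitrarily thin collar $\{1-\eta<|Q|<1+\eta\}$ around $\Gamma$. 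At such points the quantity $|P_n(c)/c|$ is close to $|p(c)/c|$, because $\int_0^c Q^{2n}$ is uniformly small on the collar. The one subtlety is bounding $|\int_0^w Q^{2n}|$ by $C(1+\eta)^{2n}$ times a length. To handle it I integrate along a path inside $\interior\Gamma$ up to the collar, then shrink $\eta$ as $n$ grows, for example $\eta=1/n^2$, while the critical points still lie in the collar. Consequently $S(P_n)\ge 1/b-3\epsilon>A$ for large $n$ and small $\epsilon$.

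The main obstacle is the collar-confinement step together with the smallness of $\int_0^c Q^{2n}$ at the critical points. One must balance the width $\eta$ of the collar against $n$. The approach I would try first is the estimate $|p'|\ge m>0$ on $K'$, combined with the fact that the equation $\delta Q^{2n}=-p'$ forces $|Q(c)|^{2n}=|p'(c)|/\delta\in[m/\delta,M/\delta]$. This gives $|Q(c)|=1+O(1/n)$, so $|\int_0^c Q^{2n}|\le L\max_{\text{path}}|Q|^{2n}=O(1)$ times $\delta$. Choosing $\delta$ small relative to $\epsilon$ at the outset then closes the argument. Finally, $\Bee\le\M$ and $\Bee_d\le\M$ follow by taking suprema over pairs and over $A$.
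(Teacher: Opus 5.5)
Your reduction to a regular pair, the function $\tilde f=F\circ W^{-1}/b$, the lemniscate $\Gamma$, the Runge step and the confinement of the critical points of $P_n$ to a collar around $\Gamma$ are sound. They follow the same scheme as the paper; your annulus cut out by level sets of $|\tilde f/w|$ is a harmless variant of the paper's $W(\{1-\epsilon\le|z|\le1+\epsilon\})$.

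The gap is the last step: the claim that $\delta\int_0^cQ^{2n}$ is small at a critical point $c$. At such a point $\delta Q(c)^{2n}=-p'(c)$, so $\delta|Q(c)|^{2n}=|p'(c)|\ge m$ whatever $\delta$ is. Shrinking $\delta$ only moves the critical points outward, to where $|Q|^{2n}$ is larger by exactly the factor $1/\delta$. Hence your bound $\delta L\max_{\text{path}}|Q|^{2n}$ is at least $L|p'(c)|\ge|c|\,m$. The ``$O(1)$'' in your last paragraph is really of size $M/\delta$, and $m$ and $M\ge|p'(0)|=1$ are essentially fixed by $\tilde f$. So choosing $\delta$ small relative to $\epsilon$ closes nothing.

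The variant with $\eta=1/n^2$ fails for the same reason. For $\delta<m$, every critical point has $\log|Q(c)|\ge\frac1{2n}\log(m/\delta)$, so the critical points do not lie in a collar of width $1/n^2$. On a collar of width $C/n$ the crude bound $L(1+C/n)^{2n}\approx Le^{2C}$ is again of order one.

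What is missing is the gain of a factor $1/n$ from the concentration of $Q^{2n}$ at the endpoint of integration. Fix $\rho>0$ such that $Q'\ne0$ on the collar $\{1-\rho\le|Q|\le1+\rho\}$ and all critical values of $Q$ have modulus $<1-\rho$. First integrate from $0$ to the curve $\{|Q|=1-\rho\}$ inside $\{|Q|\le1-\rho\}$, along a path of bounded length $L'$. Then integrate along the lift through $Q$ of the radial segment from modulus $1-\rho$ to $Q(c)$. This gives
\[
\Bigl|\delta\int_0^cQ^{2n}\Bigr|\le\delta L'(1-\rho)^{2n}+\frac{\delta|Q(c)|^{2n+1}}{(2n+1)\min|Q'|}=\delta L'(1-\rho)^{2n}+\frac{|p'(c)|\,|Q(c)|}{(2n+1)\min|Q'|}=O\Bigl(\frac1n\Bigr),
\]
with the minimum taken over the collar. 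No smallness of $\delta$ is needed, and with this estimate your argument goes through.

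The paper avoids the path integral by adding $wq^m$ instead of a primitive. Since $H_{m,n}'=P_n'+q^{m-1}(q+mwq')$, at a critical point $b$ one has $q(b)^m=-P_n'(b)q(b)/(q(b)+mbq'(b))=O(1/m)$ directly, because $q'\ne0$ and $|b|$ is bounded below on the collar. This is the same factor $1/m$, obtained algebraically.
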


\begin{proof} Given an admissible pair $(W,B)$ with $0< A < \frac{1}{B(0)}$. If $B\equiv1$, then $A<1$ and
$P(w)=w-w^d/d$ gives $S(P)=(d-1)/d>A$ for all large $d$.
So we will assume $B$ is non-constant. 
For non-constant $B$, Lemma~\ref{lem:regularization} provides, for every $t<1$ sufficiently close to $1$, a regular admissible pair $(W_t,B_t)$ with $B_t(0)=B(0)/t$; choosing $t$ so close to $1$ that $t/B(0)>A$, we still have $A<1/B_t(0)$. Hence we will further assume that $(W,B)$ is regular.

We now show that $\Bee \leq \M$. 
The essential idea is to "invert" the procedure used to produce a (regular) admissible pair from a polynomial of Section \ref{sec:poly-to-pair}, i.e. given a regular admissible pair $(W,B)$ (with $B(0) > 0$ and $W'(0) > 0$ univalent, not necessarily Schlicht), we assume for the moment, that the pair $(W,B)$ was produced via the procedure of Section \ref{sec:poly-to-pair}, which means that $W$ is the (normalized) Riemann map fixing the origin from $\mathbb{D}$ to the component of $$\{ |\frac{P(z)}{z}| < \frac{1}{B(0)} \}$$ containing the origin, for some polynomial $P \in \mathcal{P}_{d}$ and $$B = \frac{P \circ W}{B(0)^{-1}W}$$

Our first task is to recover the polynomial $P$ from such a (regular) admissible pair. From the above representation we necessarily have $$P(z) = \frac{z \cdot B \circ W^{-1}(z)}{B(0)}$$

Motivated by this, for any regular admissible pair (with the normalizations above), we define $$\tilde{P}(z) \coloneq  \frac{z \cdot B \circ W^{-1}(z)}{B(0)}
.$$

One can check directly from the above definition and the regularity of $(W,B)$ that $\tilde{P}'$ is nowhere vanishing in a neighbourhood of $W(\overline{\mathbb{D}})$, and that $\tilde{P}(0) = 0$ and $\tilde{P}'(0) = 1$. Moreover, $$|\frac{\tilde{P}(z)}{z}|$$ can be made arbitrarily close to $$\frac{1}{|B(0)|}$$ on sufficiently thin annular (i.e. doubly connected) neighbourhoods of the analytic Jordan curve $W(\partial \mathbb{D})$ bounded by the analytic Jordan curves $$W((1 - \epsilon) \partial \mathbb{D})$$ and $$W((1+ \epsilon) \partial \mathbb{D})$$ where $\epsilon > 0$ is chosen sufficiently small. We will refer to such an annular neighbourhood as an $\epsilon$-annular neighbourhood of $W(\partial \mathbb{D})$. If we now knew that $\tilde{P}$ (or an appropriate modification of $\tilde{P}$) were a polynomial, and that all of $\tilde{P}$'s critical points (or the critical points of an appropriate modification of $\tilde{P}$) were contained in the above annular neighbourhood of $$W(\partial \mathbb{D})$$ then this would be enough to recover $$\Bee \leq \M$$

This brings us to the main two issues. The first is that $\tilde{P}$ \textbf{need not be a polynomial} (since we have started from an arbitrary regular pair $(W,B)$ which need not be produced from the procedure of Section \ref{sec:poly-to-pair}), and the second is that even if we knew that $\tilde{P}$ were a polynomial, we have no control over how close its critical points lie to $W( \partial \mathbb{D})$. We now proceed to resolve both of these issues.

The first issue is resolved essentially immediately via Runge approximation of $\tilde{P}$ on the compact set $K=W(\cl{D(0,1+2\delta)})$, i.e. the closure of the Jordan domain bounded by the analytic Jordan curve $W((1 + 2\delta)\partial \mathbb{D})$, whose complement is connected. More precisely, Runge's theorem (T3) produces a sequence of polynomials $P_n : n = 1,2, \ldots$ which converge to $\tilde{P}$ in the supremum norm on $K$. It follows that  $P_n(0)\to\tilde P(0)=0$ and $P_n'(0)\to\tilde P'(0)=1$.  Replacing $P_n$ by $(P_n-P_n(0))/P_n'(0)$ we may assume without loss of generality that they satisfy $$P_n(0) = 0, \qquad P_n'(0) = 1$$ for each $n = 1,2, \ldots$. Moreover, since $$\tilde{P}'$$ is nowhere vanishing in a neighbourhood of $$W(\overline{\mathbb{D}})$$ By (T3) and the Hurwitz's theorem, for all $n$ sufficiently large, we have $P_n'$ is nowhere vanishing in the closure of some fixed neighbourhood of $W(\overline{\mathbb{D}})$, bounded by the analytic Jordan curve $$W((1 + \delta)\partial \mathbb{D}) $$ By taking $n$ sufficiently large, and $0 < \epsilon < \delta$ sufficiently small, we then have that $$|\frac{P_n(z)}{z}|$$ can be made arbitrarily close to $$\frac{1}{B(0)}$$ on the $\epsilon$-annular neighbourhood $N$ of $W(\partial \mathbb{D})$. Since $A<1/B(0)$, we may assume that 
\begin{equation}\label{eq:annN}
\Bigl|\frac{P_n(z)}{z}\Bigr|>A + c\qquad\text{for all }z\in N
\end{equation}
and $P_n'\neq0$ on the closed Jordan domain bounded by $W((1+\delta)\partial\mathbb{D})$, and $c > 0$ is fixed.

We now proceed to resolve the second issue via a modification of $P_n$ based on the original GPT-6 Astra construction. Let $q$ be a polynomial such that $\Gamma = \{ |q(z)| = 1 \}$ is an analytic Jordan curve contained in $N_\epsilon$, the interior of the annular region bounded by $W((1 - \epsilon) \partial \mathbb{D})$ and $W((1 + \epsilon) \partial \mathbb{D})$ and separating its boundary components (such a polynomial exists by Hilbert's lemniscate theorem (T5), applied to $\cl{N_\epsilon}$). The connected compact set $W(\cl{D(0,1-\epsilon)})$ is disjoint from $\Gamma$ and contains the inner boundary curve $W((1-\epsilon)\partial\mathbb D)$, which $\Gamma$ separates from $W((1+\epsilon)\partial\mathbb D)$; hence $W(\cl{D(0,1-\epsilon)})\subset\operatorname{int}\Gamma$ and in particular $0\in\operatorname{int}\Gamma$. By (T6), all zeros and critical points of $q$ lie in $\operatorname{int}\Gamma$, $|q|<1$ on $\operatorname{int}\Gamma$ and $|q|>1$ on $\operatorname{ext}\Gamma$. In particular $|q(0)|<1$ and all of $q$'s critical values have modulus strictly less than $1$. Let $T<1$ denote the maximum modulus of a critical value of $q$ ($T=0$ if $\deg q=1$). We now define $$H_{m,n}(z) = P_n(z) + z q^{m}(z),\qquad m =1,2,\ldots$$ Fix $r\in(0,1-T)$ so small that
$$
\{z:\ 1-r\le|q(z)|\le1+r\}\subset N_\epsilon 
$$
and all the critical values of $q$ have modulus less than $1-r$. 

We claim that all the critical points of $H_{m,n}$ lie in the annular region $\{1-r<|q|<1+r\}$ bounded by $$\Gamma^{-}_{r} = \{ |q(z)| = 1 - r \}$$ and $$\Gamma^{+}_{r} = \{ |q(z)| = 1 + r \}$$ given $m$ is sufficiently large.

Suppose for the sake of a contradiction, this were not the case, then (after passing to a subsequence and suppressing the relabeling) we may assume without loss of generality that either there exists a sequence of points $b_{m} \in \mathrm{int} \ \Gamma_{r}^{-} : m = 1,2, \ldots$ for which $$H_{m,n}'(b_m) = P_n'(b_m) + q^{m}(b_m) + b_m m q^{m-1}(b_m) q'(b_m) = 0$$ or there exists a sequence of points $b_m \in \mathrm{ext} \ \Gamma_{r}^{+}$ for which $$H_{m,n}'(b_m) = P_n'(b_m) + q^{m}(b_m) + b_m m q^{m-1}(b_m) q'(b_m) = 0$$

In the former case, we may assume without loss of generality (after passing to a subsequence), that $$b_m \rightarrow b \in \overline {\mathrm{int} \ \Gamma^{-}_{r}}$$ and necessarily then $P_n'(b) = 0$. This yields a contradiction as $P_n'$ is nowhere vanishing in the closure of the Jordan domain bounded by $W((1 + \delta) \partial \mathbb{D})$, and $$b \in \overline {\mathrm{int} \ \Gamma^{-}_{r}} \subset \mathrm{int} \ W ((1 + \epsilon) \partial \mathbb{D}) \subset \mathrm{int} \ W( (1 + \delta) \partial \mathbb{D})$$

In the latter case, we may assume without loss of generality (after passing to a subsequence), that $$b_m \rightarrow b \in \overline{\mathrm{ext} \ \Gamma^{+}_{r}}$$ Suppose $b \neq \infty$. Then $$|P_n'(b_m)| =|q^{m-1}(b_m)| |m b_m q'(b_m) + q(b_m)| \geq (1+r)^{m-1} |m b_m q'(b_m) + q(b_m)|$$

On the other hand, since $q'(b) \neq 0$, as all zeros and critical points of $q$ lie in the interior of $\Gamma$, and the interior of $\Gamma$ contains the origin (so in particular, $b \neq 0$) we conclude that $$|m b_m q'(b_m) + q(b_m)| \sim |m b q'(b)|$$ as $m \rightarrow \infty$, so that $$|P_n'(b_m)| \gtrsim (1+r)^{m-1} m |b| q'(b) $$ as $m \rightarrow \infty$, and in particular $$P_n'(b) = \infty$$ Since $b$ is a finite point and $P_n'$ is a polynomial, this yields a contradiction. We conclude we must have had $b = \infty$, i.e. $b_{m} \rightarrow \infty$, and $$A|b_m|^{k} \sim |P_n'(b_m)| = |q(b_m)|^{m-1} |m b_m q'(b_m) + q(b_m)|$$ as $m \rightarrow \infty$ where $A$ is the modulus of the leading coefficient of $P_n'$ and $k = \deg(P_n) - 1$. Note that $A$ and $k$ are \textbf{fixed}. Now we have that $$|q(z)| \geq |z|^{\frac{d}{2}}$$ and $$|m z q'(z) + q(z)| \geq C_1 m |z|^d$$ for all $|z| \geq R$, where $C_1>0$ is a fixed positive constant, $d = \deg(q)$ and $R$ is chosen sufficiently large. The second inequality follows from the fact that $$mzq'(z) + q(z) = (md+1)a_d z^d + (m(d-1) + 1)a_{d-1} z^{d-1} + \ldots + (m + 1)a_1 z + a_0$$ where $q(z) = a_dz^d + \ldots + a_1z + a_0$, so that $$|mzq'(z) + q(z)| \geq m \frac{d|a_d|}{2} |z|^d$$ whenever $|z|$ is sufficiently large (independently of $m$), hence we may take $C_1 = \frac{d |a_d|}{2}$. Thus $$|P_n'(b_m)| =  |q(b_m)|^{m-1} |m b_m q'(b_m) + q(b_m)| \geq C_2 |b_m|^{\frac{d(m+1)}{2}} $$ for some fixed positive constant $C_2 > 0$ as $m \rightarrow \infty$ and we conclude that $$A |b_m|^{k} \gtrsim  C_2 |b_m|^{\frac{d(m+1)}{2}}$$ as $m \rightarrow \infty$, which yields a contradiction since $A,k, C_2$ are fixed positive constants, $d \geq 1$, and $b_m \rightarrow \infty$ as $m \rightarrow \infty$. We have thus proved that all critical points of $H_{m,n}$ lie in the annular neighbourhood bounded by the analytic Jordan curves $\Gamma_{r}^{-}$ and $\Gamma_{r}^{+}$ so that in particular, all critical points of $H_{m,n}$ lie in (a compact subset of) the annular neighbourhood bounded by $W( (1 - \epsilon) \partial \mathbb{D})$ and $W( (1 + \epsilon) \partial \mathbb{D})$ given $m$ is chosen sufficiently large.

Similar considerations show that at a critical point $b \in Z(H_{m,n}')$ of $H_{m,n}$, we have $$q^{m}(b)=\frac{- P_n'(b)q(b)}{mbq'(b) + q(b)} = O(\frac{1}{m})$$ as $m \rightarrow \infty$. In particular, $$|\frac{H_{m,n}(b)}{b}| \ge |\frac{P_n(b)}{b}| - O(\frac{1}{m}) > A+c - O(\frac{1}{m})$$ at every critical point $b \in Z(H_{m,n}')$, for all $m$ sufficiently large. 

Rescaling $H_{m,n}$ via $$H_{m,n} \rightarrow \frac{H_{m,n}}{H_{m,n}'(0)}$$ where $$H_{m,n}'(0)=1+q(0)^m = 1 + o(1)$$ and  $$o(1) \rightarrow_{m \rightarrow \infty} 0$$ so that the resulting polynomial has derivative at the origin equal to $1$, and suppressing renaming the resulting rescaled polynomial, our resulting (rescaled) polynomial $H_{m,n}$ has $S(H_{m,n})> A$ for $m$ sufficiently large, and this resolves the second issue. 

We have thus shown that for every admissible pair $(W,B)$ and every positive real number $A < \frac{1}{B(0)}$, there is a polynomial $P \in \mathcal{P}$ satisfying $S(P) > A$. Thus $\Bee\le\M$. The inequality $\Bee_d\le\M$ follows from $\Bee_{d} \leq \Bee$ and this concludes the proof.
\end{proof}

Note that in the above proof, we only used that $\tilde{P}$ is holomorphic in a neighbourhood of the analytic Jordan domain $W(\overline{\mathbb{D}})$, and that $\tilde{P}(0) = 0$, $\tilde{P}'(0) = 1$ and $\tilde{P}'$ is nowhere vanishing in a neighbourhood of the analytic Jordan domain $W(\overline{\mathbb{D}})$, together with the inequality $|\tilde P(w)/w|>A$ on $W(\partial\mathbb D)$. By the same argument we thereby obtain the following result.

\begin{proposition}\label{prop:completion}
Let $\Omega$ be a bounded analytic Jordan domain containing $0$.
Suppose $f$ is holomorphic on a neighbourhood of $\cl\Omega$,
$$
f(0)=0,\qquad f'(0)=1,\qquad f'\ne0\text{ on }\cl\Omega,
$$
and, for some $A>0$,
$$
\left|\frac{f(w)}{w}\right|>A
\qquad(w\in\partial\Omega).
$$
Then there is a polynomial $P\in\cP$ with $S(P)>A$. More precisely, one can choose polynomials $p,q$, independent
of $n$, such that
$$
P_n(w)=\frac{p(w)+wq(w)^n}{1+q(0)^n}
$$
has $S(P_n)>A$ for all sufficiently large $n$.
If $k=\deg p$ and $m=\deg q$, then
$\deg P_n=mn+1$ whenever $mn+1>k$.
\end{proposition}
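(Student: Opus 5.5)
The plan is to rerun the proof of Proposition~\ref{prop:BM}, with $\tilde P$ replaced by $f$ and $W(\partial\D)$ replaced by $\partial\Omega$, while keeping track of the explicit form and degree of the polynomials produced.

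\textbf{Collars around $\partial\Omega$.} By (T2) the Riemann map $\phi:\D\to\Omega$ with $\phi(0)=0$ extends conformally to a disc $D(0,1+3\delta)$. For $\rho$ near $1$ the curves $\phi(\rho\T)$ are analytic Jordan curves. Since $f$ is holomorphic near $\cl\Omega$ and $f'\ne0$ on the compact set $\cl\Omega$, we may shrink $\delta$ so that $f$ is holomorphic and $f'\ne0$ on $K=\phi(\cl{D(0,1+2\delta)})$. The function $|f(w)/w|$ is continuous on the annulus $\phi(\{1-\epsilon\le|z|\le1+\epsilon\})$ and exceeds $A$ on $\partial\Omega$. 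By compactness we can choose $0<\epsilon<\delta$ and $c>0$ with $|f(w)/w|>A+2c$ on the closed collar $\cl N$, where $N$ is the open collar between $\phi((1\pm\epsilon)\T)$.

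\textbf{Choosing $p$ and $q$.} The set $K$ has connected complement, so Runge's theorem (T3) gives polynomials approximating $f$ uniformly on $K$. By Cauchy estimates the derivatives converge on $\phi(\cl{D(0,1+\delta)})$. Normalising as in Proposition~\ref{prop:BM}, we fix one polynomial $p$ with the following properties:
\begin{itemize}
\item $p(0)=0$ and $p'(0)=1$;
\item $p'\ne0$ on $\phi(\cl{D(0,1+\delta)})$, by Hurwitz's theorem and the uniform convergence of the derivatives;
\item $|p(w)/w|>A+c$ on $\cl N$. Near $0$ the quotient is controlled because $0\notin\cl N$.
\end{itemize}
Next, (T5) applied to $\cl N$ gives a polynomial $q$ whose lemniscate $\Gamma=\{|q|=1\}$ lies in $N$ and separates its boundary curves. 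By (T6), all zeros and critical points of $q$ lie in $\interior\Gamma$, which contains $0$. Both $p$ and $q$ are now fixed, independent of $n$.

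\textbf{Locating the critical points.} The argument for $H_{m,n}$ in Proposition~\ref{prop:BM} now applies verbatim to $H_n=p+wq^n$. For large $n$, every critical point of $H_n$ lies in $\{1-r<|q|<1+r\}\subset N$. Three cases are excluded:
\begin{itemize}
\item limit points inside $\Gamma_r^-$ would be zeros of $p'$;
\item finite limit points outside $\Gamma_r^+$ would force $|p'|$ to be unbounded there;
\item critical points escaping to $\infty$ contradict the growth comparison of $|p'|\sim A'|w|^{k-1}$ with $|q|^{n-1}|nwq'+q|$.
\end{itemize}
At a critical point $b$ we have $q(b)^n=-p'(b)q(b)/(nbq'(b)+q(b))=O(1/n)$, uniformly, since $|bq'(b)|$ is bounded below on the compact closure of that annulus. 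Hence $|H_n(b)/b|>A+c-O(1/n)$. Dividing by $H_n'(0)=1+q(0)^n\to1$, using $|q(0)|<1$, gives $S(P_n)>A$ for all large $n$.

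\textbf{Degree count.} The term $wq^n$ has degree $mn+1$ and leading coefficient $a_m^n\ne0$. If $mn+1>k$, nothing in $p$ cancels it, so $\deg P_n=mn+1$.

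The main obstacle is the uniformity in the growth/compactness step. That step needs only fixed $p$ and $q$ and $n\to\infty$, which is exactly why $p$ and $q$ are chosen once, before $n$ varies. The remaining steps are bookkeeping already carried out in Proposition~\ref{prop:BM}.
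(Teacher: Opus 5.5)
Your proposal is correct and follows the paper's own route. The paper proves this proposition by noting that the proof of Proposition~\ref{prop:BM} used only the listed properties of $\tilde P$. You rerun exactly that argument, with collars from the extended Riemann map of $\Omega$: Runge approximation to fix $p$, a Hilbert lemniscate polynomial $q$, critical points of $p+wq^n$ forced into the collar, the estimate $q(b)^n=O(1/n)$, and normalisation by $1+q(0)^n$. You also supply the elementary degree count, which the paper leaves implicit.
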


\subsection{$\mathcal{A}=\M$}\label{sec:AM}

Proposition~\ref{prop:completion} suggests one may consider another reformulation of Smale's mean value constant $\M$ which we detail in this section.

In this section, a pair $(f, \Omega)$ will be called \textbf{admissible} if $\Omega$ is a bounded analytic Jordan domain containing the origin in its interior, and $f$ is a holomorphic function, locally univalent on a neighbourhood of $\overline{\Omega}$, fixing the origin, with unit derivative at the origin.
Set
$$
\cA:=\sup\{\inf_{z \in \partial \Omega} |\frac{f(z)}{z}|:(f, \Omega) \text{ is admissible}\}.
$$
Since the pair $(z, \mathbb{D})$ is clearly admissible, $\mathcal{A} \geq 1$.
Proposition~\ref{prop:completion} is exactly the statement that for each admissible pair $(f, \Omega)$ and each $0<A<\inf_{z\in\partial\Omega}|f(z)/z|$, there is a corresponding polynomial $P \in \mathcal{P}$ with $S(P) > A$ and hence $$\M\ \ge\ \inf_{z \in \partial\Omega} \Bigl|\frac{f(z)}{z}\Bigr|.$$ This motivates the next theorem. 

\begin{theorem}\label{thm:AM}
$$\mathcal{A} = \M$$
\end{theorem}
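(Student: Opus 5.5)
The plan is to prove the two inequalities $\cA\le\M$ and $\M\le\cA$ separately.

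\emph{The inequality $\cA\le\M$.} This is already contained in Proposition~\ref{prop:completion}. Let $(f,\Omega)$ be admissible and fix $0<A<\inf_{\partial\Omega}|f(z)/z|$. Since $\partial\Omega$ is compact and $f(z)/z$ is continuous and nonvanishing there, we have $|f(w)/w|>A$ on $\partial\Omega$. Local univalence of $f$ on a neighbourhood of $\cl\Omega$ gives $f'\ne0$ on $\cl\Omega$, and we have $f(0)=0$, $f'(0)=1$. Proposition~\ref{prop:completion} then produces $P\in\cP$ with $S(P)>A$. Letting $A$ increase to the infimum and then taking the supremum over all pairs gives $\cA\le\M$.

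\emph{The inequality $\M\le\cA$.} Here I would mimic Proposition~\ref{prop:poly-to-pair}. Fix $P\in\cP$ with $S(P)>1$; otherwise $S(P)\le1\le\cA$ and there is nothing to prove. Let $q=P(w)/w$. The goal is to show, for each $1<R<S(P)$, that $R\le\cA$.

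\begin{enumerate}
\item Take $\Omega=\Omega_R(P)$, the component of $\{|q|<R\}$ containing $0$. As shown in the proof of Proposition~\ref{prop:poly-to-pair}, it is a bounded simply connected domain on whose closure $P'\ne0$. Indeed, a critical point $c$ has $|q(c)|\ge S(P)>R$, so it is not in $\{|q|\le R\}\supset\cl\Omega$.
\item Since $\{|q|\le R\}$ is compact and contains no critical point of $P$, the function $P'$ is nonzero on a neighbourhood of $\cl\Omega$. Thus $f=P$ is locally univalent there, with $f(0)=0$ and $f'(0)=1$.
\item On $\partial\Omega$ we have $|f(w)/w|=|q(w)|=R$.
\item The remaining point is that $\Omega$ must be an \emph{analytic Jordan domain}. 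The curve $\partial\Omega\subset\{|q|=R\}$ could fail to be analytic only at critical points of $q$ on the level set. These need not be excluded by the hypothesis, since critical points of $q$ differ from those of $P$.
\end{enumerate}

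To deal with step 4 I would shrink the domain, so that the boundary problem costs only an arbitrarily small loss. The finitely many critical values of $q$ are fixed, so choose $R'$ with $1<R'<R$ such that no critical value of $q$ has modulus $R'$. Then $\{|q|=R'\}$ is a finite union of disjoint analytic Jordan curves. The component $\Omega'$ of $\{|q|<R'\}$ containing $0$ is simply connected by the maximum-principle argument, so its boundary is a single one of these curves, and $\Omega'$ is an analytic Jordan domain. Moreover $\cl{\Omega'}\subset\{|q|\le R'\}$ avoids all critical points of $P$. Hence $(P,\Omega')$ is admissible with $\inf_{\partial\Omega'}|P(w)/w|=R'$.

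Letting $R'\uparrow R\uparrow S(P)$ gives $S(P)\le\cA$, and so $\M\le\cA$. Combined with the first part, $\cA=\M$.

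The main obstacle is the regularity of the boundary in step 4. The simple connectivity of $\Omega'$, which ensures its boundary is a single Jordan curve, also needs checking. This is handled as in Proposition~\ref{prop:poly-to-pair}: the interior of a Jordan curve in $\Omega'$ lies in $\{|q|<R'\}$ and hence in $\Omega'$. A bounded simply connected domain whose boundary is contained in finitely many disjoint analytic Jordan curves is then bounded by exactly one of them.
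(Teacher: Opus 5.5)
Your proposal is correct and follows essentially the paper's own proof: $\cA\le\M$ comes directly from Proposition~\ref{prop:completion}, and $\M\le\cA$ takes $f=P$ with $\Omega$ the component containing $0$ of a sublevel set $\{|P(w)/w|<t\}$, where the level $t\in(1,S(P))$ avoids the moduli of the critical values of $P(w)/w$, and then lets $t\uparrow S(P)$. The only difference is that your intermediate level $R$ is redundant, since you can choose $R'$ directly in $(1,S(P))$ as the paper does.
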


\begin{proof}
Proposition \ref{prop:completion} immediately implies that $\mathcal{A} \leq \M$. To prove $\mathcal{A} \geq \M$ we may assume without loss of generality that $\M > 1$, for if $\M \leq 1$, then $\M \leq 1 \leq \mathcal{A}$ so there is nothing to prove. Let $P \in \mathcal{P}$ be any polynomial for which $S(P) > 1$. Then one may set $f = P$ and let $\Omega$ be the component of the sublevel set $$\{ |\frac{P(z)}{z}| < t \}$$ containing the origin, where $1< t < S(P)$ is any value which is not equal to a modulus of the (finitely many) critical values of the polynomial $$Q(z) \coloneq \frac{P(z)}{z}.$$ Then $Q'\neq0$ on the level set $\{|Q|=t\}$, which is therefore a finite union of disjoint analytic Jordan curves. As in the proof of Proposition~\ref{prop:poly-to-pair}, $\Omega$ is bounded and simply connected, and $\partial\Omega\subset\{|Q|=t\}$.So $\Omega$ is an analytic Jordan domain containing the origin and $|Q|\le t<S(P)$ on $\cl\Omega$. On the other hand, $|Q(c)|\ge S(P)$ at every critical point $c$ of $P$ and therefore $P'\neq0$ on the compact set $\cl\Omega$ and, by continuity, on a neighbourhood of $\cl\Omega$. It follows that $(f,\Omega)$ is an admissible pair with $$\inf_{z \in \partial \Omega} |\frac{P(z)}{z}| = \inf_{z \in \partial \Omega} |Q(z)| = t$$

Letting $t \uparrow S(P)$ (through values avoiding the finitely many excluded moduli) , we find that $\mathcal{A} \geq S(P)$, and since $P \in \mathcal{P}$ was arbitrary with $S(P) > 1$, taking the supremum of $S(P)$ over $P \in \mathcal{P}$ with $S(P) > 1$ gives us $$\mathcal{A} \geq \M$$
\end{proof}

\begin{remark}
Theorem~\ref{thm:AM} says that Smale's extremal problem does not distinguish between polynomials and arbitrary locally univalent holomorphic functions. In spirit this is an algebraization result of Chow's Lemma type: the analytic category of admissible pairs may be replaced by the algebraic category of polynomials
\end{remark}

Combined with Proposition~\ref{prop:BM} and Proposition~\ref{prop:poly-to-pair}, this gives
$$
\Bee=\M=\cA.
$$
%%%%%%%%%%%%%%

\begin{corollary}\label{cor:infinitely}
For every $A<\M$ there are infinitely many $d$ with $\M_d>A$. We also have $\M_d>1.1018$ for all large $d$.
\end{corollary}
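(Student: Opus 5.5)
Both assertions will follow from Proposition~\ref{prop:completion}, using its explicit family $P_n=(p+wq^n)/(1+q(0)^n)$ of degree $mn+1$.

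For the first assertion, fix $A<\M$. Choose $A'$ with $A<A'<\M$; by Theorem~\ref{thm:AM} ($\cA=\M$) there is an admissible pair $(f,\Omega)$ with $\inf_{\partial\Omega}|f(w)/w|>A'$. If $A<1$ the claim is immediate from Smale's example $w-w^d/d$, since $(d-1)/d>A$ for all large $d$; so assume $A\ge1$. Proposition~\ref{prop:completion}, applied with constant $A$, gives fixed polynomials $p,q$ with $m=\deg q\ge1$ such that $S(P_n)>A$ for all large $n$. Here $\deg P_n=mn+1$ once $mn+1>\deg p$, and these degrees are pairwise distinct and unbounded. Since $\M_{mn+1}\ge S(P_n)>A$, infinitely many $d$ satisfy $\M_d>A$.

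For the second assertion, \emph{all} large $d$ must be covered, not just an arithmetic progression $mn+1$. One way is to force $m=1$. Alternatively, keep $q$ and pad the degree: for each residue $j\in\{0,\dots,m-1\}$, replace $wq^n$ by $wq^n\cdot s_j$, where $s_j$ has degree $j$. A simpler option is to multiply by $(1+\eta w)^j$ with tiny $\eta$, close to $1$ on a neighbourhood of $\cl\Omega$, and then rerun the critical-point localisation.

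The padding route is the main obstacle. Outside $\Gamma^+_r$ the dominant term is now $m_\ast\,w q^{n}q'(1+\eta w)^j$ plus lower-order pieces, and one must check that no critical points escape to infinity or appear near $-1/\eta$. Because $\eta$ is fixed before $n\to\infty$, and $-1/\eta$ lies far outside the lemniscate where $|q|^n$ dominates, I expect the same growth argument to apply unchanged, provided $\eta$ is small enough that $(1+\eta w)^j$ has no zeros in a neighbourhood of the closed disc bounded by $W((1+\delta)\partial\D)$.

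The cleaner fix is to realise the lemniscate with $q$ of degree $m$ but use exponents $n$ and combinations $wq^n\cdot q_1$, where $q_1$ ranges over fixed polynomials of degrees $0,\dots,m-1$ with no zeros near $\Gamma$. This yields degrees $mn+1+j$, covering all large integers. Finally, by Theorem~\ref{thm:upperN}, $\Bee_1\ge\sqrt{8081/6656}>1.1018$, so $\M=\Bee>1.1018$. Applying the above with $A=1.1018$ and all residues $j$ then gives $\M_d>1.1018$ for every sufficiently large $d$.
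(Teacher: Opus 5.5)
Your first assertion is correct and follows the paper's argument. The gap is in the second assertion.

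\textbf{How the paper covers all large $d$.} The paper does not modify the construction. It takes the progression of degrees $mn+1$ and fills in the other degrees with the monotonicity $\M_{d+1}\ge\M_d$, cited from Crane. This also follows as in Remark~\ref{remove}(a): for $P\in\cP_d$, the polynomial $P+\eta w^{d+1}$ has $d-1$ critical points converging to those of $P$, and one critical point $c_\eta\to\infty$ with $|P_\eta(c_\eta)/c_\eta|\to\infty$.

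\textbf{Where your padding step fails.} You only assert it (``I expect \dots''), and the heuristic behind it is wrong. Write $p$ for the fixed Runge polynomial, $q$ for the lemniscate polynomial and $n$ for the exponent. For $H=p+wq^nq_1$ one has
\[
H'=p'+q^{n-1}\bigl[q_1\,(q+nwq')+wq\,q_1'\bigr].
\]
Suppose $q_1$ has a zero $w_0$ of multiplicity $\mu$ outside $\Gamma^+_r$. Then the bracket vanishes to order exactly $\mu-1$ at $w_0$, since $\mu w_0q(w_0)\ne0$, and it has one further zero at distance $O(1/n)$. On a small fixed circle about $w_0$, $|q|^{n-1}$ swamps $|p'|$, so Rouch\'e's theorem shows that $H'$ has exactly $\mu$ zeros near $w_0$.

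Hence, for $(1+\eta w)^j$, critical points \emph{do} appear near $-1/\eta$, and they lie outside the collar. When $j\ge2$, exactly $j-1$ of them have $1+\eta b\to0$ exponentially fast in $n$. For these, $|q(b)|^n|1+\eta b|^{j}\approx|p'(b)|\,|1+\eta b|/(j\eta|b|)\to0$, so $H(b)/b\to p(w_0)/w_0$. The Runge approximation controls $p$ only on $K$, so it gives no information about this value.

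The same objection applies to your ``cleaner fix'': requiring that $q_1$ have no zeros near $\Gamma$ is not a sufficient hypothesis. Forcing $m=1$ is not available either, since a circle need not fit inside the collar $N_\epsilon$.

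\textbf{How to repair it.} Take $q_1(w)=w^j$ for $1\le j\le m-1$, and choose $r<1-|q(0)|$.
\begin{itemize}
\item The only zero of $q_1$ then lies in $\Int\Gamma^-_r$. There $q^{n-1}[\cdots]\to0$ uniformly and $p'$ dominates.
\item On the closed collar and outside $\Gamma^+_r$ one has $q_1\ne0$. So the bracket equals $w^j\bigl[(1+j)q+nwq'\bigr]=nw^{j+1}q'\bigl(1+O(1/n)\bigr)$, with leading coefficient $(1+j+nm)$ times that of $q$.
\item The three-region argument in the proof of Proposition~\ref{prop:BM} therefore goes through verbatim. It gives $S(H)>A$ in every degree $mn+1+j$, with $H'(0)=1$ automatically.
\end{itemize}
Alternatively, with $(1+\eta w)^j$ you could choose $\eta$, after $p$ is fixed, so small that $|p(w)/w|>A$ near $-1/\eta$; this uses $\deg p\ge2$. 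Either repair, or simply quoting $\M_{d+1}\ge\M_d$ as the paper does, is needed to conclude $\M_d>1.1018$ for all large $d$.
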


\begin{proof}
Since $\cA=\M>A$ there is an admissible $(f,\Omega)$ with $\inf_{\partial\Omega}|f(w)/w|>A$. Proposition~\ref{prop:completion} produces polynomials $P_n\in\cP$ with $S(P_n)>A$ and $\deg P_n=mn+1\to\infty$ (for all sufficiently large $n$, where $m=\deg q\ge1$). By Theorem~\ref{thm:lower} below, $\M>1.1018586788\ldots$. So we can take $A=1.1018586788\ldots$ and hence $\M_{mn+1}\ge 1.1018586788\ldots$ for all large $m$. Since $\M_{d+1}\ge \M_{d}$ \cite{Crane1}, we  have $\M_d> 1.1018$  for all large $d$.
\end{proof}
%%%%%%%%%%%%%%%%

\section{Smale's Problems 1A and 1B}\label{sec:smale1A}

\subsection{Smale's constants $\K_{d,k}$, $\K_k$ and $\K$}\label{sec:Kdef}
In Smale's studies of the efficiency and complexity of Newton's method \cite{Smale}, 
the analysis of one step $w\mapsto w-hP(w)/P'(w)$ of the modified Newton iteration in \cite[Part~II, \S\S1--3]{Smale} is carried out in the target space of $P$, and it rests on estimates of the critical values of $P$ in terms of the Taylor coefficients of $P$ at the current point. Translating the variable and the value, one may assume that the current point is the origin and that $P(0)=0$. Smale then introduced and estimated the following quantities. For $P\in\cP$ and an integer $k\ge2$ let
\begin{equation}\label{eq:Sk}
S_k(P):=\min_{P'(c)=0}\Bigl|\frac{P^{(k)}(0)}{k!}\Bigr|^{\frac1{k-1}}|P(c)|
=|\alpha_k|^{\frac1{k-1}}\min_{P'(c)=0}|P(c)|,\qquad P(w)=w+\sum_{j\ge2}\alpha_jw^j,
\end{equation}
so that $S_k(P)=0$ when $k>\deg P$, and put
$$
\K_{d,k}:=\sup_{P\in\cP_d}S_k(P),\qquad
\K_k:=\sup_{P\in\cP}S_k(P)=\sup_{d\ge2}\K_{d,k},\qquad
\K:=\sup_{k\ge2}\K_k .
$$
For a polynomial $P$ with $P(0)=0$ and $P'(0)\ne0$ which is not normalized, the corresponding quantity is
\begin{equation}\label{eq:Skgen}
\min_{P'(c)=0}\Bigl|\frac{P^{(k)}(0)}{k!\,P'(0)^k}\Bigr|^{\frac1{k-1}}|P(c)| ,
\end{equation}
which is invariant under $P(w)\mapsto\alpha P(\beta w)$ ($\alpha\beta\ne0$) and coincides with $S_k(P)$ for $P\in\cP$. Applying \eqref{eq:Skgen} to $w\mapsto P(a+w)-P(a)$, one sees that $\K$ is the least constant with the following property: for every nonlinear polynomial $P$ and every non-critical point $a$ of $P$ there is a critical point $c$ of $P$ such that for every $k\ge2$
\begin{equation}\label{eq:SmaleThm1}
\Bigl|\frac{P^{(k)}(a)}{k!\,P'(a)}\Bigr|^{\frac1{k-1}}|P(a)-P(c)|\le\K\,|P'(a)| .
\end{equation}
In the same way $\K_{d,k}$ is the least constant in \eqref{eq:SmaleThm1} for a fixed $k$ and polynomials of degree $d$. For $k=2$ one has $\alpha_2=-\frac12\sum_i1/c_i$, the sum running over the critical points $c_i$ of $P$ counted with multiplicity, so that
$$
S_2(P)=\frac12\,\min_{j}\Bigl|\sum_{i}\frac{P(c_j)}{c_i}\Bigr| ,
$$
a quantity which resembles, but is independent of, the quantity $S(P)=\min_j|P(c_j)/c_j|$ of Section~\ref{sec:smale}.

Let
$$
C_k:=\frac{(2k)!}{k!\,(k+1)!}=\frac1{k+1}\binom{2k}{k}\qquad(k\ge1)
$$
denote the $k$-th Catalan number, so that $C_1=1$, $C_2=2$, $C_3=5$, $C_4=14$, $C_5=42$ and $C_k=\frac{2(2k-1)}{k+1}C_{k-1}<4C_{k-1}$. The inverse of the Koebe function $\kappa(z)=z/(1-z)^2$ has the expansion
\begin{equation}\label{eq:invKoebe}
\kappa^{-1}(w)=\sum_{k\ge1}(-1)^{k-1}C_k\,w^k\qquad(|w|<1/4),
\end{equation}
and L\"owner's theorem \cite{Loewner1923} (see also \cite{Duren}) asserts that if $W\in\cS$ and $W^{-1}(w)=w+\sum_{k\ge2}A_kw^k$ is the expansion of the inverse function near the origin, then $|A_k|\le C_k$ for every $k\ge2$, with equality for the rotations of the Koebe function. Using this theorem Smale proved \cite[Theorem~1, Remarks~1--2 and Lemma~2]{Smale} that
\begin{equation}\label{eq:SmaleK}
\K_k\le C_k^{\frac1{k-1}}<4\qquad(k\ge2),\qquad\text{hence}\qquad\K\le4 .
\end{equation}
He also observed that $\K\ge1$, since $S_d(w-w^d/d)=(d-1)d^{-d/(d-1)}\to1$ as $d\to\infty$, while the polynomials $P(w)=(1-(1-w)^d)/d\in\cP_d$, whose only critical point is $w=1$, give $S_2(P)=\frac{d-1}{2d}$. The constant $\K$ enters his convergence criteria for Newton's method: if $\rho_P:=\min_{P'(c)=0}|P(c)|>0$ and $|P(w)|<\rho_P/(3\K+1)$, then Newton's iteration started at $w$ converges to a zero of $P$ and $|P(w')|<\frac12|P(w)|$ for $w'=w-P(w)/P'(w)$ \cite[Part~II, Corollaries~A and B]{Smale}. In the list of open problems in \cite[Part~III]{Smale} Smale asked (in the notation above):
\begin{itemize}
\item[]\emph{Problem 1A.} Reduce the constant $4$ in $\K\le4$. Is it possible that $\K=1$?
\item[]\emph{Problem 1B.} For $k=2$ inequality \eqref{eq:SmaleK} gives $\K_{d,2}\le2$. Can $2$ be replaced by $\frac{d-1}{2d}$, i.e.\ is $\K_{d,2}=\frac{d-1}{2d}$?
\end{itemize}
Problems 1C and 1D of \cite{Smale} ask whether $(1-(1-w)^d)/d$ is a (local) maximizer of $S_2$ on $\cP_d$, and Problem 1E is the mean value conjecture discussed in Section~\ref{sec:smale}.

For the top coefficient $k=d$ the sharp value of $\K_{d,k}$ is known. Dubinin \cite{Dubinin2006}, using the method of dissymmetrization, proved that every polynomial $P(w)=w^d+\dots+c_1w$ satisfies
$$
\min_{P'(c)=0}|P(c)|\le(d-1)\Bigl|\frac{c_1}{d}\Bigr|^{\frac d{d-1}},
$$
with equality for $P(w)=w^d+c_1w$; in the notation above this reads
\begin{equation}\label{eq:Dubinin}
\K_{d,d}=\frac{d-1}{d^{d/(d-1)}}\qquad(d\ge2),
\end{equation}
so Smale's example $w-w^d/d$ is extremal for $k=d$, and $\K_{d,d}<1$ for every $d$. Our result below concerns the opposite regime of a fixed $k$ and unbounded degree. 

\subsection{The sharp constants $\K_k$ and $\K$}

\begin{theorem}\label{thm:Kk}
For every integer $k\ge2$,
$$
\K_k=\sup_{P\in\cP}S_k(P)=C_k^{\frac1{k-1}},
$$
and for every $A<C_k^{1/(k-1)}$ there are polynomials $P\in\cP$ of arbitrarily large degree with $S_k(P)>A$. Consequently
$$
\K_2=2\qquad\text{and}\qquad\K=\sup_{k\ge2}C_k^{\frac1{k-1}}=\lim_{k\to\infty}C_k^{\frac1{k-1}}=4 .
$$
In particular the constant $4$ in Smale's Problem 1A cannot be reduced, and $\K_{d,2}$ is \textbf{arbitrarily close to} $2$ for sufficiently large $d$, so the answer to Problem 1B is also negative.
\end{theorem}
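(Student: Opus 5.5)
The upper bound $\K_k\le C_k^{1/(k-1)}$ is Smale's inequality \eqref{eq:SmaleK}, so only the reverse inequality, together with the statement about arbitrarily large degree, needs proof. The plan is to run the Runge--Hilbert construction of Proposition~\ref{prop:completion} with $S$ replaced by $S_k$. We apply it to a slightly shrunken inverse Koebe function.

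\textbf{Choice of $f$ and $\Omega$.} Let $f=\kappa^{-1}$, which is univalent on $D(0,1/4)$ and has Taylor coefficients $(-1)^{j-1}C_j$ by \eqref{eq:invKoebe}. Since $\kappa$ has no critical points in $\D$, $f'\ne0$ on $D(0,1/4)$. Fix $\rho<1/4$ close to $1/4$ and put $\Omega=D(0,\rho)$.

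\textbf{Modified polynomials.} Use the polynomials $P_n$ of Proposition~\ref{prop:completion}: a Runge approximant $p$ of $f$ on $\cl{D(0,\rho')}$ for some $\rho<\rho'<1/4$, normalized so that $p(0)=0$ and $p'(0)=1$, plus $wq(w)^n$ with $q$ a Hilbert lemniscate polynomial whose curve lies in a thin annulus around $|w|=\rho$. We need three facts about them.
\begin{itemize}
\item[(i)] All critical points of $P_n$ lie in the thin annulus for large $n$. This is exactly the argument in the proof of Proposition~\ref{prop:BM}; it uses only that $p'\ne0$ on a neighbourhood of the closed disk.
\item[(ii)] At each critical point $c$ we have $q(c)^n=O(1/n)$. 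Hence $|P_n(c)|\ge|p(c)|-o(1)$, and $|p(c)|$ is within $o(1)$ of $\min_{|w|=\rho}|f(w)|$.
\item[(iii)] The $k$-th Taylor coefficient of $P_n$ at $0$ converges to that of $p$. Indeed, the term $wq^n$ contributes coefficients whose size is controlled by $|q(0)|<1$ and its powers. More precisely, the coefficient of $w^k$ in $wq(w)^n$ is a finite combination of $q(0)^{n-j}$, $j\le k-1$, times polynomial factors in $n$, so it tends to $0$. The normalizing factor $1+q(0)^n$ tends to $1$.
\end{itemize}
By Cauchy estimates, the coefficient $\alpha_k(p)$ is close to $(-1)^{k-1}C_k$ when the Runge approximation is fine.

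\textbf{Limit of $S_k$.} Combining (ii) and (iii),
$$\liminf_n S_k(P_n)\ge |\alpha_k(p)|^{1/(k-1)}\min_{|w|=\rho}|f(w)| - o(1).$$
It remains to show $\min_{|w|=\rho}|\kappa^{-1}(w)|\to1$ as $\rho\uparrow1/4$. The circle $|w|=\rho$ is the image under $\kappa$ of a Jordan curve in $\D$. As $\rho\to1/4$ this curve accumulates at the point $z=-1$, where $\kappa(-1)=-1/4$, and elsewhere on $\partial\D$, where $|\kappa|\ge1/4$. I would check that $\kappa^{-1}$ maps $D(0,1/4)$ into $\D$ with boundary values of modulus $1$. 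This holds because $\kappa$ maps $\D$ onto $\C\setminus(-\infty,-1/4]$, so $\partial D(0,1/4)$ corresponds to an arc of $\partial\D$ together with the point near $-1$. Granting this, $\min_{|w|=\rho}|f(w)|\to1$, and therefore $S_k(P_n)>A$ for any $A<C_k^{1/(k-1)}$ once $\rho$, the approximation, and $n$ are chosen suitably. Since $\deg P_n=mn+1\to\infty$, we obtain polynomials of arbitrarily large degree.

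\textbf{Consequences and main obstacle.} The consequences $\K_2=C_2=2$ and $\K=\lim C_k^{1/(k-1)}=4$ follow from Stirling's formula and the fact that $C_k^{1/(k-1)}$ is increasing, which comes from $C_k<4C_{k-1}$ and a short monotonicity check. The claim about $\K_{d,2}$ for large $d$ uses $\K_{d,2}$ nondecreasing in $d$, or at least the arbitrary-degree statement. The main obstacle is the boundary step: controlling $|\kappa^{-1}|$ on $|w|=\rho$ uniformly near $1$, together with keeping $p'\ne0$ on a slightly larger disk while $\rho'<1/4$. If the direct boundary analysis is awkward, I would instead take $f(w)=t^{-1}\kappa^{-1}(tw)$ on $D(0,1/4)$ with $t\uparrow1$. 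This gives a function holomorphic on a neighbourhood of the closed disk, with coefficients $t^{k-1}C_k$ and boundary modulus $t^{-1}|\kappa^{-1}(t\cdot)|$ on $|w|=1/4$. Its limiting behaviour is the same, and it is cleaner to state.
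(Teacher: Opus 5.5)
\paragraph{Gap in the choice of domain.} Reducing to Proposition~\ref{prop:completionK} is the right idea, and your steps (i)--(iii) are fine. The choice $\Omega=D(0,\rho)$, however, breaks the argument at exactly the step you flagged.

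The claim that $\kappa^{-1}$ has boundary values of modulus $1$ on $|w|=1/4$ is false. Since $\kappa(\D)=\C\setminus(-\infty,-1/4]$, every point of $\partial D(0,1/4)$ other than $-1/4$ is an \emph{interior} point of $\kappa(\D)$, so its preimage lies in $\D$. Indeed, $\kappa(e^{i\theta})=-1/(4\sin^2(\theta/2))$, so the curve $\{|\kappa|=1/4\}$ meets $\T$ only at $z=-1$.

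Quantitatively, $\kappa^{-1}(1/4)=3-2\sqrt2\approx0.17$. By the growth bound $|\kappa(z)|\le|z|/(1-|z|)^2$, this is the minimum of $|\kappa^{-1}|$ on $|w|=1/4$, so $\min_{|w|=\rho}|\kappa^{-1}(w)|\to3-2\sqrt2$ as $\rho\uparrow1/4$. Your construction therefore only gives $S_k(P)$ near $(3-2\sqrt2)\,C_k^{1/(k-1)}$, about $0.34$ for $k=2$, rather than $C_k^{1/(k-1)}$.

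The fallback $t^{-1}\kappa^{-1}(tw)$ on $D(0,1/4)$ has the same defect. The quantity is invariant under $f\mapsto\lambda^{-1}f(\lambda w)$, so rescaling a disk cannot help. What matters is the shape of $\Omega$ relative to $f$.

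\paragraph{The missing idea.} Adapt the domain to $f$: take $\Omega_\epsilon=\kappa(D(0,1-\epsilon))=\{w:|\kappa^{-1}(w)|<1-\epsilon\}$. This is the paper's choice, and it has everything you need:
\begin{itemize}
\item Since $\kappa$ is univalent on a neighbourhood of $\cl{D(0,1-\epsilon)}$, $\Omega_\epsilon$ is a bounded analytic Jordan domain containing $0$.
\item $f=\kappa^{-1}$ is holomorphic with $f'\ne0$ on the neighbourhood $\kappa(D(0,1-\epsilon/2))$ of $\cl{\Omega_\epsilon}$.
\item $|f|\equiv1-\epsilon$ on $\partial\Omega_\epsilon$, so the hypothesis quantity of Proposition~\ref{prop:completionK} equals $(1-\epsilon)\,C_k^{1/(k-1)}$.
\end{itemize}
This domain is far from a disk; it exhausts the slit plane as $\epsilon\to0$. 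Nothing else in your argument needs a disk. Runge approximation works on $\cl{\Omega_\epsilon}$, whose complement is connected, and the Hilbert lemniscate can be placed in a thin annular neighbourhood of $\partial\Omega_\epsilon$. So (i)--(iii) go through once you replace the annulus around $|w|=\rho$ by such a neighbourhood.

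In (ii), what you get is the one-sided bound $|p(c)|\ge\min_{\partial\Omega_\epsilon}|f|-o(1)$ rather than closeness. That bound is all the argument uses.

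The consequences follow as you say. $\K_2=2$ is immediate, and $\K=4$ follows from $C_k^{1/(k-1)}<4$ together with Stirling's formula; monotonicity of $C_k^{1/(k-1)}$ is not needed.
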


The upper bound in Theorem~\ref{thm:Kk} is Smale's inequality \eqref{eq:SmaleK}; for the reader's convenience we include its short proof. The new content is the lower bound, which is obtained from the following variant of Proposition~\ref{prop:completion}, in which $|f(w)/w|$ is replaced by $|f(w)|$ and the $k$-th Taylor coefficient at the origin is kept under control.

\begin{proposition}\label{prop:completionK}
Let $\Omega$ be a bounded analytic Jordan domain containing $0$, and let $f$ be holomorphic on a neighbourhood of $\cl\Omega$ with
$$
f(0)=0,\qquad f'(0)=1,\qquad f'\ne0\ \text{on}\ \cl\Omega .
$$
Let $k\ge2$ and suppose that
$$
0<A<\Bigl|\frac{f^{(k)}(0)}{k!}\Bigr|^{\frac1{k-1}}\min_{w\in\partial\Omega}|f(w)| .
$$
Then there is a polynomial $P\in\cP$ with $S_k(P)>A$. More precisely, there are polynomials $p$, $q$ independent of $m$ such that
$$
Q_m(w)=\frac{p(w)+wq(w)^m}{1+q(0)^m}
$$
satisfies $S_k(Q_m)>A$ for all sufficiently large $m$, and $\deg Q_m=m\deg q+1$ for large $m$.
\end{proposition}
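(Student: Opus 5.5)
The plan is to follow the proof of Proposition~\ref{prop:BM} (equivalently Proposition~\ref{prop:completion}) almost verbatim. Two changes are needed: $|f(w)/w|$ is replaced by $|f(w)|$, and the $k$-th Taylor coefficient at the origin must be tracked through every modification. Put $\alpha=f^{(k)}(0)/k!$. The hypothesis forces $\alpha\ne0$ and $|\alpha|^{1/(k-1)}\min_{\partial\Omega}|f|>A$.

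First fix a Riemann map $\phi:\D\to\Omega$ with $\phi(0)=0$. By (T2) it extends conformally to a neighbourhood of $\cl\D$. Choose $\delta>0$ so small that $f$ is holomorphic with $f'\ne0$ on $\phi(\cl{D(0,1+2\delta)})$. By continuity we may also arrange
$$
|\alpha|^{1/(k-1)}|f(w)|>A+2c
$$
on the closed annulus $\cl{N_\epsilon}$ between $\phi((1-\epsilon)\T)$ and $\phi((1+\epsilon)\T)$, for some $c>0$ and all small $0<\epsilon<\delta$.

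Next apply Runge's theorem (T3) on $K=\phi(\cl{D(0,1+2\delta)})$ to obtain polynomials $p_n\to f$. Renormalize to $(p_n-p_n(0))/p_n'(0)$, which still converges to $f$. The Cauchy estimates then give $p_n^{(k)}(0)/k!\to\alpha$. Together with Hurwitz's theorem this lets us fix one polynomial $p$ with the following properties: $p(0)=0$ and $p'(0)=1$; $p'\ne0$ on $\phi(\cl{D(0,1+\delta)})$; if $\alpha_p:=p^{(k)}(0)/k!$, then $|\alpha_p|^{1/(k-1)}|p|>A+c$ on $\cl{N_\epsilon}$.

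Then, by Hilbert's theorem (T5), choose $q$ whose lemniscate $\Gamma=\{|q|=1\}$ lies in $N_\epsilon$ and separates its boundary components. As in the proof of Proposition~\ref{prop:BM}, $0\in\interior\Gamma$, so $|q(0)|<1$, and all zeros and critical points of $q$ lie in $\interior\Gamma$ by (T6). Form $H_m=p+wq^m$. The critical-point localisation argument of Proposition~\ref{prop:BM} uses only these properties of $p$ and $q$, so it carries over without change: for large $m$ every critical point $b$ of $H_m$ lies in $\{1-r<|q|<1+r\}\subset N_\epsilon$. At such a $b$ the same identity gives
$$
q(b)^m=-\frac{p'(b)\,q(b)}{mbq'(b)+q(b)}=O(1/m)
$$
uniformly, because $b$ ranges in a compact set on which $q'\ne0$ and $b\ne0$. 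Hence $|H_m(b)|\ge|p(b)|-O(1/m)$.

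The step needing the most care is the coefficient bookkeeping for $Q_m=H_m/(1+q(0)^m)$. Here $Q_m'(0)=1$, and the $k$-th coefficient of $wq(w)^m$ is the $(k-1)$-th coefficient of $q^m$, which is $\binom{m}{k-1}q(0)^{m-k+1}(q'(0))^{k-1}+\dots$. In general it is bounded by a polynomial in $m$ times $|q(0)|^{m-k+1}$, since each term of the expansion of $q^m$ about $0$ contributing to degree $k-1$ uses at most $k-1$ non-constant factors. Because $|q(0)|<1$, this tends to $0$. So the $k$-th coefficient of $Q_m$ tends to $\alpha_p$, and the normalising factor $1+q(0)^m\to1$. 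Combining,
$$
S_k(Q_m)\ge\bigl(|\alpha_p|-o(1)\bigr)^{1/(k-1)}\frac{\min_{\partial\Omega_\epsilon}|p|-O(1/m)}{1+o(1)}>A
$$
for large $m$, where the minimum over $\partial\Omega_\epsilon$ is replaced by the minimum over $\cl{N_\epsilon}$. Finally $\deg Q_m=m\deg q+1$ once $m\deg q+1>\deg p$, since then the leading term of $wq^m$ cannot cancel against $p$.
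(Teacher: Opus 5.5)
Your proposal is correct and follows the paper's proof essentially step for step. The paper also reruns the Runge--Hilbert argument of Proposition~\ref{prop:completion}, notes that $bq(b)^m=O(1/m)$ at the uniformly bounded critical points, and checks that the $k$-th coefficient is stable; the only difference is that it gets $H_m^{(k)}(0)\to p^{(k)}(0)$ from uniform convergence $H_m\to p$ on a small disc about $0$ (where $|q|<1$) via the Cauchy integral formula, whereas you bound the $(k-1)$-th coefficient of $q^m$ directly by a polynomial in $m$ times $|q(0)|^{m-k+1}$, and both work.
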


\begin{proof}
This follows in essentially the same way as the proof of Proposition~\ref{prop:completion}. In the notation of the proof of Proposition~\ref{prop:completion}, the only points to note are the following.

\begin{itemize}
    \item $$q^{m}(b) = O(\frac{1}{m})$$ at all critical points $b \in Z(H_{m,n}')$ implies $$bq^{m}(b) = O(\frac{1}{m})$$ at all critical points $b \in Z(H_{m,n}')$ since all critical points $b \in Z(H_{m,n}')$ lie in the annular neighbourhood bounded by $\Gamma_{r}^{-}$ and $\Gamma_{r}^{+}$ and thus satisfy $|b| \leq C$, for some fixed finite constant $0<C < \infty$. 
    \item For any fixed radius $r > 0$, chosen sufficiently small so that $\Delta(0;2r) \subset \mathrm{int}(\Gamma_{r}^{-})$ holds, $$H_{m,n}$$ converges uniformly on $\overline{\Delta(0;r)}$ to $$P_{n}$$ as $m \rightarrow \infty$
    \item Uniform convergence of $H_{m,n}$ to $P_n$ on the disk $\overline{\Delta(0;r)}$ implies $H_{m,n}^{(k)}(0) \rightarrow_{m \rightarrow \infty} P_{n}^{(k)}(0)$ by the Cauchy integral formula (note that $k$ is \textbf{fixed})
\end{itemize}

Keeping the above three points in mind, the desired conclusion follows in exactly the same way as in the proof of Proposition~\ref{prop:completion}.

\end{proof}

\begin{proof}[Proof of Theorem~\ref{thm:Kk}] In \cite{Smale} Smale made use of L\"owner's theorem \cite{Loewner1923} to get 
$$
S_k(P)\le C_k^{\frac1{k-1}} .
$$
Taking the supremum over $P$ gives $\K_k\le C_k^{1/(k-1)}$. Moreover $C_1=1$ and $C_k<4C_{k-1}$ give $C_k<4^{k-1}$, i.e.\ $C_k^{1/(k-1)}<4$, so $\K\le4$.

To get the lower bound, fix $\epsilon\in(0,1)$, let $\Omega_\epsilon:=\kappa(D(0,1-\epsilon))$ and let $f:=\kappa^{-1}$, which is holomorphic on the neighbourhood $\kappa(D(0,1-\epsilon/2))$ of $\cl{\Omega_\epsilon}$. Since $\kappa$ is univalent and holomorphic on a neighbourhood of $\cl{D(0,1-\epsilon)}$, $\Omega_\epsilon$ is a bounded analytic Jordan domain containing $0$. Furthermore $f(0)=0$, $f'(0)=1$, and $f'=1/(\kappa'\circ f)\ne0$ because $\kappa'(z)=(1+z)/(1-z)^3$ has no pole in $\D$. On $\partial\Omega_\epsilon=\kappa((1-\epsilon)\partial\D)$ we have $|f|=1-\epsilon$, and by \eqref{eq:invKoebe} $|f^{(k)}(0)/k!|=C_k$. Hence
$$
\Bigl|\frac{f^{(k)}(0)}{k!}\Bigr|^{\frac1{k-1}}\min_{\partial\Omega_\epsilon}|f|=(1-\epsilon)\,C_k^{\frac1{k-1}},
$$
and Proposition~\ref{prop:completionK} (with $A=(1-2\epsilon)C_k^{1/(k-1)}$) yields polynomials $P\in\cP$ of arbitrarily large degree with $S_k(P)>(1-2\epsilon)C_k^{1/(k-1)}$. Letting $\epsilon\downarrow0$ gives $\K_k\ge C_k^{1/(k-1)}$ together with the assertion about large degrees.

 By Stirling's formula $C_k\sim4^k/(\sqrt\pi\,k^{3/2})$, so $C_k^{1/(k-1)}\to4$, and since $C_k^{1/(k-1)}<4$ for all $k$ we get $\K=\sup_k\K_k=4$. Finally $\K_2=\sup_d\K_{d,2}=2$, and the constants $\mathcal{K}_{d,k}$ are increasing in the degree $d$ (see Remark (a) below), so that $\K_{d,2}$ can be made arbitrarily close to $2$ for sufficiently large $d$.
\end{proof}

\begin{remark}\label{remove}
We record here the fact that $\K_{d,k}$ is increasing in the degree $d$, and a consequence of the resolution of Problem 1A for Newton's method.

\smallskip\noindent
\emph{(a) Monotonicity in the degree.}
For $2\le k\le d$ one has $\K_{d+1,k}\ge\K_{d,k}$. Indeed, for $P\in\cP_d$ and
$\eta\ne0$ let $P_\eta:=P+\eta w^{d+1}\in\cP_{d+1}$, so that
$\alpha_j(P_\eta)=\alpha_j(P)$ for $j\le d$. As $\eta\to0$, by Hurwitz's theorem $d-1$
critical points of $P_\eta$ converge to those of $P$, and the remaining one $c_\eta$
satisfies $P'(c_\eta)=-(d+1)\eta c_\eta^{d}$, hence $\eta c_\eta\to-d\alpha_d/(d+1)$
and $P_\eta(c_\eta)=c_\eta^d(\alpha_d+\eta c_\eta)+O(c_\eta^{d-1})\to\infty$. Thus
$\rho_{P_\eta}\to\rho_P$ and $S_k(P_\eta)\to S_k(P)$. Consequently
$\K_{d,k}\uparrow\K_k$ as $d\to\infty$.

\smallskip\noindent
\emph{(b) Consequences for Newton's method and for $\K_{d,k}$.}
Because $\K=4$, the constant $4$ in Smale's convergence criteria for Newton's method
\cite[Part~II]{Smale} cannot be improved as long as these criteria are derived from
\eqref{eq:SmaleThm1}. On the other hand, \eqref{eq:Dubinin} shows that the
degree-restricted constants $\K_{d,k}$ can be much smaller for particular $k$; their
exact values for $2\le k<d$ and the corresponding extremal polynomials
(cf.\ Problems 1C and 1D of \cite{Smale}) remain to be determined.
\end{remark}

\section{The degree-one lower bound}\label{sec:quartic}

We will prove the following result by constructing an explicit admissible degree one univalent Blaschke pair $(W,B)$.
\begin{theorem}\label{thm:lower}
$$
\Bee_1\ge \sqrt{\frac{8081}{6656}}=1.1018586788\ldots.
$$
\end{theorem}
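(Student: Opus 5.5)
The plan is to exhibit one explicit admissible pair $(W,B)$ with $\deg B=1$ and $B(0)=a:=\sqrt{6656/8081}$. Then $1/B(0)=\sqrt{8081/6656}$, and the definition of $\Bee_1$ gives the bound. Take
$$
B(z)=\frac{a-z}{1-az},\qquad 0<a<1,
$$
so that $B(0)=a>0$, and take $W(z)=z+a_2z^2+a_3z^3+a_4z^4$ to be a quartic polynomial. The coefficients should be real, and I would try to arrange that they are rational in $a$. Two properties must then be checked: (i) $W$ is univalent in $\D$, and (ii) $(WB)'\ne0$ in $\D$.

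For (ii), clearing denominators gives
$$
(1-az)^2(WB)'(z)=(a-z)(1-az)W'(z)-(1-a^2)W(z)=:N(z).
$$
This is a polynomial of degree at most $5$ with $N(0)=a\ne0$. Condition (ii) is therefore equivalent to $N$ having no zeros in $\D$. I would choose the free coefficients so that $N$ factors transparently. The natural way is to push the zeros of $N$ onto $\T$ or outside $\cl\D$, for instance by making $N$ a constant times a product of factors $(1-\bar\zeta_j z)$ with $|\zeta_j|\le1$, possibly with a multiple boundary zero. Such a boundary coincidence is exactly what one expects at an extremal configuration. Matching coefficients of $N$ against this prescribed form expresses $a_2,a_3,a_4$ in terms of $a$ and the chosen roots. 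If a clean factorisation is not available, a Schur--Cohn test would verify that no zero of $N$ lies in $\D$.

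For (i), I would use Dieudonn\'e's criterion: $W$ is univalent in $\D$ if and only if
$$
\sum_{k=1}^{4}a_k\,\frac{\sin k\theta}{\sin\theta}\,z^{k-1}\ne0\qquad(|z|<1,\ 0<\theta<\pi/2),\qquad a_1=1.
$$
For fixed $\theta$ this is a cubic in $z$ whose coefficients are Chebyshev $U$-polynomials in $\cos\theta$. Its nonvanishing in $\D$ can be checked by a Schur--Cohn/Jury argument, which reduces to finitely many polynomial inequalities in $x=\cos\theta\in(0,1)$. Those inequalities are then verified by elementary sign and discriminant arguments, or by exact rational interval arithmetic.

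The main obstacle is choosing the parameters. Condition (ii) pushes $a$ down (to make $1/a$ large), while condition (i) constrains how far $W$ can bend. The value $8081/6656$ should come out as the point where both constraints are simultaneously tight: one Dieudonn\'e inequality degenerates (a zero of the cubic reaches $|z|=1$ at some $\theta$), and $N$ acquires a boundary zero. I would first solve this optimisation numerically in the family above, including the real one-parameter slice where $a_3$ or $a_4$ vanishes. I would then guess exact rational coefficients near the numerical optimum so that $a^2=6656/8081$. Finally, I would verify (i) and (ii) rigorously for these exact values, with particular care at the tight points. Non-strict boundary contact is harmless there, since only the open disk $\D$ matters.
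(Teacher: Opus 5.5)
There is a genuine gap, and it is fatal to the ansatz itself: no pair in your family is admissible, whatever the coefficients. With $0<a<1$ and $W$ having real coefficients, $F=WB$ is real-valued on $[0,a]$ and satisfies $F(0)=F(a)=0$. By Rolle's theorem, $F'(\xi)=0$ for some $\xi\in(0,a)\subset\D$; for a function real on the real axis, the complex derivative there equals the real one. In your own notation, $N$ is a real polynomial with $N(0)=a>0$ and $N(a)=-(1-a^2)W(a)<0$, since a real $W\in\cS$ is injective and increasing on $(-1,1)$. So $N$ vanishes in $(0,a)$ for every choice of $a_2,a_3,a_4$, and step (ii) can never succeed. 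Once you rotate the zero of $B$ onto the positive axis, $W$ must have non-real coefficients: the reflection symmetry has to be broken.

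Two further points. Your heuristic that $8081/6656$ marks the point where both constraints become simultaneously tight is wrong. It is just $J(9/16,3/8)$ at one rational point found by search, and the paper notes that $x=14/25$ does better ($>1.1030$), so the value cannot be reverse-engineered from extremality. Also, you never need $B(0)=\sqrt{6656/8081}$ exactly; $B(0)\le\sqrt{6656/8081}$ suffices. Beyond these, the proposal contains no explicit pair and no certificate, which is the entire content of the theorem.

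The paper sidesteps condition (ii) altogether, so this is the route to follow. It starts from $Q(z)=(z-1)^4-u^4$ with $u=-\tfrac9{16}-\tfrac38 i$. Then $Q'=4(z-1)^3\ne0$ in $\D$, and $Q$ has two zeros $\alpha=\tfrac7{16}-\tfrac38 i$ and $\beta=\tfrac58+\tfrac9{16}i$ in $\D$. Dividing out one factor gives $H=Q/B_\alpha$, a quartic with complex coefficients. Its univalence is certified by Dieudonn\'e's criterion, iterated Schur--Cohn transforms and Sturm counts, with the degenerate values $c=0,\pm1/\sqrt2$ treated by hand.

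Next take the automorphism $\phi$ with $\phi(0)=\beta$, put $\eta=B_\alpha(\beta)$, and set $W=H\circ\phi/\bigl((1-|\beta|^2)H'(\beta)\bigr)$ and $B=\tfrac{|\eta|}{\eta}B_\alpha\circ\phi$. Then $WB$ is a constant multiple of $Q\circ\phi$, so local univalence is automatic. Moreover $B(0)=|\eta|$ is the pseudo-hyperbolic distance between $\alpha$ and $\beta$, and $1/|\eta|^2=8081/6656$. The resulting $W$ is a non-symmetric rational function, not a polynomial.
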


There are mainly two ways to construct admissible pairs of degree one. Either to begin with i) a known univalent function or ii) a known locally univalent holomorphic function on the unit disk. One useful and easily proved fact is that if $W\in\cS$ is starlike (with respect to zero) and $B$ is a nonconstant finite Blaschke product with $B(0)\neq 0$, then $(WB)'$ has a zero in $\D$ (see Section~\ref{sec:cr} for the proof). So if one begins with a univalent function (e.g. Koebe function), one needs to modify it so that it is non-starlike (with respect to zero) and then try to find a suitable degree one Blaschke product $B$ so that $(WB)'\neq 0$ on $\D$ (which is a very restrictive condition). We were able to produce a number of admissible pairs this way. However, the one that gives us the largest $1/B(0)$ follows another approach. The idea is to start with a locally univalent function on $\D$, say $Q$ and then consider a Blaschke factor $B_{\alpha}=\frac{z-\alpha}{1-\cl{\alpha}z}$ whose only zero $\alpha$ is also a zero of $Q$. Then try to prove that $Q/B_{\alpha} \in \cS$ and hence it can be our $W$. This approach is also not easy to implement. However, when we start with a polynomial $Q$ with $Q'\neq 0$ on $\D$, then $Q/B_{\alpha}$ will also be a polynomial and we can then apply a test of Dieudonn\'e to show that $Q/B_{\alpha}$ is univalent. 

\begin{proof}
To begin with, we consider a polynomial $Q$ with at least two distinct zeros in $\D$, say $a$ and $b$ and  $Q'$ is non-zero throughout $\D$, and with
\begin{equation}\label{eq:H}
H(z):=\frac{Q(z)}{B_a(z)}=\frac{(1-\cl a\,z)\,Q(z)}{z-a}
\end{equation}
univalent in $\D$.
Then $H(b)=0$ and
\begin{equation}\label{eq:eta}
\eta:=B_a(b)=\frac{b-a}{1-\cl a\,b}
\end{equation}
has modulus equal to the pseudo-hyperbolic distance between $a$ and $b$. Then the following lemma produce an admissible pair.

\begin{lemma}(Change of base) \label{lem:base}
Under the hypotheses above, set $\phi(z)=(b+z)/(1+\cl b\,z)$ and $N=(1-|b|^2)H'(b)$.
Then
\begin{equation}\label{eq:pair}
W(z):=\frac{H(\phi(z))}{N},
\qquad
B(z):=\frac{|\eta|}{\eta}B_a(\phi(z))
\end{equation}
is an admissible pair of degree one, and $B(0)=|\eta|$.
\end{lemma}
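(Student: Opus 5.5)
The plan is to verify the three conditions of Definition~\ref{def:admissible} (in its unnormalized form), then normalize and compute $B(0)$. Throughout, $\phi(z)=(b+z)/(1+\cl b z)$ is a disk automorphism with $\phi(0)=b$ and $\phi'(0)=1-|b|^2$.

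First, $W$ lies in $\cS$. Since $H$ is univalent in $\D$ by hypothesis and $\phi$ is an automorphism of $\D$, the composition $H\circ\phi$ is univalent in $\D$. It vanishes at $0$ because $H(b)=0$. Its derivative at $0$ is $H'(b)\phi'(0)=(1-|b|^2)H'(b)=N$. Univalence forces $H'(b)\neq0$, so $N\ne0$ and $W=H\circ\phi/N$ satisfies $W(0)=0$ and $W'(0)=1$.

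Second, $B$ is a degree-one Blaschke product with the right value at the origin. $B_a\circ\phi$ is a composition of an automorphism with a Blaschke factor, hence a Blaschke factor of degree one; multiplying by the unimodular constant $|\eta|/\eta$ keeps it one. At the origin, $B(0)=\frac{|\eta|}{\eta}B_a(b)=\frac{|\eta|}{\eta}\,\eta=|\eta|$, which is positive since $a\ne b$.

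Third, local univalence. Compute
\[
W(z)B(z)=\frac{|\eta|}{\eta N}\,H(\phi(z))\,B_a(\phi(z))=\frac{|\eta|}{\eta N}\,Q(\phi(z)),
\]
using the definition \eqref{eq:H} in the form $Q=HB_a$. Hence
\[
(WB)'(z)=\frac{|\eta|}{\eta N}\,Q'(\phi(z))\,\phi'(z).
\]
This is nonzero on $\D$, because $Q'\ne0$ on $\D$, $\phi(\D)=\D$, and $\phi'\neq0$.

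Combining the three checks, $(W,B)$ is admissible of degree one with $B(0)=|\eta|$. There is no real obstacle here. The only care points are confirming $N\ne0$, which follows from univalence of $H$, and that $\eta\ne0$, which follows from $a\ne b$; both are immediate.
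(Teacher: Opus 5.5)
Your proposal is correct and follows the paper's proof essentially step for step: $W\in\cS$ because $H\circ\phi$ is univalent with $W'(0)=H'(b)\phi'(0)/N=1$, $B(0)=\frac{|\eta|}{\eta}B_a(b)=|\eta|>0$, and $(WB)'\neq0$ because $WB$ is a nonzero constant multiple of $Q\circ\phi$ with $\phi$ an automorphism of $\D$. Your explicit checks that $N\neq0$ (from univalence of $H$) and $\eta\neq0$ (from $a\neq b$) are small points the paper leaves implicit.
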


\begin{proof}
The map $\phi$ is an automorphism of $\D$ with $\phi(0)=b$, and $\phi'(0)=1-|b|^2$.
Since $H$ is univalent and $H(b)=0$, the function $W$ is univalent, $W(0)=0$, and
$$
W'(0)=\frac{H'(b)\,\phi'(0)}{N}=1
$$
and therefore $W\in\cS$. The factor $B$ is a degree-one Blaschke product with $B(0)=\frac{|\eta|}{\eta}B_a(b)=|\eta|>0$. By construction, $WB$ is a non-zero constant multiple of $Q\circ\phi$ and hence $(WB)'(z) \neq 0$ on $\D$ as $\phi$ is an automorphism of $\D$
\end{proof}
By Lemma \ref{lem:base} and the identity
$|1-\cl a\,b|^2=|a-b|^2+(1-|a|^2)(1-|b|^2)$, to maximize $1/B(0)$ is the same as to maximize the following elementary expansion
\begin{equation}\label{eq:pseudo}
\frac1{|\eta|^2}
=
1+\frac{(1-|a|^2)(1-|b|^2)}{|a-b|^2}
\end{equation}
From (\ref{eq:pseudo}), we want to make $a$ and $b$ close but at the same time cannot be too close as then $Q'$ and hence $(WB)'$ may have a zero in $\D$.

The maybe simplest way to begin with the construction of $Q$ is to take $Q'(z)=d(z-1)^{d-1}$ where $d\ge 2$ so that $Q' \neq 0$ on $\D$. Then we can write $Q(z)=(z-1)^d-u^d$. It turns out that when $d=4$, we can find a suitable $u$ that produces an admissible pair with $1/B(0)>1.1018$. To find the suitable $u$, we let $u=-x-iy$ with $x,y>0$. Then the four zeros of $(z-1)^4-u^4$ are 
\begin{align*}
1+u&=1-x-iy,\\
1-iu&=1-y+ix,\\
1-u&=1+x+iy,\\
1+iu&=1+y-ix.
\end{align*}

The last two have real part greater than $1$, so they stay outside $\overline{\mathbb D}$. Let $a=1-x-iy$ and $b=1-y+ix$ and $s=x^2+y^2$. Then both $a$ and $b$ lie in $\D$ if and only if
\begin{equation}\label{eq:indisk}
(x-1)^2+y^2<1
\qquad\text{and}\qquad
x^2+(y-1)^2<1,
\end{equation}
equivalently $s<2x$ and $s<2y$. Thus this search region $\cR$ is the intersection of two open unit disks, centred at $(1,0)$ and at $(0,1)$.
Whenever the quotient $H$ 
is univalent in $\D$, Lemma~\ref{lem:base} produces an admissible pair, and \eqref{eq:pseudo} gives
\begin{equation}\label{eq:J}
J(x,y):=\left(\frac1{B(0)}\right)^2
=
1+\frac{2xy}{s}-(x+y)+\frac s2.
\end{equation}
We search rational points $(x,y) \in \cR$ so that $J(x,y)$ is as large as possible subject to the degree $4$ polynomial $H$ (which depends on $x,y$) is univalent in $\D$. We will use the Dieudonn\'e's criterion combined with the Schur--Cohn test to help us check the univalence of $H$.

\begin{lemma}[Dieudonn\'e]\label{lem:dieu}
A polynomial $H(z)=\sum_{k=1}^n a_k z^k$ is univalent in $\D$ if and only if, for every $c\in[-1,1]$, its Dieudonn\'e polynomial
$$
P_c(w):=\sum_{k=1}^n a_k\,U_{k-1}(c)\,w^{k-1}
$$
has no zero in $\D$. Here $U_n$ is the Chebyshev polynomial of the second kind. 
\end{lemma}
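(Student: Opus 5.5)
We prove the two directions separately. Both rest on one algebraic identity, and the converse needs one extra observation: a polynomial that fails to be univalent must first fail on a circle $|z|=r_0<1$.

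\emph{The identity.} Write $z_1=we^{i\theta}$ and $z_2=we^{-i\theta}$ with $\theta$ real. Since $U_{k-1}(\cos\theta)=\sin k\theta/\sin\theta$, we have
$$
\frac{z_1^k-z_2^k}{z_1-z_2}=w^{k-1}\,\frac{e^{ik\theta}-e^{-ik\theta}}{e^{i\theta}-e^{-i\theta}}=w^{k-1}U_{k-1}(\cos\theta).
$$
Summing against the coefficients $a_k$ gives, with $c=\cos\theta\in[-1,1]$,
$$
\frac{H(z_1)-H(z_2)}{z_1-z_2}=P_c(w)\qquad (z_1\ne z_2).
$$
The case $z_1=z_2$ is the limit $\theta\to0$: it gives $H'(w)=P_1(w)$, since $U_{k-1}(1)=k$. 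Every $w\in\D$ and every $c\in[-1,1]$ arise this way, with $|z_1|=|z_2|=|w|<1$.

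\emph{Necessity.} Suppose $H$ is univalent in $\D$, and let $w\in\D$, $c=\cos\theta\in[-1,1]$. If $\theta\notin\pi\mathbb Z$, then $z_1\ne z_2$ are two points of $\D$, so the difference quotient above is nonzero and $P_c(w)\ne0$. If $c=1$, then $P_1(w)=H'(w)\ne0$ by univalence. If $c=-1$, then $P_{-1}(w)=H'(-w)\ne0$, because $U_{k-1}(-1)=(-1)^{k-1}k$. Hence $P_c$ has no zero in $\D$ for any $c\in[-1,1]$.

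\emph{Sufficiency.} Assume every $P_c$, $c\in[-1,1]$, is zero-free in $\D$, and suppose for contradiction that $H$ is not univalent in $\D$. Since $P_c(0)=a_1\ne0$, $H$ is univalent on some small disk $D(0,\rho)$. Set
$$
r_0:=\sup\{r\le1:\ H \text{ is univalent on } D(0,r)\},
$$
so that $0<r_0<1$. Then $H$ is univalent on $D(0,r_0)$, and I claim that one of the following holds:
\begin{itemize}
\item[(i)] $H'(z_0)=0$ for some $z_0$ with $|z_0|=r_0$;
\item[(ii)] $H(z_1)=H(z_2)$ for some distinct $z_1,z_2$ with $|z_1|=|z_2|=r_0$.
\end{itemize}
Suppose neither holds. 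Then $H$ is injective on $\cl{D(0,r_0)}$: two interior points cannot collide by univalence, and an interior point $u$ cannot collide with a boundary point $v$, since then the open mapping theorem would give points near $v$ inside the disk colliding with points near $u$. Also $H'\ne0$ on $\cl{D(0,r_0)}$. By the standard compactness argument, an injective holomorphic map with nonvanishing derivative on a compact set is injective on a neighbourhood of it. So $H$ is univalent on some $D(0,r_0+\delta)$, contradicting the definition of $r_0$.

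This step is the main obstacle. The rest is bookkeeping, but here one must be careful that a failure of injectivity first appears between two points of the same modulus, or at a boundary critical point. That equal-modulus property is exactly what lets the symmetric parametrization with real $\theta$ suffice.

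\emph{Conclusion.} In case (i), put $w=z_0$; then $P_1(w)=H'(z_0)=0$ with $|w|=r_0<1$. In case (ii), write $z_1=r_0e^{i\alpha}$ and $z_2=r_0e^{i\beta}$, and set $w=r_0e^{i(\alpha+\beta)/2}$, $\theta=(\alpha-\beta)/2$, so that $z_1=we^{i\theta}$, $z_2=we^{-i\theta}$. Then
$$
P_{\cos\theta}(w)=\frac{H(z_1)-H(z_2)}{z_1-z_2}=0,\qquad w\in\D,\ \cos\theta\in[-1,1].
$$
Either way some $P_c$ with $c\in[-1,1]$ has a zero in $\D$, contradicting the hypothesis. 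Hence $H$ is univalent in $\D$.
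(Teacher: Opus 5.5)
The paper does not prove this lemma; it quotes it from Dieudonn\'e \cite{Dieudonne}, remarking only that the application in Section~\ref{sec:quartic} checks the stronger closed-disk condition. Your argument is a correct, self-contained proof of the statement as given. Its core is the classical identity $P_{\cos\theta}(w)=\bigl(H(we^{i\theta})-H(we^{-i\theta})\bigr)/(2iw\sin\theta)$, which turns univalence into injectivity on circles centred at $0$.

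You differ from the textbook route only in the sufficiency direction. The usual proof invokes Darboux's theorem: a function holomorphic on $\cl{D(0,r)}$ and injective on $|z|=r$ is univalent in $D(0,r)$, by the argument principle. That route only ever needs distinct pairs of points on one circle. Hence it only needs $c\in(-1,1)$, and by the symmetry $P_{-c}(w)=P_c(-w)$ only $c\in[0,1)$. Your first-failure-radius argument replaces Darboux's theorem with an elementary compactness argument. The price is that a critical point can appear on the critical circle (your case (i)), and that is exactly where you need the endpoint $c=1$ through $P_1=H'$. Since the lemma includes $c=\pm1$, this costs you nothing here.

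There is one small slip in the necessity part. When $w=0$ the points $z_1=z_2=0$ coincide even for $\theta\notin\pi\mathbb Z$, so your ``two distinct points of $\D$'' claim fails there. Handle that case separately using $P_c(0)=a_1=H'(0)\ne0$.
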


The classical statement uses $c=\cos\theta$ and forbids zeros in the open disk only \cite{Dieudonne}.  A zero of some $P_c$ on $\T$ need not destroy univalence in $\D$. The criterion used here is the sufficient closed-disk condition, which is what the Schur--Cohn test detects.

\begin{lemma}[Schur--Cohn test]\label{lem:schur}
Let $p(w)=\sum_{k=0}^n\alpha_k w^k$ with $\alpha_n\neq0$, and let
$p^*(w)=\sum_{k=0}^n\cl{\alpha_{n-k}}\,w^k$.
The transform
$$
(Tp)(w)=\cl{\alpha_0}\,p(w)-\alpha_n\,p^*(w)
$$
has degree at most $n-1$, and its constant term is $\delta=|\alpha_0|^2-|\alpha_n|^2$.
If each of the $n$ successive transforms has positive constant term and drops degree by exactly one, then $p$ has no zero in the closed disk.
\end{lemma}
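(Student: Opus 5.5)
The statement to prove is the Schur--Cohn test: the degree and constant-term claims for $Tp$, and the closed-disk zero-free conclusion. I will use the classical Schur--Cohn reduction step combined with Rouché's theorem, followed by induction on $n$.

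\textbf{Step 1: algebraic claims.} The coefficient of $w^n$ in $Tp$ is $\cl{\alpha_0}\alpha_n-\alpha_n\cl{\alpha_0}=0$, since $p^*$ has leading coefficient $\cl{\alpha_0}$. Hence $\deg Tp\le n-1$. The constant term of $p^*$ is $\cl{\alpha_n}$, so the constant term of $Tp$ is $\cl{\alpha_0}\alpha_0-\alpha_n\cl{\alpha_n}=|\alpha_0|^2-|\alpha_n|^2=\delta$.

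\textbf{Step 2: key one-step lemma.} If $\delta>0$, then $p$ and $Tp$ have the same number of zeros in the closed disk $\cl\D$, counted with multiplicity. To see this, note that on $\T$ we have $p^*(w)=w^n\cl{p(w)}$ (use $\cl w=1/w$), so $|p^*|=|p|$ there. Since $\delta>0$ means $|\alpha_0|>|\alpha_n|$, it follows that on $\T$
$$|\alpha_n p^*(w)|=|\alpha_n||p(w)|<|\alpha_0||p(w)|=|\cl{\alpha_0}p(w)|$$
wherever $p(w)\ne0$.

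Zeros of $p$ on $\T$ need care, since there the strict inequality fails. At such a point $p^*$ also vanishes, because $|p^*|=|p|$ on $\T$, so $Tp$ vanishes too. I handle this by noting that the conclusion we want concerns the closed disk. Either I count zeros in $D(0,\rho)$ for $\rho$ slightly greater than $1$, where the strict inequality persists by continuity and compactness away from boundary zeros, or more simply I argue directly as follows.

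Suppose $p(w_0)=0$ with $|w_0|\le1$. If $|w_0|<1$, Rouché on a slightly smaller circle avoiding the zeros of $p$ near $\T$ handles interior zeros. Cleaner is to prove the contrapositive form: if $p$ has a zero in $\cl\D$, then $Tp$ has a zero in $\cl\D$.
\begin{itemize}
\item Boundary zeros pass to $Tp$, as shown above.
\item If $p$ has no zeros on $\T$ but has zeros in $\D$, then Rouché on $\T$ shows that $\cl{\alpha_0}p$ and $Tp=\cl{\alpha_0}p-\alpha_np^*$ have the same number of zeros in $\D$, which is positive.
\end{itemize}

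\textbf{Step 3: induction.} Let $p_0=p$ and $p_{j+1}=Tp_j$. By hypothesis each $p_j$ has exact degree $n-j$ and positive constant term $\delta_j$. Exact degree makes the leading coefficient nonzero, so the transform at each stage is well defined as in the statement. Step 2 applies at each stage and shows that if $p_j$ has a zero in $\cl\D$, then so does $p_{j+1}$. The final transform $p_n$ has degree $0$ and is the nonzero constant $\delta_{n-1}>0$, so it has no zeros. Running the implication backwards, $p$ has no zero in $\cl\D$.

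\textbf{Main obstacle.} The only delicate point is the boundary case in Step 2, where Rouché's strict inequality fails. The identity $|p^*|=|p|$ on $\T$ resolves it, since it shows boundary zeros of $p$ are zeros of $Tp$. Everything else is routine bookkeeping.
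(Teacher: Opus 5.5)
Your proof is correct and complete. The paper states this lemma without proof, as a classical tool, so there is no different route to compare against; yours (the identity $p^*(w)=w^n\cl{p(w)}$ on $\T$, then Rouch\'e, then induction) is the standard argument.

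Two small points of tidying.

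\emph{The one-step lemma.} As you state it (equal zero counts in $\cl\D$), it is stronger than what you actually prove. Your argument establishes only the implication ``$p$ has a zero in $\cl\D$ $\Rightarrow$ $Tp$ has a zero in $\cl\D$'': boundary zeros pass to $Tp$ because $|p^*|=|p|$ on $\T$, and interior zeros pass via Rouch\'e when $p\ne0$ on $\T$. That implication is all the induction needs, so state that version. You can also drop the two abandoned sketches with shrunken or enlarged circles. If you do want the equal count, factor out the unimodular zeros. For $|w_0|=1$ one has $(w-w_0)^*=-\cl{w_0}(w-w_0)$, so writing $p=ur$ with $u$ collecting the zeros on $\T$ gives $Tp=u\,(\cl{\alpha_0}r-\alpha_n c\,r^*)$ with $|c|=1$. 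Rouch\'e then applies to the cofactor.

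\emph{Indexing in Step 3.} To apply Step 2 at stage $j$ you need $|p_j(0)|^2-|\ell_j|^2>0$, where $\ell_j$ is the leading coefficient of $p_j$. This is the constant term of $p_{j+1}=Tp_j$, not of $p_j$. Your later use of $\delta_{n-1}$ as the value of $p_n$ shows this is what you mean. However, the sentence ``each $p_j$ has positive constant term $\delta_j$'' is off by one: the constant term of $p_0=p$ is $\alpha_0$, which need not even be real.
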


The coefficient of $w^{n-1}$ in $Tp$ is $\cl{\alpha_0}\,\alpha_{n-1}-\alpha_n\,\cl{\alpha_1}$. For the  quartic polynomial $H$, the leading coefficient of its Dieudonn\'e polynomial $P_c$ is a non-zero multiple of $U_3(c)=4c(2c^2-1)$. So $P_c$ will not be cubic if $c=0$ and $c=\pm1/\sqrt2$.

Assume $c$ is not one of these three values. To apply Lemma \ref{lem:schur} repeatedly to the cubic $P_c$ to make sure that it has no zeros in $\D$ and hence $H$ is univalent in $\D$ by Lemma \ref{lem:dieu}, one needs to check three constant terms and two leading coefficients for $TP_c$ and $T(TP_c)$ respectively.
All five are even polynomials in $c$ (because $U_k(-c)=(-1)^kU_k(c)$ gives $P_{-c}(w)=P_c(-w)$, and the Schur--Cohn data are invariant under $w\mapsto-w$).
After a positive rational factor and the integer content are removed, they are the primitive polynomials $D_1,D_2,D_3,L_1$ and $c^2M(c)$ of the appendix. Grok-4.6 was then used  to search for rational $(x,y)\in \cR$ so that these five polynomials are positive and $J(x,y)$ is as large as possible. It turns out that for $x=9/16$ and $y=3/8$, 
$$\frac{1}{B(0)}=\sqrt{J(\frac{9}{16},\frac{3}{8})}=\sqrt{\frac{8081}{6656}}$$
and we also have $H(z)=a_0+a_1z+a_2z^2+a_3z^3+a_4z^4$
with
\begin{align*}
a_0&=-\frac{6877}{4096}-\frac{2253}{2048}\,i,&
a_1&=\frac{302959}{65536}+\frac{37401}{16384}\,i,\\
a_2&=-\frac{20499}{4096}-\frac{5121}{2048}\,i,&
a_3&=\frac{619}{256}+\frac32\,i,&
a_4&=-\frac7{16}-\frac38\,i.
\end{align*}
To apply Lemma \ref{lem:dieu}, we need to  consider $H-H(0)$ instead of $H$ and this is fine as univalence is translation-invariant.
For this $H-H(0)$'s Dieudonn\'e polynomial $P_c$, we have the following two lemmas.

\begin{lemma}\label{lem:deltas}
When $x=9/16$ and $y=3/8$, we have for every $c\in[-1,1]$ other than $0$ and $\pm1/\sqrt2$, the Dieudonn\'e polynomial $P_c$ has no zero in $\cl\D$.
\end{lemma}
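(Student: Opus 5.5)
The plan is to apply the Schur--Cohn test (Lemma~\ref{lem:schur}) symbolically in the parameter $c$. I would not treat each $c$ separately.

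First, compute $H-H(0)=\sum_{k=1}^4 a_kz^k$ with the explicit coefficients above. Its Dieudonn\'e polynomial is
$$P_c(w)=a_1+a_2U_1(c)w+a_3U_2(c)w^2+a_4U_3(c)w^3,$$
with $U_1(c)=2c$, $U_2(c)=4c^2-1$ and $U_3(c)=4c(2c^2-1)$. For $c\notin\{0,\pm1/\sqrt2\}$ this is a genuine cubic, since $a_4\ne0$. Write $\alpha_0,\dots,\alpha_3$ for its coefficients; each is a polynomial in $c$ with Gaussian-rational coefficients.

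Next, form the transform $TP_c=\cl{\alpha_0}P_c-\alpha_3P_c^*$, which has degree at most two, and then $T(TP_c)$ and $T^3P_c$. Record three constant terms, $\delta_1=|\alpha_0|^2-|\alpha_3|^2$, $\delta_2$ and $\delta_3$, together with the leading coefficients of $TP_c$ and $T^2P_c$. Each leading coefficient is given by the formula $\cl{\alpha_0}\alpha_{n-1}-\alpha_n\cl{\alpha_1}$ noted after Lemma~\ref{lem:schur}.

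Because $c$ is real, the quantities $|\alpha_j|^2$ and $\cl{\alpha_0}\alpha_{2}-\alpha_3\cl{\alpha_1}$ can be expanded as polynomials in $c$. The constant terms are real polynomials in $c$. For the leading coefficients it suffices to show they do not vanish, and I would control that through the squared modulus, which is again a real polynomial in $c$. As remarked in the text, all five quantities are even in $c$. After clearing positive rational factors and content they become the polynomials $D_1,D_2,D_3,L_1$ and $c^2M(c)$ of the appendix. The factor $c^2$ accounts for the excluded value $c=0$, where the degree drops.

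The lemma then reduces to showing, after substituting $t=c^2\in[0,1]$, that $D_1,D_2,D_3>0$ on $[0,1]$, that $L_1\ne0$ there, and that $M\ne0$ there, with the excluded points handled separately. For each univariate polynomial with rational coefficients I would certify positivity exactly. The first tool is Sturm sequences, or Descartes' rule after the substitution $t=s/(1+s)$, showing there are no roots in $[0,1]$, followed by evaluating at one point to fix the sign. Alternatively, one can write each polynomial as a combination of $t^i(1-t)^j$ with positive coefficients in the Bernstein basis, subdividing the interval if necessary.

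Once these five facts hold, every successive transform has positive constant term and drops degree by exactly one. Lemma~\ref{lem:schur} then shows $P_c$ has no zero in $\cl\D$ for each admissible $c$.

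The main obstacle is the size of these polynomials. $\delta_3$ has high degree in $c$, about $24$, and huge rational coefficients, and its minimum over $[0,1]$ may be only barely positive near the edge of the search region. I expect a naive bound to fail there. My first attempt would be an exact Sturm count over $\mathbb{Q}$, done by computer algebra. If that proves unwieldy, I would fall back to a Bernstein expansion with adaptive subdivision, which gives a rigorous rational certificate.
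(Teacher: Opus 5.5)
Your proposal is correct and follows essentially the same route as the paper. The paper also runs the Schur--Cohn recursion symbolically in $c$, identifies the three constant terms and the two squared leading-coefficient moduli with $D_1,D_2,D_3,L_1$ and $c^2M(c)$, and certifies their signs on $[-1,1]$ by Sturm counts plus evaluation at $c=0,\pm1$, noting that $c=0$ is the only root of $c^2M(c)$ in $(-1,1)$; your Bernstein-basis fallback would also be a valid certificate but is not needed there.
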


\begin{proof}
By Sturm's theorem (see Section 10.5 of \cite{RS}), one finds no zero of $D_1$, $D_2$ or $D_3$ in $(-1,1)$.
Each is positive at $c=0$ and at $c=\pm1$, hence positive on $[-1,1]$. (For $D_1$ this can be checked by hand: $D_1(c)=114165512497-22817013760\,c^2(1-2c^2)^2$, and $\max_{0\le\tau\le1}\tau(1-2\tau)^2=1$.)
For every $c\in[-1,1]$ other than $0$ and $\pm1/\sqrt2$, these values $D_i$ are the three Schur constants of the $P_c$, so those constants are positive there.

The same count shows that $L_1$ has no root in $(-1,1)$ and is positive at the endpoints, so the first transform drops degree by exactly one everywhere on the interval.
The squared modulus of the next leading coefficient of $T(TP_c)$ is a positive rational multiple of $c^2 M(c)$. Sturm's theorem finds exactly one distinct root of $c^2M(c)$ in $(-1,1)$.
As $c^2M(c)=0$ at $c=0$, this implies that this root is $c=0$ and $M$ has no root in $(-1,1)$.
Since $M(0)$ and $M(\pm1)$ are positive, $M>0$ on $[-1,1]$, and the second leading coefficient $c^2M(c)$ is positive on the closed interval as $c \ne 0$. So both transforms drop degree by exactly one, and all three constant terms are positive.
Lemma~\ref{lem:schur} gives the claim.
\end{proof}

It remains to prove the following  result to complete the proof of Theorem \ref{thm:lower}.

\begin{lemma}\label{lem:drop}
When $x=9/16$ and $y=3/8$, the Dieudonn\'e polynomial $P_c$
at $c=0$ and at $c=\pm1/\sqrt2$ have no zero in $\cl\D$.
\end{lemma}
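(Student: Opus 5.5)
The plan is to compute the two degenerate Dieudonné polynomials explicitly and check them by hand, in exact arithmetic. Write $a_1,\dots,a_4$ for the coefficients of $H-H(0)$ listed above. Since $U_0=1$, $U_1(c)=2c$, $U_2(c)=4c^2-1$ and $U_3(c)=4c(2c^2-1)$, we have $U_3=0$ at all three exceptional values, so $P_c$ has degree at most $2$ there. Moreover $P_{-c}(w)=P_c(-w)$, and $w\mapsto-w$ preserves $\cl\D$. So it suffices to treat $c=0$ and $c=1/\sqrt2$.

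\emph{Case $c=0$.} Here $U_1(0)=0$ and $U_2(0)=-1$, so $P_0(w)=a_1-a_3w^2$. Its zeros satisfy $w^2=a_1/a_3$, so they lie outside $\cl\D$ if and only if $|a_1|>|a_3|$. From the listed values, $|a_1|^2=(302959/65536)^2+(37401/16384)^2\approx 26.6$ and $|a_3|^2=(619/256)^2+9/4\approx 8.1$. This inequality between rationals is checked exactly.

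\emph{Case $c=1/\sqrt2$.} Here $U_1=\sqrt2$ and $U_2=1$, so
$$
P_c(w)=a_1+\sqrt2\,a_2\,w+a_3\,w^2 ,
$$
a genuine quadratic since $a_3\ne0$. I would apply Lemma~\ref{lem:schur} twice with $\alpha_0=a_1$, $\alpha_1=\sqrt2a_2$, $\alpha_2=a_3$.
\begin{itemize}
\item[] \emph{First transform.} Its constant term is $\delta_1=|a_1|^2-|a_3|^2>0$, already verified in the case $c=0$. Its linear coefficient is $\beta=\sqrt2\,(\cl{a_1}a_2-a_3\cl{a_2})$. The degree drops by exactly one provided $\beta\ne0$, which holds since $\beta$ has nonzero real part.
\item[] \emph{Second transform.} Its constant term is $\delta_1^2-|\beta|^2=\delta_1^2-2|\cl{a_1}a_2-a_3\cl{a_2}|^2$. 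Both $\delta_1^2$ and $|\cl{a_1}a_2-a_3\cl{a_2}|^2$ are explicit positive rationals, because the factor $\sqrt2$ enters only squared. So the final check is a single inequality between two rationals with power-of-two denominators, to be verified by exact integer arithmetic after clearing denominators.
\end{itemize}

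The main obstacle is this last inequality. A rough floating-point estimate gives $\delta_1^2\approx 339$ and $2|\cl{a_1}a_2-a_3\cl{a_2}|^2\approx 340$. These are too close to trust rounding, and might even go the wrong way. So the exact computation must be done carefully, presumably this is part of why the parameters $x=9/16$, $y=3/8$ were tuned.

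If the exact Schur--Cohn inequality fails or is borderline, I would switch to the direct route available for a quadratic. Both roots lie outside $\cl\D$ exactly when the product of the roots satisfies $|a_1/a_3|>1$ and the smaller root, computed from the quadratic formula $w=(-\sqrt2a_2\pm\sqrt{2a_2^2-4a_1a_3})/(2a_3)$, has modulus greater than $1$. The latter can be certified with interval bounds on the square root of an explicit complex number.

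Combining the two cases, the symmetry $c\mapsto -c$, and Lemma~\ref{lem:deltas}, every $P_c$ with $c\in[-1,1]$ has no zero in $\cl\D$. Lemma~\ref{lem:dieu} then gives univalence of $H$, and Lemma~\ref{lem:base} completes Theorem~\ref{thm:lower}.
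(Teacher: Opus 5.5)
Your route is the paper's. You dispose of $P_0=a_1-a_3w^2$ by $|a_1|>|a_3|$ (the paper records $|a_1|^2-|a_3|^2=79390996785/2^{32}$), and of $a_1\pm\sqrt2\,a_2w+a_3w^2$ by two Schur--Cohn steps. Your reduction of $-1/\sqrt2$ to $+1/\sqrt2$ via $P_{-c}(w)=P_c(-w)$ is equivalent to the paper treating both signs at once.

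The gap is that you do not evaluate the second Schur--Cohn constant, which is the only number the lemma rests on, and your rough estimate of it is wrong. Exactly, $\cl{a_1}a_2-a_3\cl{a_2}=(-3487417605+355971726\,i)/2^{28}$. Hence $\delta_1\approx18.4847$, $\delta_1^2\approx341.68$ and $|\beta|^2=2|\cl{a_1}a_2-a_3\cl{a_2}|^2\approx341.08$, not $339$ and $340$. In fact
$$
\delta_1^2-|\beta|^2=\frac{79390996785^2-512\,(3487417605^2+355971726^2)}{2^{64}}=\frac{11066090771698524513}{2^{64}}\approx0.600>0 ,
$$
which is the value the paper states. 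The margin is about $0.6$ out of $341$, far from any rounding danger. So the second transform has positive constant term and the argument closes.

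Your proposed fallback is not an independent route. Since $\delta_1>0$ and $|P_c^*|=|P_c|$ on $\T$, zeros of $P_c$ and $TP_c$ on $\T$ coincide. Rouch\'e then gives equal zero counts in $\D$. Hence $P_c$ is zero-free on $\cl\D$ if and only if $TP_c=\delta_1+\beta w$ is, that is, if and only if $\delta_1>|\beta|$. Had the constant been $\le0$, the quadratic formula would only have confirmed a root in $\cl\D$. The exact evaluation above cannot be avoided.
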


\begin{proof}
At $c=0$ one has $U_1(0)=U_3(0)=0$ and $U_2(0)=-1$, so $P_0(w)=a_1-a_3w^2$.
The roots satisfy $|w|^2=|a_1|/|a_3|$, and since the coefficient of $w$ in $P_0$ vanishes, we have  
$$
TP_0(w)=|a_1|^2-|a_3|^2=\frac{79390996785}{4294967296}>0.
$$
Hence, both roots lie outside the closed disk.

At $c=\pm1/\sqrt2$ one has $U_3(c)=0$ and $U_2(c)=1$, so the $P_c$ is the quadratic
$$
a_1\pm a_2\sqrt2\,w+a_3w^2.
$$
The constant term of $TP_c$ is the same positive number $|a_1|^2-|a_3|^2$, and its leading coefficient
$$
\pm\sqrt2\,(\cl{a_1}a_2-a_3\cl{a_2})=\pm\sqrt2\;\frac{-3487417605+355971726\,i}{2^{28}}
$$
is nonzero, so $TP_c$ has degree exactly one.
The constant term of $T(TP_c)$ is 
$$
\frac{11066090771698524513}{18446744073709551616}>0.
$$
for both $c$. By Lemma~\ref{lem:schur}, $P_c$ has no zero in $\cl\D$ for these three values of $c$.
\end{proof}

The two lemmas say that every Dieudonn\'e polynomial $P_c$, $c\in[-1,1]$, is zero-free in $\cl\D$.
Dieudonn\'e's criterion gives univalence of $H$, which completes the proof of Theorem~\ref{thm:lower}.
\end{proof}

\begin{remark}
    If we take $x=14/25$ and $y=3/8$, then we can improve the lower bound of $\Bee_1$ to $\Bee_1> 1.1030$. It is likely that this value can be further improved a bit but we did not try hard to do so as we do not see how to improve the value to get really close to the upper bound $\Bee_1 < 1.1833$ given in the next section.
\end{remark}

\section{Upper bounds of $\Bee$, $\Bee_d$ and $\Bee_1$} \label{sec:four}

In this section, we will give upper bounds of $\Bee$, $\Bee_d$ and $\Bee_1$. The precise values of these constant are still open. 
\subsection{The bounds $\Bee\le4$ and $\Bee_d \le 4-C/d^2$}

For any $W\in\cS$, let $m(W):=\dist\bigl(0,\partial W(\D)\bigr)$. By (T1), $m(W)\ge1/4$.

\begin{theorem}\label{thm:upper}
Let $(W,B)$ be admissible. Then $B(0)\ge m(W)\ge\frac14$; hence $\Bee\le4$, and by Theorem~\ref{thm:main} this recovers Smale's theorem $\M\le4$. If moreover $B$ is nonconstant and $a\in\D$ is a zero of $B$, then
$$
B(0)\ \ge\ \frac{(1+|a|)^2}{4|a|}\,m(W)\ \ge\ \frac{(1+|a|)^2}{16|a|}>\frac14 ,
$$
and if $\deg B=d\ge1$ then
$$\frac1{B(0)} < 4-\frac{529}{576}\frac{(\log 2)^2}{d^2}.$$
Consequently $\Bee_d\le 4-\frac{529}{576}\frac{(\log 2)^2}{d^2}$ for every $d\ge1$.
\end{theorem}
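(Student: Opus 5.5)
The plan is to work with a single-valued inverse branch of $F=WB$. Write $b=B(0)=F'(0)$ and $m=m(W)$. Because $F'\ne0$ on $\D$ and $F(0)=0$, there is a local inverse $g=F^{-1}$ near $0$ with $g(0)=0$ and $g'(0)=1/b$.

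\emph{Step 1: continuation of $g$ to $D(0,m)$.} I would continue $g$ analytically along every path in $D(0,m)$ starting at $0$. Since $F$ has no critical points, continuation along a path $\gamma$ can only fail if the lifted path $g\circ\gamma$ leaves every compact subset of $\D$, i.e.\ accumulates on $\T$.
\begin{itemize}
\item Since $|B(z)|\to1$ as $|z|\to1$, we get $\liminf_{|z|\to1}|F(z)|=\liminf_{|z|\to1}|W(z)|\ge m$.
\item Along a lift accumulating on $\T$ we would have $|F(g(\gamma(s)))|=|\gamma(s)|$, which stays below a number $<m$. This contradicts the previous bullet.
\end{itemize}
So every path in $D(0,m)$ lifts. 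By the monodromy theorem, $g$ extends to a holomorphic map $g:D(0,m)\to\D$ with $F\circ g=\mathrm{id}$; in particular $g$ is injective. The Schwarz lemma applied to $z\mapsto g(mz)$ gives $m/b\le1$, i.e.\ $b\ge m$. Combined with (T1) this yields $B(0)\ge m(W)\ge\frac14$, hence $\Bee\le4$, and $\M\le4$ by Theorem~\ref{thm:main}. If $B\equiv1$ everything is trivial.

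\emph{Step 2: the zero $a$ of $B$.} Now let $a\in\D$ be a zero of $B$, so $F(a)=0=F(0)$ with $a\ne0$. Since $F\circ g=\mathrm{id}$, the point $a$ cannot lie in $g(D(0,m))$: otherwise $a=g(w)$ with $w=F(a)=0$, giving $a=g(0)=0$. Thus $h(z):=g(mz)$ maps $\D$ into $\D\setminus\{a\}$ with $h(0)=0$.

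I would then invoke the classical sharp estimate (Pick's omitted-value bound, via the universal covering / Koebe-type extremal map of the punctured disk) that such $h$ satisfies
$$|h'(0)|\le\frac{4|a|}{(1+|a|)^2}.$$
This gives $m/b\le 4|a|/(1+|a|)^2$, which is the first claimed inequality. The second follows from $m\ge\frac14$, and it is $>\frac14$ because $|a|<1$.

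\emph{Step 3: the degree bound.} For $\deg B=d$, let $r=\min_k|a_k|$ and write $r=1-\delta$. Then two lower bounds hold simultaneously:
$$B(0)\ge\frac{(1+r)^2}{16r}=\frac14\Bigl(1+\frac{\delta^2}{4r}\Bigr)\qquad\text{and}\qquad B(0)=\prod_k|a_k|\ge r^d.$$
So $1/B(0)\le\min\bigl(4(1+\delta^2/(4r))^{-1},\,r^{-d}\bigr)$. The second term is below $4-$(something) unless $r^{d}\lesssim\frac14$, i.e.\ unless $\delta\gtrsim\log4/d$; in that regime the first term gives a decrement of order $\delta^2\gtrsim(\log2)^2/d^2$. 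I would case-split at a threshold $\delta_0=c\log2/d$ and optimize the elementary inequalities ($-\log(1-\delta)$ versus $\delta$, and $1/(1+x)$ versus $1-x+x^2$) to extract the explicit constant $\frac{529}{576}(\log2)^2/d^2$. Strictness comes from the strictness in the bounds used (a strict Schwarz-type inequality, or $r<1$).

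\emph{Main obstacle.} I expect the main difficulties to be the path-lifting in Step 1, specifically justifying that a lift cannot approach $\T$ using only $\liminf|W|\ge m$, and citing or proving the sharp omitted-point bound in Step 2. The constant-chasing in Step 3 is routine but fiddly.
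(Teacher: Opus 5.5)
Your proposal is correct and is essentially the paper's proof. The paper builds the univalent inverse by showing that $F$ maps the component $V_t$ of $\{|F|<t\}$ containing $0$ properly, hence biholomorphically, onto $D(0,t)$ for $t<m(W)$; your path-lifting and monodromy argument does the same job. It then applies the Schwarz lemma and Pick's estimate to this inverse, and combines the Pick bound for every zero with $B(0)=\prod_j|a_j|$ to get $V\le 4-4\tanh^2(\log V/(2d))$ for $V=1/B(0)$. Your two-case split works equally well: with $\delta_0=\log 2/d$ it even gives $1/B(0)<4-(\log 2)^2/d^2$.

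One thing to fix is the citation in Step 2. The bound $4|a|/(1+|a|)^2$ holds for \emph{univalent} $h:\D\to\D\setminus\{a\}$ with $h(0)=0$, and your $h$ is univalent since $F\circ g=\mathrm{id}$. A universal-covering argument for $\D\setminus\{a\}$ only yields the strictly larger hyperbolic-metric bound, so invoke Pick's theorem for univalent maps rather than the covering map.
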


\begin{proof}
We first show that the inverse branch of $F$ fixing $0$ extends
to the entire disk $D(0,m(W))$. Because $W$ is a homeomorphism from $\D$ onto $W(\D)$,
\begin{equation}\label{eq:boundary-W}
\liminf_{|z|\to1}|W(z)|\ge m(W).
\end{equation}
Indeed, if this failed, there would be an $s<m(W)$ and a sequence
$z_j$ with $|z_j|\to1$ such that $|W(z_j)|\le s$.
The closed disk $\overline{D(0,s)}$ is a compact subset of
$W(\D)$, so its inverse image under $W$ is a compact subset of
$\D$, a contradiction.

A finite Blaschke product extends continuously across the unit
circle and has modulus one there. Consequently,
$$
|B(z)|\longrightarrow1\qquad\text{as }|z|\to1.
$$
Together with \eqref{eq:boundary-W}, this implies
\begin{equation}\label{eq:boundary-F}
\liminf_{|z|\to1}|F(z)|\ge m(W).
\end{equation}

Let $0<t<m(W)$, and let $V_t$ be the connected component of $\{z\in\D:|F(z)|<t\}$ containing $0$. By \eqref{eq:boundary-F}, there is an $r_t<1$
such that $|F(z)|>t$ for $r_t<|z|<1$. Hence $V_t$ is relatively compact in $\D$. Moreover,
$|F(z)|=t$ for $z\in\partial V_t$.
For if a boundary point had $|F|<t$, a small connected
neighborhood of that point would lie in $\{|F|<t\}$ and meet
$V_t$, contradicting the definition of $V_t$ as a component.

It follows that $F:V_t\longrightarrow D(0,t)$
is proper, i.e. the inverse image of a compact subset of $D(0,t)$
cannot accumulate on $\partial V_t$. Its image is both open
and closed in $D(0,t)$, so the map is surjective.
Since $F'$ never vanishes, it is a proper local
biholomorphism, hence an unbranched covering.
The target disk is simply connected and $V_t$ is connected,
so this covering has one sheet. Thus $F:V_t\longrightarrow D(0,t)$ is biholomorphic.  Its inverse $f_t:D(0,t)\to V_t\subset\D$ satisfies $f_t(0)=0$, $f_t'(0)=1/B(0)$, and $f_t$ omits every zero $a$ of $B$. 

The Schwarz lemma applied to $\zeta\mapsto f_t(t\zeta)$, a holomorphic self-map of $\D$ fixing $0$, gives $t\,f_t'(0)=t/B(0)\le1$. Letting $t\to m(W)$ yields $B(0)\ge m(W)\ge1/4$ by (T1), hence $\Bee\le4$.

Now let $B$ be nonconstant and $a$ a zero of $B$. We then estimate $1/B(0)$ by using the following omitted-point estimate of Pick \cite{MaMinda}.

\begin{lemma}[Omitted-point estimate]\label{lem:pick}
Suppose that $h:\D\to\D$ is univalent, $h(0)=0$, and $a\in\D\setminus h(\D)$. If $r=|a|>0$, then
$$
|h'(0)|\le\frac{4r}{(1+r)^2}.
$$
\end{lemma}

Applying Lemma \ref{lem:pick} to $\zeta\mapsto f_t(t\zeta)$, we have 
$t/B(0)=tf_t'(0)\le \frac{4|a|}{(1+|a|)^2}$.
 Letting $t\to m(W)$ and using also $m(W)\ge1/4$ from (T1), we have
 $B(0)\ge \frac{(1+|a|)^2}{4|a|}m(W) \ge \frac{(1+|a|)^2}{16|a|}> 1/4$.

Let $V:=1/B(0) <4$ and $r:=|a|$. Then
$$\frac{4r}{(1+r)^2}\ge\frac V4.$$
The equation $\frac{4r}{(1+r)^2}=\frac V4$ has two solutions and it is easy to compute that the smaller positive solution of the equation $\frac{4r}{(1+r)^2}=\frac V4$ is
$$\rho(V)=\frac{2-\sqrt{4-V}}{2+\sqrt{4-V}}=\frac{1-\sqrt{1-V/4}}{1+\sqrt{1-V/4}}   .$$
So, since $r\mapsto 4r/(1+r)^2$ is increasing on $(0,1)$, every zero $a_j$ of $B$ satisfies $|a_j|\ge\rho(V)$. Since $B(0)=\prod_{j=1}^d|a_j|$, we have $1/V\ge\rho(V)^d$ which is equivalent to 
$\frac{\log V}{2d}\le\tanh^{-1}(\sqrt{1-V/4})$. Hence, we have $V\le 4-4\tanh^2\Bigl(\frac{\log V}{2d}\Bigr)$.

If $V\ge2$, then $\log V\ge\log2$ and hence
$V\le 4-4\tanh^2\Bigl(\frac{\log 2}{2d}\Bigr)$.

It is easy to check that $\tanh x>x-\frac{x^3}{3}$ when $x>0$. So for $0<x< (\log 2)/2$, $\tanh x> x-\frac{x^3}{3}=x(1-\frac{x^2}{3})\ge x(1-\frac{(\log2)^2}{12})>\frac{23}{24}x$. Therefore, $V\le 4-4\tanh^2\Bigl(\frac{\log 2}{2d}\Bigr) < 4-\frac{529}{576}\frac{(\log 2)^2}{d^2}$. 
Since $\frac{529}{576}(\log 2)^2<2$ and $d\ge 1$, $\frac{529}{576}\frac{(\log 2)^2}{d^2}<2$ and hence $V< 4-\frac{529}{576}\frac{(\log 2)^2}{d^2}$ when $V<2$ and we are done. It follows that $\Bee_d\le4-\frac{529}{576}\frac{(\log2)^2}{d^2}$. (The strict inequality for $\Bee_d$ asserted in Theorem~\ref{thm:upperN} follows from the attainment result, Corollary~\ref{cor:strict}.)

\end{proof}

\begin{remark}
Theorem~\ref{thm:upper} shows that under the reformulation $\M=\Bee$, Smale's bound $4$ is exactly Koebe's constant: $1/B(0)\le 1/m(W)\le4$. If $1/B(0)$ is close to $4$ then $m(W)$ is close to $1/4$, so by compactness of $\cS$ and the uniqueness part of Koebe's theorem $W$ is close (locally uniformly) to a rotation of the Koebe function, and all zeros of $B$ are close to $\T$ with $\sum_j(1-|a_j|)\approx\log 4$; in particular $\deg B\to\infty$. Since $f_t$ omits all zeros of $B$ simultaneously, one may expect that the hyperbolic metric of $\D\setminus\{a_1,\dots,a_d\}$ gives a better bound than $4-C/d^2$. See Section~\ref{sec:cr}.
\end{remark}

\subsection{Degree one}\label{sec:grunsky}

\begin{theorem}[Degree one]\label{thm:sqrt75}
Let $W\in\cS$ and let $B(z)=\lambda(z-a)/(1-\overline a\,z)$ be a degree-one
Blaschke factor. If $(W,B)$ is an admissible pair, then
$$
|a|>\sqrt{\frac57}.
$$
Consequently
$$
\frac1{|B(0)|}<\sqrt{\frac75}
\qquad\text{and}\qquad
\Bee_1\le\sqrt{\frac75}.
$$
(The strict inequality $\Bee_1<\sqrt{7/5}$ follows from Corollary~\ref{cor:strict}.)
\end{theorem}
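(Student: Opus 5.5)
The plan is to turn the admissibility condition into a winding-number statement about $\Phi=g+zB'/B$, and then obtain the contradiction for $|a|\le\sqrt{5/7}$ from a Grunsky-type bound on $g=zW'/W$. By rotating $z$ and adjusting $\lambda$ (which changes neither admissibility nor $|a|$), we may assume $a=r\in(0,1)$. As noted in Section~\ref{sec:WB}, $\Phi$ is meromorphic on $\D$ with $\Phi(0)=1$, its only pole is the simple pole at $r$, and it has no zeros. The argument principle then gives the winding of $\Phi$ on $|z|=\rho$: it is $0$ for $\rho<r$ and $-1$ for $r<\rho<1$. Write
$$
\beta(z):=\frac{zB'(z)}{B(z)}=\frac{(1-r^2)z}{(z-r)(1-rz)} .
$$
This function has a simple zero at $0$ and a simple pole at $r$, so its winding on every circle $|z|=\rho>r$ is $0$. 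The same holds for $g$, which is holomorphic and zero-free.

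The core step is a Rouché-type comparison on a well-chosen circle $|z|=\rho$ with $\rho>r$. Write $\Phi=\beta\,(1+g/\beta)$ on that circle. If $1+g/\beta$ stays in a half-plane avoiding $0$ (for example $\Rea(g/\beta)>-1$, or $g/\beta$ stays off the ray $(-\infty,-1]$), then the winding of $\Phi$ equals that of $\beta$, namely $0$. This contradicts the value $-1$ found above. So admissibility forces the value of $g/\beta$ to meet $(-\infty,-1]$ somewhere on every circle $|z|=\rho$ with $\rho>r$, i.e.\ $g(z)=-s\,\beta(z)$ for some $s\ge1$ and some $|z|=\rho$. I would concentrate on the point of the circle where $\beta$ is most negative, $z=-\rho$, and on a short arc near $z=\rho$. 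On the arc near $z=\rho$, $\beta$ is large and positive, and near $z=-\rho$ it equals $-\rho(1-r^2)/((\rho+r)(1+r\rho))$.

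To control $g$, I will use the Grunsky inequality for $W\in\cS$ in the form that describes the range of $\log g$. The basic form is $|\log g(z)|\le\log\frac{1+|z|}{1-|z|}$. The refined second-order form constrains the argument and modulus of $g$ jointly, and is sharper near the real axis after the rotation. These bounds will show that $\Rea(g/\beta)>-1$ on the whole circle as soon as $\rho$ is chosen suitably in terms of $r$, for instance $\rho\downarrow r$, or an explicit $\rho(r)$ optimizing the estimate. I expect the resulting elementary inequality in $(r,\rho)$ to fail exactly when $r^2>5/7$. That would give $|a|>\sqrt{5/7}$, hence $1/|B(0)|=1/|a|<\sqrt{7/5}$, and so $\Bee_1\le\sqrt{7/5}$.

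The main obstacle is that the crude Grunsky bound $|\log g|\le\log\frac{1+\rho}{1-\rho}$ is too weak. Near $z=-\rho$, $|\beta|$ is only about $(1-r^2)/(2(1+r^2))$, while $|g|$ may be as large as $(1+\rho)/(1-\rho)$, so no naive Rouché inequality holds. I will therefore first try to exploit the fact that only the single direction $g/\beta\in(-\infty,-1]$ must be excluded: $g$ has to equal $-s\beta$ with $s\ge1$ at a single point. Combining the Grunsky bound at that point with the bound at $z=0$ (via $g(0)=1$ and the coefficient $a_2$ of $W$, using $|a_2|\le2$), I expect to reach the constant $5/7$. If a one-point argument is insufficient, the fallback is to apply the full Grunsky quadratic form to the two points $0$ and $r$, using $F(0)=F(r)=0$. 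In that case the $2\times2$ Grunsky matrix inequality should produce the quadratic condition $7r^2>5$ directly.
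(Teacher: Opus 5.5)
\paragraph{There is a genuine gap.}
Your topological set-up is right. On $|z|=\rho\in(r,1)$ the winding of $\Phi$ is $-1$ and that of $\beta$ is $0$, so admissibility forces $g/\beta$ to meet $(-\infty,-1]$. The paper uses the same mechanism, but on the circle $|z|=r$ itself, through the holomorphic homotopy $P_t(z)=(z-r)(1-rz)e^{t\ell(z)}+(1-r^2)z$ with $\ell=\log g$. Since $t\ell$ stays in the disk $|w|\le L(r)=\log\frac{1+r}{1-r}$, it suffices that the single point $-\beta(z)$ lies outside $\exp\bigl(\overline{D(0,L(r))}\bigr)$. (On your circles you could get the same reduction by homotoping $\beta+e^{t\ell}$ instead of using a ray, then letting $\rho\downarrow r$.)

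The whole theorem lies in verifying that condition, and your proposal does not do it. It also misplaces the difficulty. You have a sign error: $\beta(-\rho)=\frac{(1-r^2)\rho}{(\rho+r)(1+r\rho)}>0$. In fact, on $|z|=\rho>r$ the curve $1/\beta=\frac{(1+r^2)-r(z+1/z)}{1-r^2}$ is an ellipse in the open right half-plane, so $\beta$ is never negative there. At $z=-\rho$ nothing has to be done: Grunsky already gives $|\arg g|\le L(\rho)<\pi$ (note $L(\sqrt{5/7})\approx 2.478$), whereas every $-s\beta(-\rho)$ with $s\ge1$ is negative. Rouch\'e is beside the point.

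Your primary target $\Rea(g/\beta)>-1$ cannot be reached. The Grunsky disk contains values with $\arg g>\pi/2$, while $\beta(-\rho)\approx\frac{1-r^2}{2(1+r^2)}$ is small, so $\Rea(g/\beta)$ may be far below $-1$ there. That disk is in fact the exact value region of $\log g(z)$ over $\cS$, so no pointwise ``refined'' Grunsky bound helps.

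\paragraph{Where the work actually is.}
The real obstacle is on the arcs where $\arg(-\beta)\in(\pi/2,L)$ and $|\beta|$ is moderate. There one needs $(\log|\beta|)^2+(\arg(-\beta))^2>L^2$. The paper proves this on $|z|=r$ as follows:
\begin{itemize}
\item it writes $1/\beta(re^{i\theta})=c(1-\cos\theta)+i\sin\theta$ with $c=\cosh L(r)$, and substitutes $x=c\tan(\theta/2)$;
\item it uses Cauchy--Schwarz bounds with slopes $11/4$ on $(0,1]$ and $10/3$ on $[1,13/10]$;
\item beyond $13/10$ it uses the trivial bound $b(x)\ge b(13/10)>L_0$;
\item it passes from $s_0=\sqrt{5/7}$ to smaller $r$ using the $1$-Lipschitz dependence on $L$.
\end{itemize}
So the crude Grunsky bound does suffice, contrary to your expectation. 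Also, $5/7$ is not a threshold at which some inequality fails ``exactly''. It is chosen so that $\cosh L(s_0)=6$, with a margin of roughly $6.154$ against $L_0^2\approx 6.140$.

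\paragraph{The fallbacks do not fill the gap.}
The relations $F(0)=F(r)=0$ hold for every $W\in\cS$ and every factor vanishing at $r$. So a Grunsky inequality at the points $0$ and $r$ never sees the hypothesis $F'\ne0$ and cannot restrict $r$. Likewise $|a_2|\le2$ holds on all of $\cS$, so it adds no constraint on $g$ along the circle. As written, the proposal is a correct framework missing the estimate that proves the theorem.
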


\begin{proof}
Since $B(0)>0$ and $a$ is a zero of $B$, $a\neq 0$. So we can write $a=se^{i\varphi}$ with $0<s<1$.
The rotated function $\widetilde W(z)=e^{-i\varphi}W(e^{i\varphi}z)$ still lies in $\cS$, and
$$
B(e^{i\varphi}z)=-\lambda e^{i\varphi}B_s(z),\qquad
B_s(z)=\frac{s-z}{1-sz}.
$$
Thus $(WB)'$ is zero-free if and only if $(\widetilde W B_s)'$ is zero-free.
It is therefore enough to treat $B_s$, and enough to show that no pair $(W,B_s)$ with $s\le\sqrt{5/7}$ is admissible.

Recall that $g(z)=zW'(z)/W(z)$, so that $g(0)=1$.
Univalence implies that $g$ is holomorphic and zero-free on $\D$, so there is a unique holomorphic logarithm $\ell$ of $g$ with $\ell(0)=0$.
Grunsky's inequality~\cite{Grunsky1932} (see also \cite[Chapter~3]{Duren}) gives
\begin{equation}\label{eq:grunsky}
\bigl|\ell(z)\bigr|
\le
L(|z|),\qquad
L(r)=\log\frac{1+r}{1-r}.
\end{equation}
Notice that $L$ is increasing on $(0,1)$.

The logarithmic derivative of the Blaschke factor gives
$$
\beta_s(z)
:=\frac{zB_s'(z)}{B_s(z)}
=\frac{(1-s^2)z}{(z-s)(1-sz)}.
$$
Admissibility of $(W,B_s)$ implies that $\frac{zF'}{F}=g+\beta_s$ has no zero in $\D$ where $F=WB_s$.
To test this, we cancel the poles of $\beta_s$ and consider $(z-s)(1-sz)\,g(z)+(1-s^2)z$ and then its holomorphic homotopy
\begin{equation}\label{eq:Pt}
P_t(z)=(z-s)(1-sz)\,e^{t\ell(z)}+(1-s^2)z,\qquad 0\le t\le 1.
\end{equation}
Each $P_t$ is holomorphic in $\D$.
A zero of $P_1$ is a zero of $(WB_s)'$, because
$$
(WB_s)'(z)=-\frac{W(z)}{z}\,\frac{P_1(z)}{(1-sz)^2},
$$
the identity at the origin being understood by continuity, and because $W(z)/z$ does not vanish in $\D$.

For $w\in\C\setminus\{0\}$ write $d(w)$ for the least modulus of a logarithm of $w$:
$$
d(w)=\sqrt{(\log|w|)^2+(\Arg w)^2},\qquad -\pi<\Arg w\le\pi.
$$
(The two choices $\pm\pi$ on the negative axis give the same value, so $d$ is continuous on $\C\setminus\{0\}$.)
The comparison on $|z|=s$ needs only this least modulus, not a holomorphic branch of $\Log$ along the whole circle.

Let $\gamma$ be the circle $|z|=s$, traversed once positively.
The point $z=s$ is not a zero of any $P_t$, since $P_t(s)=s(1-s^2)\ne0$.
At every other point of $\gamma$ one has $P_t(z)=0$ if and only if $e^{t\ell(z)}=-\beta_s(z)$.
Combined with~\eqref{eq:grunsky} and $0\le t\le1$, this implies that if
\begin{equation}\label{eq:crit}
d\bigl(-\beta_s(z)\bigr)>L(s)
\qquad\bigl(|z|=s,\ z\ne s\bigr),
\end{equation}
then $P_t$ cannot vanish on $\gamma$.

As long as $P_t$ has no zero on $\gamma$, the winding number of $P_t\circ\gamma$ about $0$ is well-defined.
That winding number equals the number of zeros of $P_t$ inside $\gamma$, counted with multiplicity.
It depends continuously on $t$, because $P_t(z)$ varies continuously with $t$, uniformly for $z\in\gamma$, and never hits $0$ on $\gamma$.
An integer-valued continuous function of $t\in[0,1]$ is constant.
Thus the number of zeros of $P_t$ inside $\gamma$ is independent of $t$.

At $t=0$,
$$
P_0(z)=-sz^2+2z-s.
$$
The roots are $\bigl(1\pm\sqrt{1-s^2}\bigr)/s$.
The larger root is at least $1/s>1$, so it lies outside $\gamma$.
For the smaller root, $\sqrt{1-s^2}<1$ rearranges to $1-\sqrt{1-s^2}<s^2$, hence the root lies in $(0,s)$.
So $P_0$ has exactly one zero inside $\gamma$, and therefore so does $P_1$.
So whenever ~\eqref{eq:crit} holds, the pair $(W,B_s)$ cannot be admissible. We will show that ~\eqref{eq:crit} holds for all $0<s\le s_0:=\sqrt{\frac57}$. The number $s_0$ is chosen so that $\cosh L(s)$ is an integer at $s=s_0$. In fact, $\cosh L(s_0)=6$ and $L(s_0)<\pi$.

To begin with, let $s\in(0,1)$ and we try to get lower estimates of $d\bigl(-\beta_s(z)\bigr)$ by 
dividing the circle $\gamma=\{|z|=s\}$ into the upper open semicircle, the lower open semicircle, and the point $z=-s$. The point $z=s$ has already been excluded, since $P_t(s)\ne0$.
On the upper semicircle $z=se^{i\theta}$ with $0<\theta<\pi$, a direct computation gives
$$
\frac1{\beta_s(z)}=c(1-\cos\theta)+i\sin\theta,
\qquad
c:=\cosh L(s)=\frac{1+s^2}{1-s^2}.
$$
The right-hand side lies in the open first quadrant, so $-\beta_s(z)$ lies in the open second quadrant.
The substitution $x=c\tan(\theta/2)$ runs through $(0,\infty)$ as $\theta$ runs through $(0,\pi)$, and gives
\begin{equation}\label{eq:Jb}
d\bigl(-\beta_s(se^{i\theta})\bigr)
=\sqrt{A\bigl(L(s),x\bigr)^2+b(x)^2},
\end{equation}
where
$$
A(L,x)=\log\frac{\cosh^2L+x^2}{2x\cosh L\sqrt{1+x^2}},
\qquad
b(x)=\frac\pi2+\arctan x
$$
and we also let $\mathcal J\bigl(L(s),x\bigr)
:=\sqrt{A\bigl(L(s),x\bigr)^2+b(x)^2}$ (not to be confused with $J(x,y)$ of Section~\ref{sec:quartic}).

The lower semicircle of $\gamma$ repeats these values by conjugation. If $z$ lies on the upper semicircle, then $\beta_s(\overline z)=\overline{\beta_s(z)}$, so $-\beta_s(\overline z)$ is the conjugate of $-\beta_s(z)$. The least-modulus $d$ depends only on the modulus and on the absolute value of the argument, hence
$$d\bigl(-\beta_s(\overline z)\bigr)=d\bigl(-\beta_s(z)\bigr).$$
No new values occur on the lower semicircle.
The remaining point is $z=-s$. There
$$d\bigl(-\beta_s(-s)\bigr)=\sqrt{\log^2(2c)+\pi^2}\ge\pi.$$ Since $L(s)$ is increasing on $(0,1)$, for $s\le s_0$ we have $L(s)\le L(s_0)=2.47788\ldots <\pi$.
Hence, ~\eqref{eq:crit} holds at $z=-s$ for all $0<s\le s_0$.

At $s=s_0$ write $L_0=L(s_0)$, $A(x)=A(L_0,x)$ and $E(x)=A(x)^2+b(x)^2$, so that $\mathcal J(L_0,x)=\sqrt{E(x)}$.
To get a lower estimate of $E$, we will try to get a lower estimate of $A+kb$ by using the Cauchy--Schwarz inequality $A^2+b^2\ge\frac{(A+kb)^2}{1+k^2}$. The trick is to choose $k$ so that $A+kb$ is monotone and hence one can estimate its minimum at the end points of some intervals.

Differentiating $A$ and $b$ gives
$$
-(1+x^2)A'(x)
=\frac1x+\frac{70x}{36+x^2},
\qquad
(1+x^2)b'(x)=1.
$$

On the interval $0<x\le1$, we have $36+x^2\le37$, so
$$
-(1+x^2)A'(x)
\ge\frac1x+\frac{70}{37}x
\ge2\sqrt{\frac{70}{37}}.
$$
The last step follows from the AM--GM inequality. Hence,
$$-(1+x^2)A'(x)\ge2\sqrt{\frac{70}{37}}>\frac{11}{4}=(1+x^2)b'(x)\cdot\frac{11}{4}$$
so that by taking $k=11/4$, we have $(A+(11/4)b)'<0$ for $0<x\le 1$ which means $A+(11/4)b$ has a minimum at $x=1$. Since $A_1:=A(1)=\log\frac{37}{12\sqrt2}$ and $b(1)=3\pi/4$, we have for $0<x\le 1$, $$E(x)\ge\frac{\bigl(A(1)+(11/4)b(1)\bigr)^2}{1+(11/4)^2}=\frac{\bigl(4\log\frac{37}{12\sqrt2}+33\pi/4\bigr)^2}{137}=6.1538\ldots.$$

On the next interval one wants $A+kb$ increasing, so that the same endpoint $x=1$ is again the minimum.
The slope $k=10/3$ and the cut $x=13/10$ are chosen together to make this elementary.
One finds
$$
x(1+x^2)\Bigl(A+\frac{10}3 b\Bigr)'
=\frac{2x^2(1+x^2)}{36+x^2}-1-2x^2+\frac{10}3 x.
$$
The first summand is at least $4/37$ for $x\ge1$, with equality at $x=1$.
The quadratic $-1-2x^2+(10/3)x$ has its maximum at $x=5/6<1$, so it is decreasing on $[1,13/10]$, and its value at the right endpoint is $-7/150$.
Hence the derivative is at least
$$
\frac4{37}-\frac7{150}=\frac{341}{5550}>0.
$$
Thus $A+(10/3)b$ is increasing on $[1,13/10]$, its minimum is $A_1+(10/3)(3\pi/4)=A_1+5\pi/2$, and
\begin{equation}\label{eq:I2}
E(x)\ge\frac{\bigl(3A_1+15\pi/2\bigr)^2}{109}=6.1543\ldots
\qquad(1\le x\le13/10).
\end{equation}
The cut $13/10$ is also large enough that, past it, $b^2(x)$ alone already exceeds $L_0^2=6.1399\ldots$. In fact, since $b$ is increasing,
\begin{equation}\label{eq:I3}
E(x)\ge b(x)^2\ge b\Bigl(\frac{13}{10}\Bigr)^2
=(2.48589\ldots)^2=6.1796\ldots
\qquad\bigl(x\ge13/10\bigr).
\end{equation}

Combining all the three lower bounds of $E$, we have $E(x)>L_0^2$ for $x>0$ so that
$\mathcal J(L_0,x)>L_0$ and~\eqref{eq:crit} holds at $s=s_0$. It remains to transfer this inequality to every smaller $s$ by showing that ${\mathcal J}(L,x)-L$ does not increase with $L$.

Differentiating $A$ in the first variable gives
$$
\frac{\partial A}{\partial L}(L,x)
=\tanh L\cdot\frac{\cosh^2L-x^2}{\cosh^2L+x^2},
$$
hence $\bigl|\partial A/\partial L\bigr|\le1$ and $|A(L,x)-A(M,x)|\le|L-M|$.
The quantity $\mathcal J(L,x)$ is the Euclidean norm of $\bigl(A(L,x),b(x)\bigr)$, and $b$ is independent of $L$, so
\begin{equation}\label{eq:Lip}
\bigl|\mathcal J(L,x)-{\mathcal J}(M,x)\bigr|\le|L-M|.
\end{equation}
From (\ref{eq:Lip}), we have
$${\mathcal J}(L,x)\ge {\mathcal J}(L_0,x)-|L-L_0|.$$
If $L\le L_0$, then $|L-L_0|=L_0-L$, so
$${\mathcal J}(L,x)\ge {\mathcal J}(L_0,x)-(L_0-L)=\mathcal J(L_0,x)-L_0+L.$$
Subtracting $L$ from both sides,
$${\mathcal J}(L,x)-L\ge \mathcal J(L_0,x)-L_0>0.$$
 Therefore ${\mathcal J}(L,x)>L$ for every $x>0$ and every $L\le L_0$. Since $L$ is increasing on $(0,1)$, for $s\le s_0$ one has $L(s)\le L_0$ and hence ${\mathcal J}(L(s),x)>L(s)$ which is the required criterion ~\eqref{eq:crit} for non-admissibility.
We then conclude that every admissible degree-one pair satisfies $s>s_0=\sqrt{5/7}$ so that $1/B(0)=1/B_s(0)=1/s {<1/s_0}$ and hence $\Bee_1\le1/s_0=1.1832\ldots$. 
\end{proof}

\section{Every degree occurs}\label{sec:every}
In this section, we show that the extremal problems defining the constants $\Bee_{d}$ ($d \geq 2$) are not vacuous. That is, for each $d \geq 2$, there exists an admissible Blaschke pair $(W,B)$ with $\deg(B) = d$. In fact, we show that for each $d \geq 2$, there exists a \textbf{regular} admissible Blaschke pair $(W,B)$ with $\deg(B) = d$. This will be established in Theorem \ref{thm:nonempty} as a direct consequence of Lemmas \ref{lem:wdrop} and \ref{lem:domain} below. We also characterise, up to a degenerate case, the functions $W$ that carry a nonconstant factor $B$ (Theorem~\ref{thm:char}), prove that each $\Bee_d$ is attained (Theorem~\ref{thm:attain}), and deduce the strict inequalities of Theorem~\ref{thm:upperN} (Corollary~\ref{cor:strict}).

We first fix some notation to be used in the statement of Lemma \ref{lem:wdrop}. Let $W$ be univalent on a neighbourhood of $\cl\D$, with $W(0)=0$ and $W'\ne0$ on $\cl\D$. On $\T$ set
$$
u(t)=\log\bigl|W(e^{it})\bigr|,
\qquad
v(t)=\frac{d}{dt}\arg W(e^{it}),
$$
and express the logarithmic derivative of $W$ along the unit circle as
\begin{equation}\label{eq:h}
h(t)=u'(t)+iv(t)=ie^{it}\frac{W'(e^{it})}{W(e^{it})}=i\,g(e^{it}).
\end{equation}
Then $h$ is real analytic and zero-free. The curve $t \rightarrow ie^{it} : [0,2 \pi] \rightarrow \mathbb{C}$ has winding number $1$ with respect to the origin, while $W(\T)$ has winding number $1$ with respect to the origin, and $W'(\T)$ has winding number $0$ with respect to the origin, so that $\wind_0(h)=1+0+(-1)=0$. Here and in what follows, by $$\wind_0(\gamma)$$ we mean the winding number of the curve parameterized by $\gamma : [0, 2 \pi] \rightarrow \mathbb{C} \setminus \{0 \}$ with respect to the origin.

On the unit circle $|z|=1$ a single Blaschke factor
$$
b_a(z)=\frac{|a|}{a}\frac{a-z}{1-\overline{a}z},\qquad a=re^{i\theta},
$$
satisfies $|b_a|=1$. A direct computation gives the below quantity is real and positive on $\partial\mathbb{D}$,
$$
\frac{z b_a'(z)}{b_a(z)}=\frac{1-|a|^2}{|z-a|^2}\qquad\bigl(|z|=1\bigr).
$$
Parametrizing $z=e^{it}$ yields the real angular derivative
$$
\frac{d}{dt}\arg b_a(e^{it})=\frac{1-r^2}{|e^{it}-re^{i\theta}|^2}.
$$
(The unimodular constant that normalizes $b_a(0)>0$ does not affect the derivative.)  

For the full Blaschke product $B_r=\prod_{j=1}^n b_{a_j}$ with distinct $a_j=re^{it_j}$, the angular derivatives add and we have
$$
S_r(t)=\frac{d}{dt}\arg B_r(e^{it})=\sum_{j=1}^n\frac{1-r^2}{|e^{it}-re^{it_j}|^2}
$$
and $S_r(t) \to 0$ as $r \to 1$ when $t\neq t_i$. 

In order to build an admissible pair $(W,B_r)$, we want the derivative of $F_r=WB_r$ has no zero in $\mathbb{D}$ . On the unit circle one has the identity
$$ie^{it}\frac{F_r'(e^{it})}{F_r(e^{it})}=h(t)+iS_r(t).$$

The function $F_r$ is holomorphic on a neighbourhood of
$\overline{\mathbb D}$, has no zeros on $\mathbb T$, and has exactly
$n+1$ zeros in $\mathbb D$, counted with multiplicity. Therefore, if
$F_r'$ is nonvanishing on $\mathbb T$, the argument principle yields
$$
\operatorname{wind}_0\left(
t\longmapsto ie^{it}\frac{F_r'(e^{it})}{F_r(e^{it})}
\right)
=1+N_{\mathbb D}(F_r')-(n+1)
=N_{\mathbb D}(F_r')-n,
$$
where $N_{\mathbb D}(F_r')$ is the number of zeros of $F_r'$ in $\D$.
It thus suffices to prove, for $r$ sufficiently close to $1$, that
$h+iS_r$ is nonvanishing and has winding number $-n$ so that $N_{\mathbb D}(F_r')=0$.

The following lemma suggests one situation that we can find  
$\wind_0(h+iS_r)=-n$
and hence $N_{\mathbb D}(F_r')=0$.

\begin{lemma}[Winding drop]\label{lem:wdrop}
Suppose $t_1,\dots,t_n$ are distinct points at which the conditions \begin{equation}\label{eq:min}
u'(t_j)=0,\qquad u''(t_j)>0,\qquad v(t_j)<0.
\end{equation}
are satisfied (note that these conditions may be satisfied at additional points on $\mathbb{T}$, i.e. there is \textbf{no requirement} that $\{t_1, \ldots , t_n \}$ is the entire set of points at which (\ref{eq:min}) holds). Let $B_r$ be the degree-$n$ Blaschke product with zeros $re^{it_j}$ and $B_r(0)=r^n>0$, and write
$$
S_r(t)=\sum_{j=1}^n\frac{1-r^2}{|e^{it}-re^{it_j}|^2}
=\frac{d}{dt}\arg B_r(e^{it}).
$$
For all $r<1$ sufficiently close to $1$, the product $F_r=WB_r$ satisfies $F_r'\ne0$ on $\cl\D$. In particular $(W/W'(0),B_r)$ is a regular admissible pair of degree $n$.
\end{lemma}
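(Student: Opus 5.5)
The plan is to use the argument-principle reduction stated just before the lemma. If $h+iS_r$ has no zero on $[0,2\pi]$, then $F_r'\ne0$ on $\T$, because on the circle $ie^{it}F_r'/F_r=h+iS_r$ and $F_r$ has no zeros on $\T$. Moreover $N_{\D}(F_r')=\wind_0(h+iS_r)+n$. So it suffices to show that, for $r$ close to $1$, $h+iS_r$ is zero-free and $\wind_0(h+iS_r)=\wind_0(h)-n=-n$. Regularity is then automatic: $W$ is univalent on a neighbourhood of $\cl\D$, so $W/W'(0)\in\cS$, and $B_r(0)=r^n>0$.

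First, fix $\delta>0$ so small that the intervals $I_j=[t_j-\delta,t_j+\delta]$ are pairwise disjoint and, on each $I_j$:
\begin{itemize}
\item $u'<0$ on $[t_j-\delta,t_j)$ and $u'>0$ on $(t_j,t_j+\delta]$, using $u'(t_j)=0$ and $u''(t_j)>0$;
\item $v<v(t_j)/2<0$;
\item $h(I_j)$ lies in a small disk around $iv(t_j)$, contained in the open lower half-plane.
\end{itemize}
Let $m=\min_{\T}|h|>0$. On the complement $K$ of $\bigcup_j I_j$ each term of $S_r$ is at most $(1-r^2)/\dist(e^{it},re^{it_j})^2$. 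Hence $S_r\to0$ uniformly on $K$, and also at the endpoints $t_j\pm\delta$. For $r$ close to $1$ we therefore get $|S_r|<m/2$ on $K$, and the same bound shows that the contributions of the other spikes $k\ne j$ are uniformly small on $I_j$.

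Next, zero-freeness and the local winding. On $K$, $|h+iS_r|\ge m/2>0$, and the straight-line homotopy $h+isS_r$, $s\in[0,1]$, never vanishes there. So the argument change of $h+iS_r$ along each arc of $K$ equals that of $h$, up to an error that is $o(1)$ as $r\to1$. On $I_j$ the real part of $h+iS_r$ is $u'$, which vanishes only at $t_j$. There the imaginary part is
\[
v(t_j)+S_r(t_j)\ge v(t_j)+\frac{1+r}{1-r}>0
\]
for $r$ near $1$. So $h+iS_r$ has no zero on $I_j$. We now track the path as $t$ runs through $I_j$:
\begin{itemize}
\item it starts near $iv(t_j)$ with negative real part, so at argument close to $-\pi/2$ from the left;
\item it stays in the open left half-plane until $t_j$;
\item it crosses the imaginary axis exactly once, at a point on the positive imaginary axis;
\item it stays in the open right half-plane afterwards, ending near $iv(t_j)$ at argument close to $-\pi/2$ from the right.
\end{itemize}
A path that crosses the imaginary axis only once, through $i\R_{>0}$, with endpoints on opposite sides just left and right of $-i|v(t_j)|$, has a continuous argument change of $-2\pi+o(1)$. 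Concretely, it passes from the third quadrant through the second and first quadrants into the fourth, which is a clockwise turn. Over the same interval, the argument change of $h$ is $o(1)$, since $h(I_j)$ lies in a small disk away from $0$.

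Summing over the arcs of $K$ and the $n$ intervals $I_j$, the total argument change of $h+iS_r$ equals that of $h$ minus $2\pi n$, up to an error that tends to $0$. Since both totals are integer multiples of $2\pi$, this gives $\wind_0(h+iS_r)=\wind_0(h)-n=-n$, hence $N_{\D}(F_r')=0$, which proves the lemma.

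The main obstacle is making the quadrant-crossing argument rigorous and uniform in $r$. One must ensure that on $I_j$ the real part does not change sign anywhere except at $t_j$, which is why $\delta$ is fixed before $r$. One must also ensure that the imaginary part at $t_j$ is positive, which holds because the spike height $(1+r)/(1-r)$ dominates $|v(t_j)|$. Finally, the argument change of $h$ itself over $I_j$ must be negligible, which follows from the third choice of $\delta$ above.
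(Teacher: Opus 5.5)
Your proof is correct, but it organizes the winding count differently from the paper.

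The paper first gets zero-freeness on $\T$ from the finiteness of the stationary set $E$ of the real-analytic function $u$: it requires $S_r(t)<|v(t)|$ pointwise at the finitely many $t\in E\setminus\{t_j\}$. It then studies the linear homotopy $H_r(t,\lambda)=h(t)+i\lambda S_r(t)$ on the rectangle $[0,2\pi]\times[0,1]$. Its zeros are exactly the $n$ points $(t_j,\lambda_j)$ with $\lambda_j=-v(t_j)/S_r(t_j)\in(0,1)$, each with Jacobian determinant $u''(t_j)S_r(t_j)>0$. A Stokes/local-degree count over the boundary of the rectangle then gives $\wind_0(h)-\wind_0(h+iS_r)=n$.

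You instead localize on the circle. Off the spikes $S_r\to0$ uniformly, so the argument of $h+iS_r$ tracks that of $h$. On each $I_j$, the sign pattern of $\Rea(h+iS_r)=u'$ and the positivity of $v(t_j)+S_r(t_j)$ force exactly one clockwise turn about $0$.

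Your route is more elementary and makes the mechanism visible. It needs no two-dimensional degree theory and no finiteness of $E$, only $C^2$ behaviour near the $t_j$ and $\min_\T|h|>0$. The paper's route avoids the $\delta$-before-$r$ bookkeeping of argument changes along arcs. It locates the zeros of the homotopy explicitly and reads the count off Jacobian signs.

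One bookkeeping correction. The argument change of $h$ over $I_j$ is a fixed small number depending on $\delta$, not $o(1)$ as $r\to1$. Correspondingly, the change of $h+iS_r$ over $I_j$ is not literally $-2\pi+o(1)$.

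What holds, and what you need, is that the \emph{difference} of the two changes is $-2\pi+o(1)$. The endpoints of the two paths converge to each other as $r\to1$. The continuous argument of $h+iS_r$ starts in $(\pi,3\pi/2)$, while that of $h$ starts in $(-\pi,-\pi/2)$, and both end in $(-\pi/2,0)$. With that correction, your integrality step goes through as written.
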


\begin{proof} 

 Replacing the parameter interval $[0,2\pi]$ by $[\tau,\tau+2\pi]$ for a suitable $\tau$, we may assume that no $t_j$ is an endpoint of the interval; we keep writing $[0,2\pi]$.
 Denote the stationary point set of $u$ by $E \subset [0, 2 \pi]$.
 $E$ is a finite set since $u$ is real-analytic and $u' \not\equiv 0$ (as $u''(t_j)>0$). If at some point $z = e^{it} \in \mathbb{T}$ we had $F_r'(e^{it}) = 0$, then $h(t) + iS_r(t) = u'(t) + i(v(t) + S_r(t)) = 0$ so that $v(t) + S_r(t)=0$ and $$t \in E.$$ 
 Suppose $t \in E \setminus \{t_1, \ldots , t_n \}$, then because $h$ is zero free, we have $v(t)\neq 0$ and hence for $r$ sufficiently close to $1$, $S_r(t)< |v(t)|$ and therefore $v(t) + S_r(t) \neq 0$ which is a contradiction. It follows that we must have $t = t_j$ for some $j=1, \ldots , n$, but in this case we have that $$v(t_j) + S_r(t_j) > 0$$ for all $r$ sufficiently close to $1$. We therefore conclude that $F_r'$ is non-vanishing on $\mathbb{T}$ for all $r$ sufficiently close to $1$. It remains to show that for $r$ sufficiently close to $1$, we have $$\wind_0(i e^{it} \frac{F_r'(e^{it})}{F_r(e^{it})}) = -n.$$ 
 
 To this end, consider the linear homotopy between the loops $h$ and $h + iS_r$, given by $$H_r(t, \lambda) = h(t) + i \lambda S_r(t) : [0,2 \pi] \times [0, 1] \rightarrow \mathbb{C}$$ 
 
 Considering the real and imaginary parts of $H$, analogously to the argument produced to prove that $F_r'(z) \neq 0 ; z \in \mathbb{T}$, 
 we see that if $H_r(t,\lambda)=0$, then $t\in E$.
 For any
$t\in E\setminus\{t_1,\ldots,t_n\}$, like before, we can choose  $r$ so close to $1$ that
$$
|\lambda S_r(t)|\le S_r(t)<|v(t)|,
$$
so that $H_r(t,\lambda)\ne0$ for every $0\le\lambda\le1$.

On the other hand, we can choose $r$ so close to $1$ that each of the values $$-\frac{v(t_j)}{S_r(t_j)} \qquad  (j=1,...,n)$$ belong to $(0,1)$ because
 $$
S_r(t_j)\ge\frac{1+r}{1-r}\longrightarrow+\infty.
$$
 Then at each parameter $t_j$, there is exactly one zero of $H_r(t_j,\lambda)$,
occurring at
$$
\lambda_j=-\frac{v(t_j)}{S_r(t_j)}\in(0,1).
$$
Thus the zeros of $H_r$ on $[0,2\pi]\times[0,1]$ are precisely
$$
(t_j,\lambda_j),\qquad j=1,\ldots,n.
$$
In particular, both boundary loops $H_r(\cdot,0)$ and
$H_r(\cdot,1)$ are nonvanishing. Since $F_r$ is nonvanishing on
$\mathbb T$, this also proves that $F_r'$ is nonvanishing on
$\mathbb T$.

We next compute the winding change. Regard $H_r$ as a map
into $\mathbb R^2$ through its real and imaginary parts.
At each zero,
$$
\det D_{(t,\lambda)}H_r(t_j,\lambda_j)
=
\det
\begin{pmatrix}
u''(t_j)&0\\
v'(t_j)+\lambda_jS_r'(t_j)&S_r(t_j)
\end{pmatrix}
=u''(t_j)S_r(t_j)>0.
$$
Thus every zero is nondegenerate and has local degree $+1$.

Give $R:=[0,2\pi]\times[0,1]$ its usual orientation, with $t$ pointing right
and $\lambda$ pointing up. Its outer boundary is therefore
traversed counterclockwise. Remove disjoint small disks around
the $n$ zeros $\alpha_j:=(t_j,\lambda_j)$, contained in the interior of $R$. At each zero $\alpha_j$, we have $\det DH_r(\alpha_j)>0$.
By the inverse function theorem, $H_r$ is an
orientation-preserving diffeomorphism on a sufficiently small
neighborhood of $\alpha_j$. Choose the removed disk $D_j$ with
its closure contained in this neighborhood.
Then $H_r$ maps $\partial D_j$ to a simple closed curve
enclosing $H_r(\alpha_j)=0$. When $\partial D_j$ is traversed
clockwise, its image is also traversed clockwise, exactly once,
and hence has winding number $-1$.

The outer boundary of the rectangle is traversed counterclockwise.
Along its lower edge, $\lambda=0$ and $t$ increases from
$0$ to $2\pi$. Since
$$
H_r(t,0)=h(t),
$$
the image of this edge traces the loop $h$ in its defining
direction, with change of argument
$$
2\pi\,\operatorname{wind}_0(h).
$$
Along the upper edge, $\lambda=1$ and $t$ decreases from
$2\pi$ to $0$. Since
$$
H_r(t,1)=h(t)+iS_r(t),
$$
the image traces the loop $h+iS_r$ in the reverse direction,
with change of argument
$$
-2\pi\,\operatorname{wind}_0(h+iS_r).
$$
Finally, periodicity gives
$$
H_r(0,\lambda)=H_r(2\pi,\lambda),
\qquad 0\le\lambda\le1.
$$
On the right edge, $\lambda$ increases from $0$ to $1$,
whereas on the left edge it decreases from $1$ to $0$.
Thus their images trace the same path in opposite directions,
and their changes of argument cancel.

Since $d\arg H_r$ is a closed $1$-form on $R$ minus the zeros of $H_r$, by Stokes' theorem, the total change of argument along the
oriented boundary of the region with the $n$ disks removed is zero.
Adding the contributions above and dividing by $2\pi$, we obtain
$$
\operatorname{wind}_0(h)
-\operatorname{wind}_0(h+iS_r)-n=0.
$$
Since $\operatorname{wind}_0(h)=0$, it follows that $\operatorname{wind}_0(h+iS_r)=-n$. 

Applying the previously established argument-principle identity
for the holomorphic function $F_r$, we now obtain
$$
N_{\mathbb D}(F_r')-n=-n,
$$
and hence $N_{\mathbb D}(F_r')=0$.
Together with the nonvanishing of $F_r'$ on $\mathbb T$,
this proves
$$
F_r'(z)\ne0
\qquad(z\in\overline{\mathbb D})
$$
for every $r$ sufficiently close to $1$.

\end{proof}

\begin{remark}[The condition \eqref{eq:min} in terms of $g$]\label{rem:g}
By \eqref{eq:h}, $u'(t)=-\Ima g(e^{it})$, $v(t)=\Rea g(e^{it})$ and $u''(t)=-\frac{d}{dt}\Ima g(e^{it})$. Hence \eqref{eq:min} at $t_j$ says exactly that
$$
g(e^{it_j})\in(-\infty,0)\qquad\text{and}\qquad \frac{d}{dt}\Ima g(e^{it})\Big|_{t=t_j}<0 ,
$$
i.e.\ the curve $t\mapsto g(e^{it})$ crosses the negative real axis at $t_j$ transversally from the upper to the lower half-plane (a \emph{downward crossing}). Lemma~\ref{lem:wdrop} thus reads: \emph{if $g(\T)$ has $n$ downward crossings of $(-\infty,0)$, then $W$ carries a regular admissible pair of degree $n$.} In particular a starlike $W$ ($\Rea g>0$) carries none. Theorem~\ref{thm:char} below gives the converse.
\end{remark}

\begin{remark}\label{rem:F}
The proof of Lemma~\ref{lem:wdrop} uses only that $F=W$ is holomorphic on a neighbourhood of $\cl\D$, zero-free on $\T$, with $F'\neq0$ on $\cl\D$, together with the value of $\wind_0(h)$, $h(t)=ie^{it}F'(e^{it})/F(e^{it})$. It therefore applies verbatim to $F=WB$ for a regular admissible pair; see Proposition~\ref{prop:increase}.
\end{remark}

\begin{lemma}[Lobed domain]\label{lem:domain}
For every integer $n\ge1$ there exists $W\in\cS$, univalent
on a neighbourhood of $\cl\D$, whose functions $u$ and $v$
satisfy \eqref{eq:min} at (at least) $n$ distinct parameters.
\end{lemma}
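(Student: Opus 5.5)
By Remark~\ref{rem:g}, it suffices to produce $W\in\cS$, univalent on a neighbourhood of $\cl\D$, such that the closed curve $t\mapsto g(e^{it})$, $g=zW'/W$, crosses the negative real axis transversally from the upper to the lower half-plane at least $n$ times. Geometrically, $v=\frac{d}{dt}\arg W(e^{it})<0$ means the boundary curve $W(\T)$ is locally turning clockwise about $0$. The condition $u'=0$, $u''>0$ means $|W(e^{it})|$ has a strict local minimum there. So I want an analytic Jordan domain $\Omega\ni 0$ whose boundary has $n$ ``inward hooks'': arcs where the boundary runs backwards in angle and, in the middle, comes closest to $0$ with nondegenerate minimum.

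\emph{Construction.} Start from a non-starlike model domain with $n$ hooks. A concrete choice is the domain bounded by a smooth Jordan curve given in polar-like form. Take a star-shaped disk and attach $n$ spiral ``lobes'': near angles $\theta_j=2\pi j/n$ the boundary goes out radially, then turns back clockwise along a circular arc at a smaller radius $\rho_0$, then out again. On the backward arc, modify the radius to a small parabolic dip, $|w|=\rho_0+\epsilon(\arg w-\theta_j)^2$. At the dip bottom the angle decreases along the positively oriented boundary, so $v<0$, and $|w|$ attains a strict local minimum, so $u'=0,u''>0$. These are conditions on the curve with respect to an arbitrary regular parametrization: the sign of $v$ and the nondegeneracy of the minimum of $|w|$ are invariant under orientation-preserving reparametrization $t\mapsto\sigma(t)$ with $\sigma'>0$. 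This holds because $u'(t)=\frac{d}{dt}\log|w(\sigma(t))|$ vanishes exactly where $\frac{d}{d\sigma}$ does, and there $u''=\sigma'^2\,\partial_\sigma^2\log|w|$. So I only need to draw an analytic Jordan curve with these features, which I will arrange by making it a trigonometric-polynomial curve, or by smoothing a piecewise-analytic one and approximating.

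\emph{Passage to $W$.} Let $\Omega$ be the interior. By (T2), its Riemann map $\phi$ with $\phi(0)=0$, $\phi'(0)>0$ extends conformally across $\T$ and maps $\T$ onto $\partial\Omega$ with positive orientation and $\phi'\ne0$ on $\T$. Set $W=\phi/\phi'(0)$; scaling does not affect $v$ or $u'$, $u''$. By the invariance above, the $n$ dip bottoms give $n$ distinct parameters satisfying \eqref{eq:min}.

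\emph{Main obstacle.} The hard part is producing an honestly \emph{analytic} Jordan curve with the features, rather than a piecewise one. I would try the following. Construct a $C^\infty$ (even piecewise-circular, then smoothed) Jordan curve $\gamma$ with the hooks. Approximate its parametrization in $C^2$ by trigonometric polynomials, for instance Fej\'er means of a smooth parametrization, which converge in every $C^k$. Then check three things. First, the approximant is still a regular Jordan curve, since $C^1$-closeness to an embedded regular curve preserves embeddedness. Second, the approximant is real-analytic. Third, the conditions \eqref{eq:min} are stable: $v<0$ and $u''>0$ are open conditions, and a nondegenerate critical point of $u$ persists under $C^2$-perturbation by the implicit function theorem. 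So $n$ distinct parameters survive.

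Alternatively, I could take a lemniscate $\{|q|=1\}$ from (T5) inside a thin collar of $\gamma$. However, controlling its second-order geometry is harder, so I prefer the trigonometric approximation.
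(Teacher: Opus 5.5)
Your proof is correct, but it handles the one real difficulty, the corners of the model domain, differently from the paper.

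The paper also starts from a piecewise-analytic lobed domain: a central disk, peripheral unit disks centred at $Le^{i\theta_j}$, and thin corridors. It takes the distinguished points $p_j=(L-1)e^{i\theta_j}$ to be the nearest points of circles centred away from $0$. There the nondegenerate minimum of $|w|$ and the sign $\frac{d}{ds}\arg<0$ come for free, so no parabolic dip is needed. The paper never regularizes the curve. The Riemann map $f$ is only shown to extend across the distinguished circular arcs, by Schwarz reflection. The corners are then removed by passing to the dilation $W(z)=f(\rho z)/(\rho f'(0))$. The parameters are followed to $t_j(\rho)$ by the implicit function theorem applied to $G(\rho,t)=\partial_t\log|f(\rho e^{it})|$.

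You regularize the curve rather than the map. Smoothing plus $C^2$ trigonometric (Fej\'er) approximation gives an honestly analytic Jordan curve. Then (T2) and your reparametrization-invariance observation finish the job.

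What each route buys:
\begin{itemize}
\item The paper's route is more economical. The analytic curve $f(\rho\T)$ comes with its Riemann map automatically, so neither (T2) nor the $C^1$-stability of embeddings is needed, only regularity near the $p_j$.
\item Your route yields a domain $W(\D)$ that is an analytic perturbation of the picture you drew. All checks reduce to elementary plane geometry of the curve, at the cost of the (standard) embedding-stability and $C^2$-perturbation arguments.
\end{itemize}

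When you write it up, make the model hook explicit. One way is the union of $D(0,1)$, a radial corridor, and an annular sector $\{\rho_0<|w|<R\}$ over a short angular interval with $1<\rho_0<R$, whose inner arc carries the dip. Your verbal description can be realized this way, but as written it is loose.
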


\begin{proof}
We first construct a piecewise-analytic Jordan domain $\Omega$
containing the origin, with $n$ distinguished points on circular
boundary arcs where the modulus has a nondegenerate local
minimum and the argument is decreasing in the positive boundary
direction.

Choose $n$ equally spaced directions $\theta_j=2\pi j/n$.
Take the central disk $D(0,1)$ and the peripheral disks
$D(Le^{i\theta_j},1)$, where $L$ is sufficiently large.
Join the central disk to each peripheral disk by a thin straight
corridor around the segment from $0$ to
$e^{i\theta_j}(L+i/2)$. Both ends of each corridor lie inside
the corresponding disks. For sufficiently narrow corridors,
the different attachments occupy disjoint angular sectors
outside the central disk. Their union is therefore a Jordan
domain with finitely many corners.

The points
$$
p_j=(L-1)e^{i\theta_j}
$$
remain on exposed circular boundary arcs: their distance from
the corresponding corridor's centre line tends to $1/2$ as
$L\to\infty$, so sufficiently narrow corridors avoid them.
Figure~\ref{fig:lobe} illustrates this construction for $n=2$.

%%%%%%%%%%

\begin{figure}[ht]
\centering
\begin{tikzpicture}[
  scale=0.65,
  >=stealth,
  every path/.style={},
  every node/.style={font=\small}
]

% Fill the connecting strip.
\fill[purple!20]
  (-8,-0.6) -- (8,0.4) --
  (8,0.6) -- (-8,-0.4) -- cycle;

% Fill the three disks.
\fill[orange!30] (-8,0) circle (1);
\fill[orange!30] (8,0) circle (1);
\fill[blue!15] (0,0) circle (1);

% Draw circular boundaries outside the strip.
% Use PGF clipping directly, without a TikZ \clip command.
\begin{scope}
  \pgfseteorule

  % Outer rectangle.
  \pgfpathrectangle
    {\pgfpoint{-10cm}{-2cm}}
    {\pgfpoint{20cm}{4cm}}

  % Strip to exclude.
  \pgfpathmoveto{\pgfpoint{-8cm}{-0.6cm}}
  \pgfpathlineto{\pgfpoint{8cm}{0.4cm}}
  \pgfpathlineto{\pgfpoint{8cm}{0.6cm}}
  \pgfpathlineto{\pgfpoint{-8cm}{-0.4cm}}
  \pgfpathclose

  \pgfusepath{clip}

  \draw[thick, orange!70!black]
    (-8,0) circle (1);
  \draw[thick, orange!70!black]
    (8,0) circle (1);
  \draw[thick, blue!60!black]
    (0,0) circle (1);
\end{scope}

% Draw strip boundaries outside the disks.
\begin{scope}
  \pgfseteorule

  % Outer rectangle.
  \pgfpathrectangle
    {\pgfpoint{-10cm}{-2cm}}
    {\pgfpoint{20cm}{4cm}}

  % Three disks to exclude.
  \pgfpathcircle{\pgfpoint{-8cm}{0cm}}{1cm}
  \pgfpathcircle{\pgfpoint{0cm}{0cm}}{1cm}
  \pgfpathcircle{\pgfpoint{8cm}{0cm}}{1cm}

  \pgfusepath{clip}

  \draw[thick, purple!65!black]
    (-8,-0.6) -- (8,0.4) --
    (8,0.6) -- (-8,-0.4) -- cycle;
\end{scope}

% Origin and central disk.
\fill[black] (0,0) circle (0.045);
\node[below] at (0,-0.05) {$0$};
\node at (0,-1.4) {unit disk};

% Distinguished points.
\fill[red!75!black] (-7,0) circle (0.09);
\fill[red!75!black] (7,0) circle (0.09);

\node[red!75!black, above right]
  at (-7,-0.2) {$p_1$};
\node[red!75!black, above left]
  at (7,-0.5) {$p_2$};

% Positive boundary orientation.
\draw[->, thick]
  (-8,0) ++(8:1)
  arc[start angle=8, end angle=38, radius=1];

\draw[->, thick]
  (8,0) ++(188:1)
  arc[start angle=188, end angle=218, radius=1];

\end{tikzpicture}
\caption{A model lobed domain $\Omega$ for $n=2$.
All three disks have radius $1$.}
\label{fig:lobe}
\end{figure}

Near $p_j$, the positively oriented circular arc is parametrized
by $\gamma_j(s)=e^{i\theta_j}(L-e^{is})$. At $s=0$,
$$
\frac{d}{ds}\log|\gamma_j(s)|=0,\qquad
\frac{d^2}{ds^2}\log|\gamma_j(s)|
=\frac{L}{(L-1)^2}>0,\qquad
\frac{d}{ds}\arg\gamma_j(s)=-\frac1{L-1}<0.
$$
Thus these arcs have the required properties.

Let $f:\D\to\Omega$ be the Riemann map normalized by
$f(0)=0$ and $f'(0)>0$, and write $f(e^{it_j})=p_j$.
By boundary correspondence and Schwarz reflection, $f$
extends conformally across each distinguished circular arc.
Its boundary parametrization preserves orientation  and has
nonzero speed, so the preceding strict inequalities give
$$
u_f'(t_j)=0,\qquad u_f''(t_j)>0,\qquad v_f(t_j)<0,
$$
where $u_f(t)=\log|f(e^{it})|$ and
$v_f(t)=\operatorname{Im}(ie^{it}f'(e^{it})/f(e^{it}))$.

Since $\Omega$ has corners, $f$ need not extend univalently across the whole of $\T$ and we therefore pass to a dilation. Near each $(1,t_j)$, define
$$
G(\rho,t)
=\frac{\partial \log|f(\rho e^{it})|}{\partial t}=\operatorname{Re}
 \left(i\rho e^{it}
       \frac{f'(\rho e^{it})}{f(\rho e^{it})}\right).
$$
This is real analytic there, and
$$
G(1,t_j)=0,
\qquad
\frac{\partial G}{\partial t}(1,t_j)
=u_f''(t_j)>0.
$$
The implicit function theorem therefore gives a nearby
parameter $t_j(\rho)$ satisfying
$$
G(\rho,t_j(\rho))=0
$$
for $\rho<1$ sufficiently close to $1$. By continuity, the positive second derivative and the negative
angular derivative persist at these points. Since there are only finitely many $t_j$,
we may choose one $\rho\in(0,1)$ for which all these
properties hold and the parameters $t_j(\rho)$ remain
distinct.

Set
$$
W(z)=\frac{f(\rho z)}{\rho f'(0)}.
$$
Then $W(0)=0$, $W'(0)=1$, and $W$ is univalent on
$D(0,1/\rho)$, a neighbourhood of $\cl\D$.
Thus $W\in\cS$.
Multiplication by the positive constant
$1/(\rho f'(0))$ does not change the relevant angular
derivatives, so $W$ satisfies \eqref{eq:min} at all
$t_j(\rho)$.
\end{proof}

\begin{theorem}\label{thm:nonempty}
For every integer $n\ge0$ there is a regular admissible pair of degree exactly $n$.
\end{theorem}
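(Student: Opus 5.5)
The plan is to treat $n=0$ separately and to obtain every $n\ge1$ by feeding Lemma~\ref{lem:domain} into Lemma~\ref{lem:wdrop}.

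For $n=0$ I take $W(z)=z$ and $B\equiv1$. Then $W$ is univalent on all of $\C$, so in particular on a neighbourhood of $\cl\D$. Also $F=WB=z$ has $F'\equiv1\ne0$ on $\cl\D$ and $B(0)=1>0$. So $(z,1)$ is a regular admissible pair of degree $0$.

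Now fix $n\ge1$. Lemma~\ref{lem:domain} gives $W\in\cS$, univalent on a neighbourhood of $\cl\D$, whose functions $u,v$ satisfy \eqref{eq:min} at $n$ distinct parameters $t_1,\dots,t_n$.

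Before invoking Lemma~\ref{lem:wdrop} I check its standing hypotheses. It needs $W(0)=0$ and $W'\ne0$ on $\cl\D$. The first holds because $W\in\cS$. For the second, $W$ is univalent on an open neighbourhood $U\supset\cl\D$, and a univalent holomorphic function has nonvanishing derivative on $U$. It follows that $h$ is well defined and zero-free, and that $\wind_0(h)=0$, exactly as computed before Lemma~\ref{lem:wdrop}.

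Applying Lemma~\ref{lem:wdrop} with these $t_j$, I get, for $r<1$ close to $1$, that $F_r=WB_r$ satisfies $F_r'\ne0$ on $\cl\D$. Here $B_r$ has the $n$ distinct zeros $re^{it_j}$, so $\deg B_r=n$ exactly, and $B_r(0)=r^n>0$. Since $W'(0)=1$, the normalization $W/W'(0)$ in that lemma is just $W$.

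It remains to confirm that $(W,B_r)$ is regular admissible in the sense of Definition~\ref{def:admissible}. $W\in\cS$ extends univalently to a neighbourhood of $\cl\D$. $B_r$ is a finite Blaschke product with $B_r(0)>0$. Finally $(WB_r)'\ne0$ on $\cl\D$, which covers both condition 3 and the regularity requirement.

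I see no real obstacle, since the substantive work is already in Lemmas~\ref{lem:wdrop} and~\ref{lem:domain}. The only point needing care is confirming that the lemma's setting applies, namely $W'\ne0$ on $\cl\D$ and exact degree $n$ through distinctness of the zeros, and both follow as noted above.
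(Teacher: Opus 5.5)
Your proposal is correct and matches the paper's proof: $(z,1)$ handles $n=0$, and for $n\ge1$ you feed Lemma~\ref{lem:domain} into Lemma~\ref{lem:wdrop}. Your extra checks are the routine verifications the paper leaves implicit: $W'\ne0$ on $\cl\D$ follows from univalence on a neighbourhood, and the degree is exactly $n$ because the zeros $re^{it_j}$ are distinct.
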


\begin{proof}
The pair $(z,1)$ covers $n=0$. For $n\ge1$, this is an immediate consequence of Lemmas \ref{lem:wdrop} and \ref{lem:domain}.
\end{proof}

\begin{remark}[Symmetric pairs and the $n$-th root transform]\label{rem:sym}
Lemma~\ref{lem:domain} needs only to be proved for $n=1$. Indeed, for $f\in\cS$ let $f_n(z):=f(z^n)^{1/n}$ (the branch with $f_n'(0)=1$); then $f_n\in\cS$, $f_n$ is univalent on a neighbourhood of $\cl\D$ if $f$ is, and $g_{f_n}(z)=g_f(z^n)$. Hence a downward crossing of $g_f(\T)$ at $e^{it_0}$ produces $n$ downward crossings of $g_{f_n}(\T)$ at the $n$-th roots of $e^{it_0}$, and Lemma~\ref{lem:wdrop} yields a regular admissible pair $(f_n,B)$ of degree $n$ with $B(z)=\beta(z^n)$, $\beta$ a single Blaschke factor. More generally, for a finite Blaschke product $\beta$ with $\beta(0)>0$ and $B(z):=\beta(z^n)$ one has $zB'(z)/B(z)=n\,(w\beta'(w)/\beta(w))|_{w=z^n}$, so
$$
(f_n,\beta(z^n))\ \text{is admissible}\iff \beta\ \text{has simple zeros and}\ g_f+n\,\frac{w\beta'}{\beta}\ \text{has no zero in }\D ,
$$
and $B(0)=\beta(0)$. Thus $\Bee_{nk}\ge\Bee^{(n)}_k:=\sup\{1/\beta(0)\}$ over such $(f,\beta)$ with $\deg\beta\le k$, and $\Bee^{(1)}_1=\Bee_1$. The method of Theorem~\ref{thm:sqrt75} applies to $\Bee^{(n)}_1$ with $\beta_s$ replaced by $n\beta_s$. Under Proposition~\ref{prop:poly-to-pair}, polynomials of the form $P(w)=w\,p(w^n)$ (such as Smale's example $w-w^d/d$) correspond exactly to such symmetric pairs.
\end{remark}

\begin{theorem}\label{thm:char}
Let $W$ be univalent on a neighbourhood of $\cl\D$ with $W(0)=0$, and let $g=zW'/W$.
\begin{enumerate}
\item[(a)] If $(W,B)$ is admissible and $B$ is nonconstant with zeros $a_1,\dots,a_d$, then there is $\zeta\in\T$ with
$$
g(\zeta)\in(-\infty,0)\qquad\text{and}\qquad -g(\zeta)\ \ge\ \sum_{j=1}^d\frac{1-|a_j|}{1+|a_j|}.
$$
In particular $g(\T)\cap(-\infty,0)\neq\emptyset$, and with
$\nu(W):=\max\{-g(\zeta):\ \zeta\in\T,\ g(\zeta)\in(-\infty,0)\}$ one has
$$
\log\frac1{B(0)}\ \le\ 5\,\nu(W).
$$
\item[(b)] Conversely, if $t\mapsto g(e^{it})$ has $n\ge1$ downward crossings of $(-\infty,0)$ (Remark~\ref{rem:g}), then $W/W'(0)$ carries a regular admissible pair of degree $n$. In particular, if $g(\T)\cap(-\infty,0)\ne\emptyset$ and every point of $g(\T)\cap(-\infty,0)$ is a transversal crossing, then $W/W'(0)$ carries a nonconstant Blaschke factor.
\end{enumerate}
\end{theorem}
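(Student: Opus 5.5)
Part (b) is a restatement of Lemma~\ref{lem:wdrop} in the language of Remark~\ref{rem:g}. By \eqref{eq:h} and Remark~\ref{rem:g}, a downward crossing of $(-\infty,0)$ by $t\mapsto g(e^{it})$ at $t_j$ is exactly condition \eqref{eq:min} at $t_j$. Since $W$ is univalent on a neighbourhood of $\cl\D$, $W/W'(0)$ satisfies the standing hypotheses of Lemma~\ref{lem:wdrop}, which then produces a regular admissible pair of degree $n$. For the final sentence of (b), observe that $g(\T)\cap(-\infty,0)$ is finite, because $\Ima g(e^{it})$ is real analytic and its zeros there are simple by transversality. The crossings at these points alternate between upward and downward. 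The total signed count is $\wind_0(g\circ e^{it})=0$, since $g$ is holomorphic and zero-free on a neighbourhood of $\cl\D$. A nonempty set of transversal crossings therefore contains at least one downward crossing, and the first part applies.

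For (a) I will use the argument principle for $\Phi=g+zB'/B$ on $\T$. On $\T$ the term $\beta(z):=zB'(z)/B(z)$ equals $\sum_j(1-|a_j|^2)/|z-a_j|^2$. This is real and strictly positive, and it is at least $\sum_j(1-|a_j|)/(1+|a_j|)=:\Sigma$, because $|z-a_j|\le1+|a_j|$.

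I split into two cases. If $\Phi$ vanishes at some $\zeta\in\T$, then $g(\zeta)=-\beta(\zeta)\le-\Sigma<0$ and we are done. Otherwise $\Phi$ is continuous and zero-free on $\T$. It has no zeros in $\D$ and exactly $d$ simple poles there, namely the zeros of $B$. Hence $\wind_0(\Phi|_\T)=-d\ne0$. A loop that avoids the closed ray $(-\infty,0]$ has winding number $0$, so some $\zeta\in\T$ has $\Phi(\zeta)\in(-\infty,0)$. Then $g(\zeta)=\Phi(\zeta)-\beta(\zeta)$ is real and $\le-\beta(\zeta)\le-\Sigma$. The set $\{\zeta\in\T: g(\zeta)\le-\Sigma\}$ is compact and nonempty, so the maximum defining $\nu(W)$ is attained and satisfies $\nu(W)\ge\Sigma$.

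It remains to compare $\log(1/B(0))=\sum_j\log(1/|a_j|)$ with $\Sigma$. Writing $x_j=(1-|a_j|)/(1+|a_j|)$, we have $\log(1/|a_j|)=2\tanh^{-1}x_j$. The ratio $2\tanh^{-1}(x)/x$ is increasing in $x$, so it suffices to bound $|a_j|$ from below by a fixed constant. I would use Theorem~\ref{thm:upper}, which gives $B(0)\ge(1+|a|)^2/(16|a|)$ for each zero $a$. Combined with $B(0)=\prod_k|a_k|\le|a|$, this yields $(1+|a|)^2\le16|a|^2$, that is $|a|\ge1/3$. Then $x_j\le1/2$, and $2\tanh^{-1}(x_j)\le 4\tanh^{-1}(1/2)\,x_j=2\log 3\,x_j<5x_j$. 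Summing over $j$ gives $\log(1/B(0))\le5\Sigma\le5\nu(W)$.

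The only delicate step is the case split in the winding argument, where zeros of $\Phi$ on $\T$ (that is, zeros of $F'$ on the circle) are allowed because admissibility only forbids zeros in $\D$. That case is handled directly as above, so I expect no real obstacle.
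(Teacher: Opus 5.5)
Your proposal is correct and follows the paper's proof. For (a) the paper likewise plays $\wind_0(\Phi|_\T)=-d$ against the fact that $\Phi(\T)$ would omit $(-\infty,0]$ if $g(\T)$ missed $(-\infty,-\sigma]$ (your case split is that contrapositive unpacked), and for (b) it likewise invokes Lemma~\ref{lem:wdrop} and the signed crossing count. The only difference is the final estimate: you use the Pick bound of Theorem~\ref{thm:upper} to get $|a_j|\ge 1/3$ and the identity $\log(1/r)=2\tanh^{-1}\frac{1-r}{1+r}$, giving the sharper constant $2\log 3$, whereas the paper uses $|a_j|\ge B(0)\ge\frac14$ and $\log\frac1r\le\frac{1-r}{r}\le 5\,\frac{1-r}{1+r}$.

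One side remark in (b) is false: crossings of $(-\infty,0)$ need not alternate in direction. A boundary with a spiralling arm makes $g(\T)$ wind several times one way and then back. You never actually use this, because the number of downward minus upward crossings equals $\wind_0(g|_\T)=0$, and together with nonemptiness that already forces a downward crossing.
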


\begin{proof}
(a) Let $F=WB$ and $\Phi=zF'/F=g+zB'/B$. Since $W$ is univalent on a neighbourhood of $\cl\D$, $g$ is holomorphic and zero-free there, and $\Phi$ is meromorphic on a neighbourhood of $\cl\D$ with no poles on $\T$. On $\T$ the function
$$
S(\zeta):=\frac{\zeta B'(\zeta)}{B(\zeta)}=\sum_{j=1}^d\frac{1-|a_j|^2}{|\zeta-a_j|^2}\ \ge\ \sum_{j=1}^d\frac{1-|a_j|}{1+|a_j|}=:\sigma>0
$$
is real. As noted in Section~\ref{sec:WB}, $\Phi$ has no zeros in $\D$ and exactly $d$ simple poles there. Suppose that $g(\T)\cap(-\infty,-\sigma]=\emptyset$. Then $\Phi(\T)\cap(-\infty,0]=\emptyset$: if $\Phi(\zeta)=g(\zeta)+S(\zeta)\le0$ then $g(\zeta)=\Phi(\zeta)-S(\zeta)$ would be real and $\le-\sigma$. In particular $\Phi\ne0$ on $\T$, and $\Phi(\T)$ omits a ray from $0$, so $\wind_0(\Phi|_\T)=0$. But by the argument principle $\wind_0(\Phi|_\T)=0-d<0$, a contradiction. Hence some $\zeta\in\T$ has $g(\zeta)\in(-\infty,-\sigma]$, which is the first claim. For the second, Theorem~\ref{thm:upper} gives $|a_j|\ge B(0)\ge1/4$, and for $1/4\le r<1$ one has $\log\frac1r\le\frac{1-r}{r}=\frac{1+r}{r}\cdot\frac{1-r}{1+r}\le5\,\frac{1-r}{1+r}$; summing over $j$ gives $\log\frac1{B(0)}\le5\sigma\le5\nu(W)$.

(b) The first statement is Lemma~\ref{lem:wdrop} together with Remark~\ref{rem:g}. For the second, note that $\wind_0(g|_\T)=0$ because $g$ is holomorphic and zero-free on a neighbourhood of $\cl\D$. If all intersections of $g(\T)$ with $(-\infty,0)$ are transversal crossings (finitely many, by analyticity), the winding number equals the number of downward crossings minus the number of upward crossings; hence both numbers are equal, and both are $\ge1$ since the intersection is nonempty.
\end{proof}

\begin{proposition}\label{prop:increase}
Let $(W,B)$ be a regular admissible pair of degree $d\ge0$, $F=WB$, and set $u(t)=\log|F(e^{it})|$, $v(t)=\frac{d}{dt}\arg F(e^{it})$. Suppose \eqref{eq:min} holds at $n\ge1$ distinct parameters $t_1,\dots,t_n$, and let $B_r$ be as in Lemma~\ref{lem:wdrop}. Then for all $r<1$ sufficiently close to $1$, $(W,BB_r)$ is a regular admissible pair of degree $d+n$ with
$$
\frac1{B(0)B_r(0)}=\frac{r^{-n}}{B(0)}>\frac1{B(0)} .
$$
In particular, if $\Bee_d$ is attained by a regular pair satisfying this hypothesis for some $n\ge1$, then $\Bee_{d+1}>\Bee_d$.
\end{proposition}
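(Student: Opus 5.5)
The plan is to run the proof of Lemma~\ref{lem:wdrop} verbatim with $W$ replaced by $F=WB$, as anticipated in Remark~\ref{rem:F}. The only changes are in the input data: the winding number of $h(t)=ie^{it}F'(e^{it})/F(e^{it})$, and the admissibility bookkeeping for the product $BB_r$.

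First I record the needed properties of $F$. Since $(W,B)$ is regular, $F$ is holomorphic on a neighbourhood of $\cl\D$, zero-free on $\T$ (because $|B|=1$ there and $W\ne0$ on $\T$), and $F'\ne0$ on $\cl\D$. The function $F$ has exactly $d+1$ zeros in $\D$: $W$ contributes $0$, and $B$ contributes $d$ simple zeros. $F'$ has none. The argument principle then gives
$$\wind_0(h)=1+N_\D(F')-(d+1)=-d.$$
Also $u=\log|F(e^{it})|$ is real analytic and, because of $u''(t_j)>0$, not constant, so its stationary set $E$ is finite. Finally $h=u'+iv$ is zero-free on $\T$.

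Next I set $F_r=FB_r=W\cdot(BB_r)$. On $\T$,
$$ie^{it}F_r'/F_r=h+iS_r .$$
The argument of Lemma~\ref{lem:wdrop} applies without change and gives two conclusions for $r$ close to $1$:
\begin{itemize}
\item[(i)] $h+iS_r\ne0$ on $[0,2\pi]$. At points of $E\setminus\{t_j\}$ we have $S_r\to0<|v|$, and at each $t_j$ we have $v(t_j)+S_r(t_j)>0$.
\item[(ii)] The homotopy $h+i\lambda S_r$ has exactly $n$ nondegenerate zeros $(t_j,\lambda_j)$, each with Jacobian $u''(t_j)S_r(t_j)>0$. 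Hence $\wind_0(h+iS_r)=\wind_0(h)-n=-d-n$.
\end{itemize}
$F_r$ has $d+n+1$ zeros in $\D$, provided these zeros are simple and distinct (see below). The argument principle then gives
$$N_\D(F_r')-(d+n)=-d-n,$$
so $F_r'\ne0$ on $\cl\D$.

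Then the admissibility bookkeeping. $BB_r$ is a finite Blaschke product of degree $d+n$, and $(BB_r)(0)=B(0)r^n>0$. The pair is regular because $W$ already extends univalently and $F_r'\ne0$ on $\cl\D$. The zeros $re^{it_j}$ of $B_r$ tend to $\T$ while the zeros of $B$ are fixed in $\D$, so for $r$ near $1$ they are disjoint from the zeros of $B$ and the zero count above is correct. Even without this, local univalence forces simple zeros; note $F_r$ has no zeros on $\T$. The value is
$$\frac{1}{(BB_r)(0)}=\frac{r^{-n}}{B(0)}>\frac1{B(0)}.$$
The final claim follows by applying this to an extremal regular pair for $\Bee_d$: the resulting pair has degree $d+n$, and its value exceeds $\Bee_d$.

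A subtlety arises for the degree. With $n>1$ this gives $\Bee_{d+n}>\Bee_d$, not directly $\Bee_{d+1}>\Bee_d$. I would handle this by using only one of the parameters, $t_1$ (the hypothesis allows any subset of the $t_j$). This gives degree $d+1$ and $\Bee_{d+1}\ge r^{-1}\Bee_d>\Bee_d$.

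The main obstacle I expect is the winding-number bookkeeping, namely correctly getting $\wind_0(h)=-d$ rather than $0$. Once that is fixed, the homotopy argument of Lemma~\ref{lem:wdrop} is insensitive to the starting value.
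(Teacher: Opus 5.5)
Your proposal is correct and follows the paper's proof essentially step for step. The argument principle gives $\wind_0(h)=-d$; the homotopy argument of Lemma~\ref{lem:wdrop}, applied to $F$ as in Remark~\ref{rem:F}, then gives $\wind_0(h+iS_r)=-d-n$ and hence $N_\D\bigl((FB_r)'\bigr)=0$; and the degree-$(d+1)$ claim comes from using only the single parameter $t_1$, exactly as the paper does.
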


\begin{proof}
With $h(t)=ie^{it}F'(e^{it})/F(e^{it})=u'(t)+iv(t)$, the argument principle gives $\wind_0(h)=1+N_\D(F')-N_\D(F)=1+0-(d+1)=-d$. For $r$ close to $1$ the points $re^{it_j}$ are distinct from the zeros of $B$, so $G_r:=FB_r=W(BB_r)$ has $d+n+1$ simple zeros in $\D$ and $ie^{it}G_r'(e^{it})/G_r(e^{it})=h(t)+iS_r(t)$. The proof of Lemma~\ref{lem:wdrop} (Remark~\ref{rem:F}) shows that for $r$ close to $1$, $h+iS_r$ is zero-free and $\wind_0(h+iS_r)=\wind_0(h)-n=-d-n$. Hence $G_r'\ne0$ on $\T$ and $1+N_\D(G_r')-(d+n+1)=-d-n$, i.e.\ $N_\D(G_r')=0$. Thus $(W,BB_r)$ is regular admissible of degree $d+n$, and $BB_r(0)=B(0)r^n$. Apply the construction with a single parameter $t_1$ ($n=1$) to get a regular admissible pair of degree $d+1$ with
$$
\frac{1}{B(0)B_r(0)}=r^{-1}\Bee_d>\Bee_d;
$$
hence $\Bee_{d+1}>\Bee_d$.
\end{proof}

\begin{theorem}\label{thm:attain}
For every $n\ge0$ there is an admissible pair of degree at most $n$ with $1/B(0)=\Bee_n$. For $n=1$ the extremal pair has degree exactly one.
\end{theorem}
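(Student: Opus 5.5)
A normal-families argument, combined with a way of stopping the Blaschke zeros from escaping to $\T$ and the $W$'s from degenerating. For $n=0$, $\Bee_0=1$ is attained by $(z,1)$, so fix $n\ge1$. Take admissible pairs $(W_k,B_k)$ with $\deg B_k\le n$ and $1/B_k(0)\to\Bee_n$. Passing to a subsequence, we may assume $\deg B_k=d\le n$ is constant. Let the zeros of $B_k$ be $a_1^{(k)},\dots,a_d^{(k)}$. By compactness of $\cS$, $W_k\to W\in\cS$ locally uniformly. By compactness of $\cl\D^d$, we may assume $a_j^{(k)}\to a_j\in\cl\D$.

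Since $B_k(0)=\prod_j|a_j^{(k)}|\to 1/\Bee_n>0$, no $a_j$ is $0$. Let $I=\{j:|a_j|<1\}$ and let $B=\prod_{j\in I}b_{a_j}$, normalized so that $B(0)>0$. On compacta of $\D$, factors whose zeros tend to $\T$ converge locally uniformly to unimodular constants. Hence, after rotation normalization, $B_k\to cB$ locally uniformly with $|c|=1$. This gives $W_kB_k\to cWB$, so $(W_kB_k)'\to c(WB)'$ locally uniformly. The limit $WB$ is not constant, since its derivative at $0$ is $B(0)\ne0$. By Hurwitz's theorem, $(WB)'$ is either zero-free in $\D$ or identically zero, so it is zero-free. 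Thus $(W,B)$ is admissible with $\deg B=|I|\le n$.

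For the value, $B(0)=\prod_{j\in I}|a_j|=\prod_{j=1}^d|a_j|$, because the indices outside $I$ have $|a_j|=1$. This equals $\lim_k B_k(0)=1/\Bee_n$. So $1/B(0)=\Bee_n$, and the supremum is attained with degree at most $n$. Collisions of zeros are harmless: the limit has simple zeros automatically, since admissibility forces all zeros of $B$ to be simple.

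For $n=1$, a degree-zero extremal pair would give $\Bee_1=1$. This contradicts Theorem~\ref{thm:lower}, which gives $\Bee_1>1.1$. Hence the extremal pair has degree exactly one.

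The main point needing care is the convergence $B_k\to cB$ when some zeros tend to $\T$. The limiting unimodular constants of those factors must be tracked so that the product converges. This is handled by choosing a subsequence along which each normalized factor's constant $\lambda_j^{(k)}$ converges. Uniform convergence of $(W_kB_k)'$ on compacta then follows from the Weierstrass theorem.
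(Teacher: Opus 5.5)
Your proof is correct and follows essentially the same route as the paper. The steps match: compactness of $\cS$ and of $\cl\D^d$, then discarding the zeros that escape to $\T$, whose normalized factors tend to $1$ (so your unimodular constant $c$ is in fact forced to equal $1$ by $B_k(0)\to B(0)>0$), then Hurwitz's theorem for $(W_kB_k)'$ using $(WB)'(0)=B(0)\neq0$, and finally Theorem~\ref{thm:lower} to rule out degree zero when $n=1$. The only point you leave implicit is that $1/\Bee_n>0$, i.e.\ $\Bee_n<\infty$; this is where the paper invokes Theorem~\ref{thm:upper} to keep $B_k(0)\in[1/4,1]$.
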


\begin{proof}
The case $n=0$ is $(z,1)$. Fix $n\ge1$. Choose admissible pairs $(W_j,B_j)$ of degree at most $n$ with $1/B_j(0)\to\Bee_n$. Theorem~\ref{thm:upper} keeps $B_j(0)$ in $[1/4,1]$. Pass to a subsequence so that $\deg B_j$ is constant, $W_j\to W$ locally uniformly with $W\in\cS$, and so that every zero of $B_j$ converges in $\cl\D$. Let $B$ be the Blaschke product of those limit points which lie in $\D$, normalised by $B(0)>0$, or $B\equiv1$ if none remain. A normalised factor whose zero tends to $\T$ converges to $1$ uniformly on compact subsets of $\D$, and $B_j(0)\to B(0)$. Thus $W_jB_j\to WB$ locally uniformly and $(WB)'(0)=B(0)=1/\Bee_n>0$. Each $(W_jB_j)'$ is zero-free in $\D$. By Hurwitz's theorem, $(WB)'$ is either zero-free or identically zero. The second alternative contradicts the value at the origin. The limit pair is admissible of degree at most $n$ and attains $\Bee_n$.

If $n=1$ and the single zero escaped to $\T$, the limit would have $B(0)=1$. Theorem~\ref{thm:lower} produces a degree-one value strictly larger than $1$, so a maximizing sequence cannot lose its zero. The extremal pair has degree one.
\end{proof}

\begin{corollary}\label{cor:strict}
$\Bee_1<\sqrt{7/5}$, and $\Bee_d<4-\frac{529}{576}\frac{(\log2)^2}{d^2}$ for every $d\ge1$.
\end{corollary}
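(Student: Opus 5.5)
The plan is to combine attainment (Theorem~\ref{thm:attain}) with the non-strict bounds already proved, and then rule out equality by looking at the extremal pair.

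For $\Bee_1$, Theorem~\ref{thm:attain} gives an admissible pair $(W,B)$ with $\deg B=1$ exactly and $1/B(0)=\Bee_1$. Theorem~\ref{thm:sqrt75} applies to every admissible degree-one pair and gives the strict inequality $|a|>\sqrt{5/7}$. Hence $\Bee_1=1/B(0)=1/|a|<\sqrt{7/5}$.

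For $\Bee_d$ with $d\ge1$, Theorem~\ref{thm:attain} gives an admissible pair $(W,B)$ with $\deg B\le d$ and $1/B(0)=\Bee_d$. There are three cases.
\begin{enumerate}
\item If $\deg B=0$, then $\Bee_d=1$. Since $\frac{529}{576}(\log2)^2<2$, we have $4-\frac{529}{576}\frac{(\log2)^2}{d^2}>2>1$.
\item If $1\le\deg B=d'\le d$, the proof of Theorem~\ref{thm:upper} gives the strict inequality $1/B(0)<4-\frac{529}{576}\frac{(\log2)^2}{d'^2}$.
\item Within case 2, the right-hand side is nondecreasing in $d'$, so it is at most $4-\frac{529}{576}\frac{(\log2)^2}{d^2}$.
\end{enumerate}
In every case the bound for $\Bee_d$ is strict.

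The only point needing care is that the inequality in Theorem~\ref{thm:upper} is genuinely strict for each individual admissible pair. I will recheck the chain
$V\le4-4\tanh^2(\log V/(2d))$, then $\tanh x>\tfrac{23}{24}x$, then $V<4-\frac{529}{576}\frac{(\log2)^2}{d^2}$.
The middle step is a strict inequality for $x>0$, and the case $V<2$ is strict trivially. With that confirmed, attainment converts the pointwise strict bound into a strict bound for the supremum, and no further obstacle remains.
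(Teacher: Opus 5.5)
Your proposal is correct and is essentially the paper's proof. It combines attainment from Theorem~\ref{thm:attain} (with degree exactly one when $d=1$), the pointwise strict bounds of Theorem~\ref{thm:sqrt75} and Theorem~\ref{thm:upper}, monotonicity of $4-\frac{529}{576}\frac{(\log2)^2}{k^2}$ in $k\le d$, and the trivial value $1$ when the extremal pair has degree zero. Your strictness check is also sound, and it covers $d=1$: there $x=\frac{\log 2}{2}$ is the endpoint of the range stated in the paper, and still $\tanh x=\frac13>\frac{23}{24}x$.
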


\begin{proof}
By Theorem~\ref{thm:attain} there is an admissible pair $(W,B)$ with $\deg B=1$ and $1/B(0)=\Bee_1$; Theorem~\ref{thm:sqrt75} gives $1/B(0)<\sqrt{7/5}$. Likewise $\Bee_d=1/B(0)$ for some admissible pair of degree $k\le d$, and Theorem~\ref{thm:upper} gives $1/B(0)<4-\frac{529}{576}\frac{(\log 2)^2}{k^2}\le4-\frac{529}{576}\frac{(\log 2)^2}{d^2}$ if $k\ge1$, while $1/B(0)=1$ if $k=0$.
\end{proof}

\begin{remark}
The above argument does \textbf{not} show that an extremal pair must be regular. Moreover for $n\ge2$ an extremal pair need not have degree exactly $n$, as a zero may escape to $\T$ in the limiting argument of the above proof.
\end{remark}

\section{Concluding remarks}\label{sec:cr}

The identities $\M=\mathcal{A}=\Bee$ place Smale's mean value conjecture in univalent
function theory. The constant $\M$ is the supremum of $1/B(0)$ over
those $W\in\cS$ for which $WB$ is locally univalent for some finite
Blaschke product $B$. However, finding the exact value of $\Bee$ is not a problem about the whole of $\cS$ because a starlike map $W$ admits no nonconstant factor $B$. Indeed, if $W$ is starlike then $\Rea g>0$ in $\D$. If $B$ is a nonconstant Blaschke product with $k\ge1$ distinct zeros, then $zB'/B\to S>0$ uniformly as $|z|\to1$, so for $r<1$ close to $1$ we have $\Rea\Phi=\Rea g+\Rea(zB'/B)>0$ on $|z|=r$ and hence $\wind_0(\Phi|_{r\T})=0$. On the other hand, the argument principle gives $\wind_0(\Phi|_{r\T})=0-k<0$ if $\Phi$ were zero-free. For $W$ univalent on a neighbourhood of $\cl\D$, Theorem~\ref{thm:char} says much more: $W$ carries a nonconstant $B$ only if $g(\T)$ meets $(-\infty,0)$, and it does carry one if $g(\T)$ crosses $(-\infty,0)$ transversally. Therefore, it is not clear if $\Bee =4$. On the other hand, for the restricted
constants $\Bee_d$, Theorem~\ref{thm:nonempty} shows that every degree
occurs, and Theorem~\ref{thm:attain} shows that each supremum $\Bee_d$
is attained. Moreover, Theorem \ref{thm:upper} and Corollary~\ref{cor:strict} give $\Bee_d < 4-O(1/d^2)$ and we also have
$$
1<\Bee_1\le\Bee_2\le\cdots\le\Bee=\M\le4,
$$
with the degree-one window
$$
\sqrt{\frac{8081}{6656}}
\le\Bee_1
< \sqrt{\frac{7}{5}}.
$$
Whether $\Bee_d$ is strictly increasing, whether $\Bee_1=\Bee$, and
whether $\M=4$, are open. The value $4$, if it is the supremum, can be
approached only with $\deg B_n\to\infty$. The lobe pairs of
Theorem~\ref{thm:nonempty} prove existence in every degree but give
$1/B(0)$ only slightly larger than $1$, so they do not separate
$\Bee_n$ from $\Bee_1$. Earlier degree-one pairs, built from a truncated
odd Koebe function and from a bent radial slit, gave only
$\sqrt{70837/67744}=1.02257\ldots$ and $\sqrt{61/58}=1.02553\ldots$.
Both are superseded by the quartic polynomial construction in section \ref{sec:quartic} and are omitted. For the same reason, we do not include the estimate $\cA>1.00114$. It would be interesting to see if one can get a larger lower bound of $\M$ through $\cA$.

The constant $\Bee$ records a supremum of $1/B(0)$ over pairs for which
$(WB)'$ does not vanish on $\D$. If one instead retains those pairs for
which $(WB)'$ does vanish somewhere in $\cl\D$, the natural quantity is
the largest ratio of $B$ at a critical point of the product,
$$
\Delta(W,B)
:=\max_{\substack{u\in\cl\D\\ (WB)'(u)=0}}
\frac{|B(u)|}{|B(0)|}.
$$
The maximum runs over pairs for which $W$ and $B$ extend holomorphically through $\cl\D$, so that it is taken on a compact set. Holomorphic extension through the closed disk is not the regularity condition in Definition~\ref{def:admissible}: that definition also requires $(WB)'\ne0$ on $\cl\D$, which is exactly the condition excluded here. The dual constant
$$
\Bee^\vee
:=\inf\bigl\{\Delta(W,B):(WB)'\text{ has a zero in }\cl\D\bigr\}
$$
asks how small that largest ratio can be made; write $\Bee^\vee_k$ for
the same infimum restricted to pairs with $\deg B\le k$. One motivation
is the dual mean value constants of
\cite{DubininSugawa,NgZhang,JN26}:
$$
D(P):=\max_{P'(c)=0}\Bigl|\frac{P(c)}{c}\Bigr|,
\qquad
\cD_d:=\inf_{P\in\cP_d}D(P),\qquad \cD:=\inf_{P\in\cP}D(P).
$$
The comparison $\frac{(d-1/2)^{1/d}}{d^2}\le\cD_d\le1/d$
(see \cite{JN26}) gives $\cD=0$. The upper bound is elementary: the
polynomial $P_d(z)=\bigl((1+z)^d-1\bigr)/d$ lies in $\cP_d$ and has
$D(P_d)=1/d$. In \cite{JNDual}, we show that $\Bee^{\vee}_1=\sqrt{5}-2$ and $\Bee^{\vee}_d=0$ for $d\ge 2$ and hence $\Bee^\vee=\cD=0$.

\appendix
\section{The Schur--Cohn polynomials}
The polynomials below are the primitive integer forms of the Schur data for the kernel of $H$, scaled by $65536$.
Each has positive constant term.
Sturm's theorem finds no root of $D_1$, $D_2$, $D_3$ or $L_1$ in $(-1,1)$.
\begin{align*}
D_1(c)
&=114165512497
-22817013760\,c^2
+91268055040\,c^4
-91268055040\,c^6\\
&=114165512497-22817013760\,c^2(1-2c^2)^2.
\end{align*}
One has $D_1(0)=114165512497$ and $D_1(\pm1)=91348498737$.
\begin{align*}
D_2(c)
&=9063713835607204322145
+40354798711500779814912\,c^2\\
&\quad
-137255167184223106760704\,c^4
+134505322446801191567360\,c^6\\
&\quad
-36580972445627842560000\,c^8
-16659715741568938803200\,c^{10}\\
&\quad
+8329857870784469401600\,c^{12}.
\end{align*}
One has $D_2(0)=9063713835607204322145$ and
$$
D_2(\pm1)=1757837493273756982113.
$$

The polynomial $D_3$ is even of degree $24$.
Its coefficients of $c^{2k}$, for $k=0,\ldots,12$, are
\begin{align*}
A_0&=82150908493777459655963563840981568937401025,\\
A_1&=-781613022560571415504824817236223570316293120,\\
A_2&=3359768867334789754216043264686014645974073344,\\
A_3&=-8762583793491993307205788346457380702461624320,\\
A_4&=15902598393884913304241897372883275148664242176,\\
A_5&=-22036434677925167143977401224218413247624118272,\\
A_6&=24750570505604861462796289357056323903473844224,\\
A_7&=-22753585497242017840835179977281280929525923840,\\
A_8&=16465293459460144484786767659174979609684869120,\\
A_9&=-8793161256645340595516850795954044879010529280,\\
A_{10}&=3198169252593861067905186904351732202156851200,\\
A_{11}&=-700543296885997489235622760559064884707328000,\\
A_{12}&=69386532147469974137999293426046662082560000.
\end{align*}
Thus $D_3(c)=\sum_{k=0}^{12}A_k c^{2k}$.
The values $D_3(0)=A_0$ and
$$
D_3(\pm1)=16374768729715464479493712945527328024257
$$
are positive, and the Sturm count on $(-1,1)$ is zero.

The squared modulus of the leading coefficient of the first transform is a positive rational multiple of
\begin{align*}
L_1(c)
&=60578161744620649
-695261090860348232\,c^2
+2420274533960558224\,c^4\\
&\quad
-2433923033732382720\,c^6
+748836658505318400\,c^8.
\end{align*}
One has $L_1(0)=60578161744620649$ and $L_1(\pm1)=100505229617766321$, both positive, and $L_1$ has no root in $(-1,1)$. 

The squared modulus of the next leading coefficient is a positive rational multiple of $c^2 M(c)$, where
\begin{align*}
M(c)
&=413083172164177336605103112162967027712000\,c^{20}\\
&\quad-3447316139158502872422850731861552712908800\,c^{18}\\
&\quad+11965662987622824100548522121995938923806720\,c^{16}\\
&\quad-21382225448365818984329612061175363749806080\,c^{14}\\
&\quad+17732888791847426272417022333948048609116160\,c^{12}\\
&\quad+2138052913931431925926901100320534009217024\,c^{10}\\
&\quad-17715917688319063451495510502968611613704192\,c^{8}\\
&\quad+12821419940103179304908612472951512414289920\,c^{6}\\
&\quad+120126962767902603097970472150086924107776\,c^{4}\\
&\quad-4120451371017080615122578837835904128010240\,c^{2}\\
&\quad+1477677458388396694474675376462469384142525
\end{align*}
and
\begin{align*}
M(0)
&=1477677458388396694474675376462469384142525,\\
M(\pm1)
&=3001579964872314608254856150125087962813
\end{align*}
are positive.
The polynomial $c^2 M(c)$ has exactly one distinct root in $(-1,1)$, namely $c=0$, so $M$ has none.


\begin{thebibliography}{99}

\bibitem{Lean}
T.~Adamczewski (repository),
\emph{Lean 4 disproof of Smale's mean value conjecture ($K=1$), found by
GPT-6 Astra},
\url{https://github.com/tadamcz/mean-value-problem}, 2026.

\bibitem{BMN}
A.~F. Beardon, D.~Minda, T.~W. Ng,
Smale's mean value conjecture and the hyperbolic metric,
\emph{Math.\ Ann.} \textbf{322} (2002), 623--632.

\bibitem{cheung2011}
P.~Cheung, ``Smale's inequalities for polynomials and mean value conjecture,'' M.Phil. thesis, The University of Hong Kong, 2011. \url{https://hub.hku.hk/handle/10722/144814}.


\bibitem{choi}
Y.~Choi, ``Residual Julia sets of Newton's maps and Smale's problems on the efficiency of Newton's method,'' M.Phil. Thesis, The University of Hong Kong, 2006. (\url{https://hub.hku.hk/handle/10722/51861})

\bibitem{Crane1}
E.~Crane, Extremal polynomials in Smale’s mean value conjecture, \emph{Computational Methods and Function Theory} \textbf{6} (1) (2006): 145--163.

\bibitem{Crane}
E.~Crane,
A bound for Smale's mean value conjecture for complex polynomials,
\emph{Bull.\ London Math.\ Soc.} \textbf{39} (2007), 781--791.

\bibitem{Dieudonne}
J.~Dieudonn\'e,
Recherches sur quelques probl\`emes relatifs aux polyn\^omes et aux fonctions born\'ees d'une variable complexe,
\emph{Ann.\ Sci.\ \'Ecole Norm.\ Sup.} (3) \textbf{48} (1931), 247--358.

\bibitem{Dubinin2006}
V.~N. Dubinin,
Inequalities for critical values of polynomials,
\emph{Sb.\ Math.} \textbf{197} (2006), no.~8, 1167--1176;
translated from \emph{Mat.\ Sb.} \textbf{197} (2006), no.~8, 63--72.

\bibitem{Duren}
P.~L. Duren, \emph{Univalent Functions},
Grundlehren der math.\ Wissenschaften 259, Springer, New York, 1983.

\bibitem{DubininSugawa}
V.~N. Dubinin, T.~Sugawa,
Dual mean value problem for complex polynomials,
\emph{Proc.\ Japan Acad.\ Ser.\ A Math.\ Sci.} \textbf{85} (2009), 135--138.

\bibitem{FLW}
S. Fan, L. Liao, \& Y. Wang, . A dynamical approach to Smale's mean value conjecture. \emph{Journal of Complexity}, {\bf 98} (2027), 102094.


\bibitem{Grunsky1932}
H.~Grunsky,
Neue Absch\"atzungen zur konformen Abbildung ein- und mehrfach
zusammenh\"angender Bereiche,
\emph{Schriften Math.\ Sem.\ Inst.\ Angew.\ Math.\ Univ.\ Berlin}
\textbf{1} (1932), 95--140.

\bibitem{Hilbert}
D.~Hilbert,
\"Uber die Entwickelung einer beliebigen analytischen Function einer
Variabeln in eine unendliche nach ganzen rationalen Functionen
fortschreitende Reihe,
\emph{G\"ottinger Nachrichten} (1897), 63--70.

\bibitem{JN26}
A.~Jatar, T.~W. Ng,
Smale's mean value conjecture and its dual conjecture for complex
polynomials, arXiv:2608.27047 (2026).
\bibitem{JNDual}
A.~Jatar, T.~W. Ng,
Dual mean value constant and Blaschke-critical univalent functions.

\bibitem{Loewner1923}
K.~L\"owner,
Untersuchungen \"uber schlichte konforme Abbildungen des Einheitskreises. I,
\emph{Math.\ Ann.} \textbf{89} (1923), 103--121.

\bibitem{MaMinda}
W.~Ma, D.~Minda,
Two-point distortion theorems for bounded univalent functions,
\emph{Ann.\ Acad.\ Sci.\ Fenn.\ Math.} \textbf{22} (1997), 425--444.


\bibitem{NgZhang}
T.~W. Ng, Y.~Zhang,
Smale's mean value conjecture for finite Blaschke products,
\emph{J.\ Anal.} \textbf{24} (2016), 331--345.

\bibitem{NT}
B. Nagy, V. Totik, Sharpening of Hilbert’s lemniscate theorem. \emph{Journal d'Analyse Mathématique}, \textbf{96} (1)(2005), 191--223.


\bibitem{Pommerenke}
Ch.~Pommerenke, \emph{Boundary Behaviour of Conformal Maps},
Grundlehren der math.\ Wissenschaften 299, Springer, Berlin, 1992.

\bibitem{RS}
Q.~I. Rahman, G.~Schmeisser, \emph{Analytic Theory of Polynomials},
London Math.\ Soc.\ Monographs 26, Oxford Univ.\ Press, 2002.

\bibitem{Re} R. Remmert, {\it Classical Topics in Complex Function Theory}.
Springer-Verlag, New York, 1998.

\bibitem{Rudin}
W.~Rudin, \emph{Real and Complex Analysis}, 3rd ed.,
McGraw-Hill, New York, 1987.

\bibitem{Sheil-Small}
T. Sheil-Small, \emph{Complex polynomials}. Cambridge University Press, 2002.

\bibitem{Smale}
S.~Smale,
The fundamental theorem of algebra and complexity theory,
\emph{Bull.\ Amer.\ Math.\ Soc.} \textbf{4} (1981), 1--36.

\bibitem{Smale2000}
S.~Smale, Mathematical problems for the next century, \emph{Mathematics: Frontiers and Perspectives: Frontiers and Perspectives}. page 271--294, American Mathematical Soc., 2000.








\end{thebibliography}
\end{document}